\newtheorem{theorem}{Theorem}[section]
\newtheorem{cor}[theorem]{Corollary}
\newtheorem{defn}[theorem]{Definition}
\newtheorem{ex}[theorem]{Example}
\newtheorem{lem}[theorem]{Lemma}
\newtheorem{proposition}[theorem]{Proposition}
\newtheorem{rem}[theorem]{Remark}
\newcommand{\leqnomode}{\tagsleft@true\let\veqno\@@leqno}
\newcommand{\reqnomode}{\tagsleft@false\let\veqno\@@eqno}
\title{Risk-Averse Models in Bilevel Stochastic Linear Programming}
\author{J. Burtscheidt \footnotemark[1]\
\and M. Claus \footnotemark[1]\
\and S. Dempe \footnotemark[2]}
\begin{document}

\maketitle

\renewcommand{\thefootnote}{\fnsymbol{footnote}}
\footnotetext[1]{Faculty of Mathematics, University of Duisburg-Essen, Campus Essen, Thea-Leymann-Str. 9,
D-45127 Essen, Germany, [johanna.burtscheidt][matthias.claus]@uni-due.de}
\footnotetext[2]{Faculty of Mathematics and Computer Science, TU Bergakademie Freiberg, Akademiestra\ss e 6, D-09599 Freiberg, Germany, dempe@tu-freiberg.de}
\renewcommand{\thefootnote}{\arabic{footnote}}

\begin{abstract}
We consider bilevel linear problems, where some parameters are stochastic, and the leader has to decide in a here-and-now fashion, while the follower has complete information. In this setting, the leader's outcome can be modeled by a random variable, which we evaluate based on some law-invariant convex risk measure. A qualitative stability result under perturbations of the underlying probability distribution is presented. Moreover, for the expectation, the expected excess, and the upper semideviation, we establish Lipschitz continuity as well as sufficient conditions for differentiability. Finally, for finite discrete distributions, we reformulate the bilevel stochastic problems as standard bilevel problems and propose a regularization scheme for bilevel linear problems.

\medskip

\textit{Keywords:} Bilevel Stochastic Programming, Risk Measures, Differentiability, Stability, Finite Discrete Models

\medskip

\textit{AMS Subject Classification:} 90C15, 90C26, 90C31, 90C34, 91A65

\end{abstract}

\section{Introduction} \label{SecIntroduction}

Bilevel problems arise from the interplay of two decision makers at different levels of a hierarchy. The \emph{leader} decides first and passes the \emph{upper level decision} on to the \emph{follower}. Incorporating the leader's decision as a parameter, the follower then solves the \emph{lower level problem} reflecting his or her own goals and returns an optimal solution back to the leader. The leader's outcome depends on both his or her decision and the solution that is returned from the lower level. In bilevel optimization, it is assumed that the leader has full information about the influence of his or her decision on the lower level problem. As the latter may have more than one solution, models typically consider the case where the follower returns either the best (\emph{optimistic model}) or the worst (\emph{pessimistic model}) solution with respect to the leader's objective. The bilevel optimization problem is to find an optimal upper level decision which, even in a linear setting, results in a nonconvex, nondifferentiable and NP-hard problem (cf. \cite[Chapter 3]{Dempe2002}).

\medskip

The present work is on bilevel stochastic linear problems, where the realization of some random vector whose distribution does not depend on the upper level decision enters the lower level problem as an additional parameter. It is assumed that the leader has to make his or her decision without knowing the realization of the randomness, while the follower decides under full information. This setting encapsulates two-stage stochastic programming with linear recourse as the special case, where the upper and lower level objective functions coincide.

\medskip

In classical two-stage stochastic programming, the upper level objective function gives rise to a family of random variables defined by the optimal value function of the recourse problem. In contrast, the arising random variables in optimistic bilevel stochastic programming models depend on the optimal value of a problem where only optimal solutions of the lower level problem are feasible and the decision is made by a different actor. This is a crucial difference that entails a loss of convexity  and poses additional challenges.

\medskip

Nevertheless, bilevel stochastic problems are of great relevance for practical applications and have been discussed in the context of pricing of electricity swing options (\cite{KovacevicPflug2013}), economics (\cite{CarrionArroyoConejo2009}), supply chain planning (\cite{RoghanianSadjadiAryanezhad2007}), telecommunications (\cite{Werner2005}) and general agency problems (\cite{GaivoronskiWerner2012}). Other works focus on solution methods (\cite{BirbilGuerkanListes2004}), bilevel stochastic problems with Knapsack constraints (\cite{KosuchLeBodicLeungLisser2012}) and SMPECs (\cite{LinFukushima2010}).

\medskip

In \cite{Ivanov2014}, Ivanov examines bilevel stochastic linear problems with uncertainty in the right-hand side of the lower level problem and utilizes the Value-at-Risk to rank the arising random variables. The results include continuity of the objective function, the existence of a solution, and equivalence to a mixed-integer linear program, if the underlying distribution is finite discrete. The latter result has been extended to the fully random case in \cite{DempeIvanovNaumov2017}.

\medskip

In the present work, we rank the random variables arising from right-hand side uncertainty in the lower level by law-invariant risk measures. In particular, we consider the expectation, the expected excess over a fixed target level, the mean upper semideviation and the Conditional Value-at-Risk and establish Lipschitz continuity of the resulting objective function.

\medskip

It is well known that stochastic programming models may be smoother than their underlying deterministic counterparts. For instance, for a class of stochastic Stackelberg games employing the expectation, differentiability has been derived in \cite{DeMiguelXu2009}. Overcoming additional challenges arising from nondifferentiable integrands, we establish continuous differentiability for bilevel stochastic linear problems using the expectation, the expected excess or the mean upper semideviation.

\medskip

Incomplete information or the need for computational efficiency may lead to optimization models where an approximation of the true underlying distribution is employed. This motivates the analysis of the behavior of optimal values and (local) optimal solution sets under perturbations of the underlying distribution (see e.g. \cite{LiuXuLin2011}, \cite{Patriksson2008} and \cite{PatrikssonWynter1999} for stability analysis of related models). For bilevel stochastic linear problems, we establish a qualitative stability result that holds for all law-invariant convex risk measures.

\medskip

All our results regarding finiteness, (Lipschitz) continuity, differentiability and stability cover both the optimistic and the pessimistic approach of bilevel stochastic linear programming.

\medskip

For finite discrete distributions and optimistic models, we show that the risk-averse bilevel stochastic linear problems using the expectation, the expected excess or the mean upper semideviation are equivalent to standard bilevel linear problems. The resulting problems for the expectation and expected excess have at most one coupling constraint involving variables from different scenarios, which paves the way for decomposition approaches.

\medskip

Finally, we show that a simplified version of the regularization scheme in \cite{Scholtes2001} can be used to solve bilevel linear problems.

\section{Model} \label{SecModel}

Using the optimistic model, we shall consider parametric bilevel linear problems of the form
$$
\min_x \left\{c^\top x + \min_y \{q^\top y \; | \; y \in \Psi(x,z)\} \; | \; x \in X \right\},
$$
where $X \subseteq \mathbb{R}^n$ is nonempty, $c \in \mathbb{R}^n$ and $q \in \mathbb{R}^m$ are vectors, and $\Psi: \mathbb{R}^n \times \mathbb{R}^s \rightrightarrows \mathbb{R}^m$ is the lower level optimal solution set mapping defined by
\begin{equation*}
	\Psi(x,z) := \underset{y}{\mathrm{Argmin}} \; \{d^\top y \; | \; Ay \leq Tx + z\}
\end{equation*}
with matrices $A \in \mathbb{R}^{s \times m}$, $T \in \mathbb{R}^{s \times n}$ and a vector $d \in \mathbb{R}^m$. A bilevel stochastic program arises if we assume that the parameter $z = Z(\omega)$ is the realization of a known random vector $Z$ defined on some probability space $(\Omega, \mathcal{F}, \mathbb{P})$. We impose an additional non-anticipativity constraint that creates the following pattern of decision and observation:
\medskip
\begin{center}
	Leader decides $x$ \hspace{15pt} $\rightarrow$ \hspace{15pt} $z = Z(\omega)$ is revealed \hspace{15pt} $\rightarrow$ \hspace{15pt} Follower decides $y$.
\end{center}
\medskip
Throughout the analysis, we assume the stochasticity to be purely exogenous, i.e. the distribution of $Z$ to be independent of $x$. In this setting, the leader's decision $x$ gives rise to the random variable
\begin{equation*}
	f(x,Z(\cdot)) := c^\top x + \min_y \{q^\top y \; | \; y \in \Psi(x,Z(\cdot))\}
\end{equation*}
and the problem can be understood as picking an optimal random variable from the family $f(X,Z) := \{f(x, Z(\cdot)) \; | \; x \in X\} \subseteq L^0(\Omega,\mathcal{F},\mathbb{P})$. We shall rank these random variables according to some mapping $\mathcal{R}: L^0(\Omega,\mathcal{F},\mathbb{P}) \to \mathbb{R} \cup \{\pm\infty\} =: \overline{\mathbb{R}}$, i.e. consider the bilevel stochastic problem
\begin{equation}
\label{ELB}
	\min_x \left\{\mathcal{R}[f(x,Z(\cdot))] \; | \; x \in X \right\}.
\end{equation}
We shall assume that there is some $p \in [1,\infty)$ such that the restriction $\mathcal{R}|_{L^p(\Omega, \mathcal{F},\mathbb{P})}$ is real-valued, convex, nondecreasing w.r.t. the $\mathbb{P}$-almost sure partial order. Furthermore, let $\mathcal{R}$ be law-invariant, i.e. $\mathcal{R}[Y] = \mathcal{R}[Y']$ whenever the induced Borel measures $\mathbb{P} \circ Y$ and $\mathbb{P} \circ Y'$ coincide.

\medskip

\begin{rem}
The above assumptions are fulfilled for any law-invariant convex risk measure in the sense of \cite{FoellmerSchied2002, FrittelliGianin2002} (see also \cite{FoellmerSchied2011, FrittelliGianin2005}). However, we do not assume translation equivariance for the present analysis.
\end{rem}

\medskip

\begin{ex} \label{ExConvexRM}

\medskip

\begin{enumerate}
	\item The expectation $\mathbb{E}[\cdot]$,
	
	\medskip
	
	\item the expected excess $\mathrm{EE}_\eta[\cdot] =  \mathbb{E}[\max \lbrace \cdot- \eta, 0\}]$ over a fixed target level $\eta \in \mathbb{R}$,
	
	\medskip

	\item any weighted sum $\mathrm{SD}_\rho[\cdot] = \mathbb{E}[\cdot] + \rho \mathrm{EE}_{E[\cdot]}[\cdot]$ of the expectation and the upper semideviation with $\rho \in [0,1)$ and

	\medskip	
	
	\item the Conditional Value at Risk
	$$
	\mathrm{CVaR}_{\alpha}[\cdot] = \min_{\eta \in \mathbb{R}} \left\{ \eta \; + \; \frac{1}{1 - \alpha} \mathrm{EE}_\eta[\cdot] \right\}
	$$
	for a fixed level $\alpha \in (0,1)$ (cf. \cite[Theorem 10]{RockafellarUryasev2002})
\end{enumerate}

\medskip

are law-invariant and fulfill the above assumptions (see e.g. \cite{DentchevaRuszczynskiShapiro2014}, \cite{Pflug2000}). In all of the above situations $p$ can be chosen as $1$.
\end{ex}

\section{Structural properties} \label{SecStructure}

In this section, we shall consider the case where $\mathcal{R}$ is given by the $\mathbb{E}$, $\mathrm{EE}_\eta$ or $\mathrm{SD}_\rho$ and examine properties of the mapping $Q_\mathcal{R}: \mathbb{R}^{n} \to \overline{\mathbb{R}}$ given by
$$
Q_\mathcal{R}(x) := \mathcal{R}[f(x,Z(\cdot)].
$$
First, we shall prove that the function $f$ defined above is Lipschitz continuous and hence Borel measurable.

\medskip

\begin{lem} \label{LemmaF}
Assume that $\mathrm{dom} \; f \neq \emptyset$, then $f$ is real-valued and Lipschitz continuous on the polyhedron
$F = \{(x,z) \in \mathbb{R}^n \times \mathbb{R}^s \; | \; \exists y \in \mathbb{R}^m: Ay \leq Tx + z\}$.
\end{lem}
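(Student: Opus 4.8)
The plan is to peel the bilevel structure into two nested parametric linear programs and to exploit that, by linear programming duality, their dual feasible sets do not depend on the parameter $(x,z)$. Introduce the lower level value function $\varphi(x,z) := \min_y\{d^\top y \mid Ay \le Tx+z\}$ and note that $\Psi(x,z) = \{y \mid Ay \le Tx+z,\ d^\top y \le \varphi(x,z)\}$, since any feasible $y$ with $d^\top y \le \varphi(x,z)$ must attain the lower level optimum. Consequently $f(x,z) = c^\top x + \psi(x,z)$ with $\psi(x,z) := \min_y\{q^\top y \mid Ay \le Tx+z,\ d^\top y \le \varphi(x,z)\}$, and it suffices to treat $\varphi$ and $\psi$.

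First I would settle real-valuedness. Dualizing the lower level LP gives $\varphi(x,z) = \max_u\{-(Tx+z)^\top u \mid A^\top u = -d,\ u \ge 0\}$, whose feasible set $U := \{u \ge 0 \mid A^\top u = -d\}$ is independent of $(x,z)$; dualizing the LP defining $\psi$ gives the feasible set $W := \{(u,\lambda) \ge 0 \mid A^\top u + \lambda d = -q\}$, again independent of $(x,z)$. Choosing any $(x_0,z_0) \in \mathrm{dom}\, f$ forces $\Psi(x_0,z_0) \neq \emptyset$ and $\psi(x_0,z_0) \in \mathbb{R}$, hence $U \neq \emptyset$ and $W \neq \emptyset$. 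Since membership in $F$ means the lower level LP is feasible, for every $(x,z) \in F$ the lower level primal is feasible with nonempty dual, so $\varphi(x,z)$ is finite and $\Psi(x,z) \neq \emptyset$; then the $\psi$-LP is feasible with nonempty dual $W$, so $\psi(x,z)$, and therefore $f(x,z)$, is finite on all of $F$.

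For Lipschitz continuity I would pass to vertex representations. Both $U$ and $W$ lie in a nonnegative orthant, hence are line-free, so a linear objective bounded above on them attains its maximum at a vertex, and that value equals the maximum over the finitely many vertices. By the finiteness just established, strong duality yields, for every $(x,z) \in F$, the identities $\varphi(x,z) = \max_{v \in \mathrm{vert}(U)}\{-(Tx+z)^\top v\}$ and $\psi(x,z) = \max_{(u,\lambda) \in \mathrm{vert}(W)}\{-(Tx+z)^\top u - \lambda\, \varphi(x,z)\}$. The first exhibits $\varphi$ as a finite maximum of affine functions, hence Lipschitz on $F$ with some constant $L_\varphi$. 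In the second, each term $-(Tx+z)^\top u$ is affine and, since the vertex value $\lambda \ge 0$ is now a fixed finite number, $-\lambda\,\varphi(x,z)$ is Lipschitz with constant $\lambda L_\varphi$; thus each function in the maximum is Lipschitz and $\psi$, a finite maximum of Lipschitz functions, is Lipschitz on $F$. Adding the affine term $c^\top x$ gives the claim.

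The step I expect to be the main obstacle is controlling the coupling term $\lambda\,\varphi(x,z)$ in the dual of the $\psi$-LP: over all of $W$ the multiplier $\lambda$ may be unbounded, which would wreck any uniform Lipschitz estimate. The resolution is that the reduction to the finite set $\mathrm{vert}(W)$ is legitimate precisely because, for each admissible $(x,z)$, the dual objective is bounded above (the LP value is finite), so the optimum is attained at a vertex where $\lambda$ takes one of finitely many fixed values. This is exactly what converts the a priori nonlinear, unbounded coupling into a finite maximum of genuinely Lipschitz functions; the remaining ingredients (the dual feasible sets being parameter-free and the projection $F$ being the common domain on which $\varphi$ is defined) are then routine.
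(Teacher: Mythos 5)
Your argument is correct, but it takes a genuinely different route from the paper. The paper first invokes a result of Eaves to get $\mathrm{dom}\,\Psi = F$ and solvability of the inner problem on all of $F$, and then derives the Lipschitz estimate from a quantitative stability theorem for the argmin mapping under right-hand-side perturbations (Theorem~\ref{TheoremKlatteRightHandSide}): given $(x,z),(x',z')\in F$, the Hausdorff--Lipschitz inclusion $\Psi(x',z')\subseteq\Psi(x,z)+\Lambda\|(x,z)-(x',z')\|\mathbb{B}$ is used to pick a point of $\Psi(x,z)$ close to a minimizer over $\Psi(x',z')$ and compare objective values in the style of Klatte--Kummer. You instead dualize twice: you rewrite $\Psi(x,z)$ through the lower level value function $\varphi$, observe that the dual feasible sets $U$ and $W$ of the two nested LPs do not depend on $(x,z)$, deduce real-valuedness on $F$ from joint primal and dual feasibility, and then represent $\varphi$ and $\psi$ as finite maxima over the vertices of $U$ and $W$, after which Lipschitz continuity is immediate; your identification of the coupling term $\lambda\,\varphi(x,z)$ as the delicate point, and its resolution via attainment at a vertex where $\lambda$ is one of finitely many fixed values, is exactly right. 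The trade-off is this: your argument is elementary and self-contained (no external stability theorem) and yields explicit Lipschitz constants in terms of the dual vertices, but it is tied to the linear structure of the lower level, whereas the paper's route through Theorem~\ref{TheoremKlatteRightHandSide} covers convex quadratic lower level problems as well --- which is precisely what the remark following the lemma exploits.
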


\medskip

\begin{proof}
By \cite{Eaves1971}, $\emptyset \neq \mathrm{dom} \; f \subseteq \mathrm{dom} \; \Psi$ implies $\mathrm{dom} \; \Psi = F$. Consequently, the linear program in the definition of $f(x,z)$ is solvable for any $(x,z) \in F$ by parametric linear programming theory (see \cite{Beer1977}). Consider any $(x,z), (x',z') \in F$. Without loss of generality, assume that $f(x,z) \geq f(x',z')$ and let $y' \in \Psi(x',z')$ be such that $f(x',z') = c^\top x' + q^\top y'$. Following \cite{KlatteKummer1984} we obtain
\begin{align*}
|f(x,z)-f(x',z')| \; &= \; f(x,z)- c^\top x' - q^\top y' \; \leq \; c^\top x + q^\top y - c^\top x' - q^\top y' \\
&\leq \; \|c\| \|x-x'\| + \|q\| \|y-y'\|
\end{align*}
for any $y \in \Psi(x,z)$. Let $\mathbb{B}$ denote the Euclidean unit ball, then Theorem \ref{TheoremKlatteRightHandSide} in the Appendix yields
\begin{equation*}
\Psi(x',z')\subseteq \Psi(x,z)+ \Lambda \|(x,z)-(x',z')\|\mathbb{B}
\end{equation*}
and hence $|f(x,z)-f(x',z')| \; \leq \; (\|c\| + \Lambda \|q\|) \|(x,z)-(x',z')\|$.
\end{proof}

\medskip

\begin{rem}
In view of Theorem \ref{TheoremKlatteRightHandSide} in the Appendix, the above result can be easily extended to the case of a convex quadratic lower level problem.
\end{rem}

\medskip

The next result follows directly from linear programming theory and provides verifiable conditions for $\mathrm{dom} \; f \neq \emptyset$:

\medskip

\begin{lem}
$\mathrm{dom} \; f \neq \emptyset$ holds if and only if there exists $(x,z) \in \mathbb{R}^n \times \mathbb{R}^s$ such that

\medskip

\begin{enumerate}
	\item $\{y \; | \;  Ay \leq Tx + z\}$ is nonempty,
	
	\smallskip
	
	\item there is some $u \in \mathbb{R}^s$ satisfying $A^\top u = d$ and $u \leq 0$, and
	
	\smallskip
	
	\item the function $y \mapsto q^\top y$ is bounded from below on $\Psi(x,z)$.
\end{enumerate}

\smallskip

Under these conditions,
$$
\min_{y} \lbrace q^\top y \; | \; y \in \Psi(x',z') \rbrace
$$
is attained for any $(x',z') \in F$.
\end{lem}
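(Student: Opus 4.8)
The plan is to reduce finiteness of $f(x,z)$ to two successive linear programming solvability questions and to recognize the three listed conditions as the standard feasibility and boundedness criteria for these programs. Since $f(x,z) = c^\top x + \min_y\{q^\top y \mid y \in \Psi(x,z)\}$ is finite precisely when $\Psi(x,z) \neq \emptyset$ and the upper-level objective $q^\top y$ is bounded below on $\Psi(x,z)$, the characterization splits into nonemptiness of the lower-level solution set and solvability of the linear program over that set.

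For the first part I would observe that $\Psi(x,z)$ is the optimal set of the lower-level program $\min_y\{d^\top y \mid Ay \leq Tx+z\}$, so $\Psi(x,z) \neq \emptyset$ holds if and only if this program is feasible and bounded below. Feasibility is Condition 1. For boundedness I would pass to the dual $\max_u\{(Tx+z)^\top u \mid A^\top u = d,\ u \leq 0\}$; by linear programming duality a feasible primal is bounded below exactly when the dual is feasible, and dual feasibility is precisely Condition 2. Thus Conditions 1 and 2 together are equivalent to $\Psi(x,z) \neq \emptyset$. For the second part, since $\Psi(x,z)$ is then a nonempty polyhedron, the fundamental theorem of linear programming states that $\min_y\{q^\top y \mid y \in \Psi(x,z)\}$ is attained and finite if and only if $q^\top y$ is bounded below on $\Psi(x,z)$, which is Condition 3. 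Combining the two parts yields both directions: any $(x,z)$ satisfying the three conditions lies in $\mathrm{dom}\, f$, and conversely a point in $\mathrm{dom}\, f$ must make the lower-level program solvable, forcing Conditions 1 and 2 via duality, and the upper-level program finite-valued, forcing Condition 3.

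The main obstacle, and the part requiring the sharpest argument, is the final assertion that the minimum is attained for every $(x',z') \in F$ once the three conditions hold at a single point. The key is that the relevant boundedness properties are independent of the right-hand side. Condition 2 involves only $A$ and $d$, so the lower-level program is bounded for every right-hand side; together with the defining feasibility of $F$ this gives $\Psi(x',z') \neq \emptyset$ for all $(x',z') \in F$. Moreover, writing $\Psi(x',z') = \{y \mid Ay \leq Tx'+z',\ d^\top y \leq v'\}$ with $v'$ the optimal value of the lower-level program, its recession cone equals $\{r \mid Ar \leq 0,\ d^\top r \leq 0\}$, which does not depend on $(x',z')$. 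Since $q^\top y$ is bounded below on a nonempty polyhedron if and only if $q^\top r \geq 0$ for every $r$ in its recession cone, Condition 3 at the original point propagates to every $(x',z') \in F$, and the fundamental theorem of linear programming again delivers attainment.
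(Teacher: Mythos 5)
Your proposal is correct. The paper gives no proof of this lemma at all, remarking only that it ``follows directly from linear programming theory''; your argument --- duality for the lower-level program (Condition 2 being exactly dual feasibility, hence independent of the right-hand side) combined with the recession-cone criterion $\{r \mid Ar \leq 0,\ d^\top r \leq 0\}$ to propagate boundedness of $q^\top y$ from one point of $F$ to all of $F$ --- is precisely the standard argument the authors are alluding to, and it also recovers the fact $\mathrm{dom}\,\Psi = F$ that the paper later cites from Eaves.
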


\medskip

Under an appropriate moment condition, Lemma \ref{LemmaF} implies finiteness and Lipschitz continuity of $Q_{\mathbb{E}}$, $Q_{\mathrm{EE}_\eta}$ and $Q_{\mathrm{SD}_\rho}$. Let
$$
\mathcal{M}^p_s := \left\{ \mu \in \mathcal{P}(\mathbb{R}^s) \; | \; \int_{\mathbb{R}^s} \|z\|^p~\mu(dz) < \infty \right\}
$$
denote the set of Borel probability measures on $\mathbb{R}^s$ with finite moments of order $p \in [0,\infty)$.

\medskip

\begin{theorem} \label{ThFiniteLipschitz}
Assume $\mathrm{dom} \; f \neq \emptyset$ and $\mu_Z := \mathbb{P} \circ Z^{-1} \in \mathcal{M}^1_s$. Then the mappings $Q_{\mathbb{E}}$, $Q_{\mathrm{EE}_\eta}$, $Q_{\mathrm{SD}_\rho}$ and $Q_{\mathrm{CVaR}_\alpha}$ are real-valued and Lipschitz continuous on
$$
F_Z = \lbrace x \in \mathbb{R}^n \; | \; (x,z) \in F \; \forall z \in \mathrm{supp} \; \mu_Z \rbrace
$$
for any $\eta \in \mathbb{R}$, $\rho \in [0,1)$ and $\alpha \in (0,1)$.
\end{theorem}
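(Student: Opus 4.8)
The plan is to split the statement into (i) real-valuedness, which amounts to showing $f(x,Z(\cdot)) \in L^1(\Omega,\mathcal{F},\mathbb{P})$ for every $x \in F_Z$, and (ii) Lipschitz continuity, which I would obtain by transferring the Lipschitz estimate for $f$ from Lemma \ref{LemmaF} through the risk measures. Write $L$ for the Lipschitz modulus of $f$ on $F$ provided by Lemma \ref{LemmaF}. Observe first that for $x \in F_Z$ one has $(x,z) \in F$ for every $z \in \mathrm{supp}\,\mu_Z$, and since $Z(\omega) \in \mathrm{supp}\,\mu_Z$ holds $\mathbb{P}$-almost surely, the map $\omega \mapsto f(x,Z(\omega))$ is $\mathbb{P}$-a.s. well defined and measurable, being a composition of the Borel map $f$ with $Z$.

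For (i), fix $x \in F_Z$ and any reference point $z_0 \in \mathrm{supp}\,\mu_Z$. Lemma \ref{LemmaF} gives $|f(x,z)| \le |f(x,z_0)| + L\|z - z_0\| \le |f(x,z_0)| + L\|z_0\| + L\|z\|$ for every $z \in \mathrm{supp}\,\mu_Z$, so that $\mathbb{E}[|f(x,Z)|] \le |f(x,z_0)| + L\|z_0\| + L\,\mathbb{E}[\|Z\|] < \infty$ because $\mu_Z \in \mathcal{M}^1_s$. Hence $f(x,Z) \in L^1(\Omega,\mathcal{F},\mathbb{P})$, and since for each of the four risk measures the restriction to $L^1$ is real-valued (the choice $p=1$ in Example \ref{ExConvexRM}), it follows that $Q_\mathcal{R}(x) \in \mathbb{R}$.

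For (ii), take $x, x' \in F_Z$. For every $z \in \mathrm{supp}\,\mu_Z$ both $(x,z)$ and $(x',z)$ lie in $F$, so Lemma \ref{LemmaF} yields $|f(x,z) - f(x',z)| \le L\|x-x'\|$; integrating the resulting a.s. bound gives $\|f(x,Z) - f(x',Z)\|_{L^1} \le L\|x-x'\|$. It then suffices to show that $\mathbb{E}$, $\mathrm{EE}_\eta$, $\mathrm{SD}_\rho$ and $\mathrm{CVaR}_\alpha$ are each Lipschitz continuous on $L^1(\Omega,\mathcal{F},\mathbb{P})$ with respect to $\|\cdot\|_{L^1}$, with some modulus $K_\mathcal{R}$, since then $|Q_\mathcal{R}(x) - Q_\mathcal{R}(x')| \le K_\mathcal{R}\,\|f(x,Z) - f(x',Z)\|_{L^1} \le K_\mathcal{R} L\,\|x-x'\|$. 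For $\mathbb{E}$ this is immediate, and for $\mathrm{EE}_\eta$ it follows from $|(a-\eta)^+ - (b-\eta)^+| \le |a-b|$. For $\mathrm{SD}_\rho$ I would write $\mathrm{SD}_\rho = \mathbb{E} + \rho\,\mathbb{E}[(\cdot - \mathbb{E}[\cdot])^+]$ and estimate the second term by $\mathbb{E}|(Y - \mathbb{E}Y) - (Y' - \mathbb{E}Y')| \le \mathbb{E}|Y-Y'| + |\mathbb{E}Y - \mathbb{E}Y'| \le 2\|Y-Y'\|_{L^1}$. For $\mathrm{CVaR}_\alpha$ I would use its representation as the infimum over $\eta$ of the family $\eta + \tfrac{1}{1-\alpha}\mathrm{EE}_\eta[\cdot]$ (finite and attained for $Y \in L^1$), each member of which is $\tfrac{1}{1-\alpha}$-Lipschitz by the $\mathrm{EE}_\eta$ bound, and invoke that a pointwise infimum of uniformly $K$-Lipschitz functions is $K$-Lipschitz.

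The routine part is the integrability estimate together with the cases $\mathbb{E}$ and $\mathrm{EE}_\eta$. The main obstacle is the Lipschitz continuity of the remaining two functionals: for $\mathrm{SD}_\rho$ the reference level is itself the data-dependent mean, so a direct pointwise comparison fails and one must absorb the shift $|\mathbb{E}Y - \mathbb{E}Y'|$; for $\mathrm{CVaR}_\alpha$ the inner minimization must be controlled uniformly in $\eta$. Once these uniform-in-$x$ Lipschitz moduli are in hand, the transfer through the a.s. bound on $f(x,Z) - f(x',Z)$ closes the argument.
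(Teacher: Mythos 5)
Your proof is correct, and for the first two functionals it coincides with the paper's argument: the integrability estimate via a reference point $z_0\in\mathrm{supp}\,\mu_Z$ and the bound $|Q_{\mathbb{E}}(x)-Q_{\mathbb{E}}(x')|\le L\|x-x'\|$ are exactly what the paper does, and the paper likewise handles $\mathrm{EE}_\eta$ by the $1$-Lipschitz continuity of $t\mapsto\max\{t-\eta,0\}$. Where you genuinely diverge is in the organization of the remaining two cases. The paper disposes of $\mathrm{SD}_\rho$ in one sentence (``follows from the corresponding results for $Q_{\mathbb{E}}$ and $Q_{\mathrm{EE}_\eta}$''), which elides precisely the point you identify as the obstacle, namely that the target level $Q_{\mathbb{E}}(x)$ moves with $x$; your explicit absorption of the shift, giving the modulus $(1+2\rho)L$, fills in what the paper leaves implicit. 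For $\mathrm{CVaR}_\alpha$ the paper also factors through the map $g(x)=f(x,Z(\cdot))$ into $L^1$, but then outsources the $L^1$-Lipschitz continuity of $\mathrm{CVaR}_\alpha$ to an external result (Pichler, \emph{A quantitative comparison of risk measures}, Corollary 3.7); you instead derive it elementarily from the Rockafellar--Uryasev infimum representation and the fact that a finite pointwise infimum of uniformly $\tfrac{1}{1-\alpha}$-Lipschitz functionals is $\tfrac{1}{1-\alpha}$-Lipschitz. Your route is more uniform (all four cases reduce to $L^1$-Lipschitz continuity of the risk functional composed with the Lipschitz map $x\mapsto f(x,Z(\cdot))$), self-contained, and yields explicit moduli; the paper's citation-based route for CVaR is shorter and generalizes more readily to other risk measures covered by Pichler's comparison theorem. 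No gaps.
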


\medskip

\begin{proof}
$Q_\mathbb{E}$: Let $L$ be the Lipschitz constant from Lemma \ref{LemmaF}. For any  $z_0 \in \mathrm{supp} \; \mu_Z$ and $x \in F_Z$ we have
\begin{align*}
|Q_\mathbb{E}(x)| \; &= \; \left| \int_{\mathrm{supp} \; \mu_Z} f(x,z)~\mu_Z(dz) \right| \\
&\leq \; |f(x,z_0)| + \int_{\mathrm{supp} \; \mu_Z} |f(x,z)-f(x,z_0)|~\mu_Z(dz) \\
&\leq \; |f(x,z_0)| + L\|z_0\| + \int_{\mathrm{supp} \; \mu_Z} L\|z\|~\mu_Z(dz) < \infty.
\end{align*}
Furthermore,
\begin{align*}
|Q_\mathbb{E}(x) -Q_\mathbb{E}(x')| \; \leq \; \int_{\mathrm{supp} \; \mu} |f(x,z)-f(x',z)|~\mu(dz) \; \leq \; L \|x-x'\|
\end{align*}
holds for any $x,x' \in F_Z$.

\medskip

$Q_{\mathrm{EE}_\eta}$: Invoking $\max \lbrace f(x,z) - \eta, 0 \rbrace \leq |f(x,z)| + |\eta|$ and the Lipschitz continuity of $x \mapsto \max \lbrace f(x,z) - \eta, 0 \rbrace$ on $F_Z$, finiteness and Lipschitz continuity of $Q_{\mathrm{EE}_\eta}$ can be shown by the same arguments as for $Q_\mathbb{E}$.

\medskip

$Q_{\mathrm{SD}_\rho}$: Finiteness and Lipschitz continuity follow from the corresponding results for $Q_\mathbb{E}$ and $Q_{\mathrm{EE}_\eta}$.

\medskip

$Q_{\mathrm{CVaR}_\alpha}$: Consider the mapping $g: \mathbb{R}^n \to L^0(\Omega, \mathcal{F}, \mathbb{P})$, $g(x) := f(x,Z(\cdot))$. By the results for $Q_\mathbb{E}$, we have $g(F_Z) \subseteq L^1(\Omega, \mathcal{F}, \mathbb{P})$ and the restriction $g|_{F_Z}$ is Lipschitz continuous w.r.t. the $L^1$-norm. Consequently, the composition $Q_{\mathrm{CVaR}_\alpha} = \mathrm{CVaR}_\alpha \circ g$ is finite an Lipschitz continuous on $F_Z$ by \cite[Corollary 3.7 and the subsequent remark]{Pichler2017}.

\end{proof}

\medskip

Under the assumptions of Theorem \ref{ThFiniteLipschitz}, the bilevel stochastic linear problem is solvable whenever $X$ is a nonempty compact subset of $F_Z$. A similar result holds for a comprehensive class of risk measure and shall be discussed in Section \ref{SecStability} (cf. Corollary \ref{CorCont}).

\medskip

We shall now focus on differentiability of $Q_\mathcal{R}$. It will be convenient to reformulate $f$ as
$$
f(x,z) = c^\top x + \min_{y_+,y_-,t} \big\{ q^\top(y_{+} - y_{-}) \; | \; (y_+,y_-,t) \in \Psi_{=}(x,z) \big\},
$$
where
$$
\Psi_{=}(x,z) = \underset{y_+,y_-,t}{\mathrm{Argmin}} \lbrace d^\top(y_{+} - y_{-})\; | \; A(y_{+} - y_{-}) + t = Tx + z, \; y_+,y_-,t \geq 0 \rbrace.
$$
Setting
$$
\hat{q} := \begin{pmatrix} q \\ -q \\ 0_s \end{pmatrix}, \; \hat{y} := \begin{pmatrix} y_+ \\ y_- \\ t \end{pmatrix}, \; \hat{d} := \begin{pmatrix} d \\ -d \\ 0_s \end{pmatrix}, \; \text{and} \; \hat{A} := (A,-A,I_s)
$$
we obtain
$$
f(x,z) = c^\top x + \min_{\hat{y}} \big\{ \hat{q}^\top \hat{y} \; | \; \hat{y} \in \underset{y'}{\mathrm{Argmin}} \lbrace \hat{d}^\top y' \; | \; \hat{A}y' = Tx+z, \; y' \geq 0 \rbrace \big\}.
$$
As the rows of $\hat{A}$ are linearly independent, we may consider the nonempty set
$$
\mathcal{A} := \lbrace \hat{A}_B \in \mathbb{R}^{s \times s} \; | \; \hat{A}_B \; \text{is a regular submatrix of } \hat{A} \rbrace
$$
of lower level base matrices. A base matrix $\hat{A}_B \in \mathcal{A}$ is optimal for the lower level problem for a given $(x,z)$ if it is feasible, i.e. $\hat{A}_B^{-1}(Tx+z) \geq 0$, and the associated reduced cost vector $\hat{d}_N^\top - \hat{d}_B^\top  \hat{A}_B^{-1} \hat{A}_N$ is nonnegative. Furthermore, for any optimal base matrix $\hat{A}_{B'} \in \mathcal{A}$, there exists a feasible base matrix $\hat{A}_B \in \mathcal{A}$ satisfying
$$
\hat{A}_{B'}^{-1}(Tx+z) = \hat{A}_{B}^{-1}(Tx+z) \; \; \text{and} \; \; \hat{d}_N^\top - \hat{d}_B^\top  \hat{A}_B^{-1} \hat{A}_N \geq 0.
$$
Set
$$
\mathcal{A}^\ast := \lbrace \hat{A}_B \in \mathcal{A} \; | \; \hat{d}_N^\top - \hat{d}_B^\top  \hat{A}_B^{-1} \hat{A}_N \geq 0 \rbrace
$$
and assume $\mathrm{dom} \; f \neq \emptyset$, then
\begin{equation}
\label{fBasisRef}
f(x,z) = c^\top x + \min_{\hat{A}_B} \big\{ \hat{q}^\top_B \hat{A}_B^{-1}(Tx+z) \; | \; \hat{A}_B^{-1}(Tx+z) \geq 0, \; \hat{A}_B \in \mathcal{A}^\ast \big\}
\end{equation}
holds for any $(x,z) \in F$.

\medskip

\begin{defn}
The \emph{region of stability} associated with a base matrix $\hat{A}_{B} \in \mathcal{A}^\ast$ is the set
$$
\mathcal{R}(\hat{A}_B) := \lbrace (x,z) \in F \; | \; \hat{A}_B^{-1}(Tx + z) \geq 0, \; c^\top x + \hat{q}^\top_B \hat{A}_B^{-1}(Tx+z) = f(x,z) \rbrace.
$$
\end{defn}

\begin{lem} \label{LemmafDiff}
Assume $\mathrm{dom} \; f \neq \emptyset$ and let $x_0$ be an inner point of $F_Z$. Then $f(\cdot,z_0)$ is continuously differentiable at $x_0$ for any $z_0 \in \mathrm{supp} \; \mu_Z \setminus \mathcal{N}_{x_0}$, where
$$
\mathcal{N}_{x_0} = \mathcal{F}_{x_0} \; \cup \;  \bigcup_{\hat{A}_B \in \mathcal{A}^\ast} \mathcal{Z}_{x_0}(\hat{A}_B) \setminus \mathrm{int} \; \mathcal{Z}_{x_0}(\hat{A}_B) \; \cup \bigcup_{\substack{\hat{A}_B, \hat{A}_{B'} \in \mathcal{A}^\ast: \\ \hat{q}^\top_B \hat{A}_B^{-1} \; \neq \; \hat{q}^\top_{B'} \hat{A}_{B'}^{-1}}}  \mathcal{V}_{x_0}(\hat{A}_B, \hat{A}_{B'})
$$
with
\begin{align*}
\mathcal{F}_{x_0} &:= \lbrace z \in \mathbb{R}^s \; | \; (x_0,z) \in F \setminus \mathrm{int} \; F \rbrace, \\
\mathcal{Z}_{x_0}(\hat{A}_B) &:= \lbrace z \in \mathbb{R}^s \; | \; \hat{A}_B^{-1}(Tx_0 + z) \geq 0 \rbrace, \; \text{and} \\
\mathcal{V}_{x_0}(\hat{A}_B, \hat{A}_{B'}) &:=  \lbrace z \in \mathbb{R}^s \; | \; \hat{q}^\top_B (\hat{A}_B^{-1} - \hat{A}_{B'}^{-1})(z + Tx_0) = 0 \rbrace.
\end{align*}
Furthermore, $\mathcal{N}_{x_0}$ is contained in a finite union of affine hyperplanes in $\mathbb{R}^s$ and we have
$$
\nabla_x f(x_0,z_0) \in \lbrace c^\top + \hat{q}^\top_B \hat{A}_B^{-1} T \; | \; \hat{A}_B \in \mathcal{A}^\ast \rbrace.
$$
\end{lem}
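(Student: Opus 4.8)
The plan is to exploit the basis representation \eqref{fBasisRef}, which expresses $f(\cdot,z_0)$ as the pointwise minimum of the finitely many affine functions $\ell_B(x) := c^\top x + \hat q_B^\top \hat A_B^{-1}(Tx+z_0)$, $\hat A_B \in \mathcal A^\ast$, each restricted to the $x$-region where $\hat A_B$ is feasible. Fix $z_0 \in \mathrm{supp}\,\mu_Z \setminus \mathcal N_{x_0}$. Since $x_0 \in F_Z$ we have $(x_0,z_0) \in F$, and because $z_0 \notin \mathcal F_{x_0}$ this improves to $(x_0,z_0) \in \mathrm{int}\,F$; hence $(x,z_0) \in F$, the value $f(x,z_0)$ is finite by Lemma \ref{LemmaF}, and representation \eqref{fBasisRef} is valid for all $x$ in some neighbourhood of $x_0$.

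The first key step is to show that the set of \emph{feasible} base matrices is locally constant as $x$ varies. For each $\hat A_B \in \mathcal A^\ast$ the vector $\hat A_B^{-1}(Tx_0+z_0)$ is either strictly positive (i.e. $z_0 \in \mathrm{int}\,\mathcal Z_{x_0}(\hat A_B)$) or has a strictly negative component (i.e. $z_0 \notin \mathcal Z_{x_0}(\hat A_B)$), the borderline case $z_0 \in \mathcal Z_{x_0}(\hat A_B)\setminus \mathrm{int}\,\mathcal Z_{x_0}(\hat A_B)$ being excluded by $z_0 \notin \mathcal N_{x_0}$. By continuity of $x \mapsto \hat A_B^{-1}(Tx+z_0)$ each sign pattern persists near $x_0$, so there is a neighbourhood $W$ of $x_0$ on which $\mathcal A_0 := \{\hat A_B \in \mathcal A^\ast : z_0 \in \mathrm{int}\,\mathcal Z_{x_0}(\hat A_B)\}$ is exactly the collection of feasible base matrices; consequently $f(x,z_0) = \min_{\hat A_B \in \mathcal A_0}\ell_B(x)$ for $x \in W$.

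The second, and crucial, step is to rule out a genuine kink at $x_0$. Let $\mathcal A_0^{\mathrm{opt}} \subseteq \mathcal A_0$ consist of those $\hat A_B$ with $\ell_B(x_0) = f(x_0,z_0)$. If two of them, $\hat A_B$ and $\hat A_{B'}$, had $\hat q_B^\top \hat A_B^{-1} \neq \hat q_{B'}^\top \hat A_{B'}^{-1}$, then the equality of values $\ell_B(x_0) = \ell_{B'}(x_0)$ would force $z_0 \in \mathcal V_{x_0}(\hat A_B,\hat A_{B'})$, contradicting $z_0 \notin \mathcal N_{x_0}$. Hence all optimal pieces share a common value-slope $\hat q_B^\top \hat A_B^{-1}$, and therefore the common $x$-gradient $c^\top + \hat q_B^\top \hat A_B^{-1} T$; since two affine functions with equal gradient and equal value at $x_0$ coincide identically, all $\ell_B$ with $\hat A_B \in \mathcal A_0^{\mathrm{opt}}$ are one and the same affine function $\ell$. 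The finitely many remaining pieces satisfy $\ell_B(x_0) > f(x_0,z_0)$ and hence, by continuity, stay strictly above $\ell$ on a possibly smaller neighbourhood, on which $f(\cdot,z_0) = \ell$. This yields continuous differentiability at $x_0$ with $\nabla_x f(x_0,z_0) = c^\top + \hat q_B^\top \hat A_B^{-1} T \in \{c^\top + \hat q_B^\top \hat A_B^{-1} T : \hat A_B \in \mathcal A^\ast\}$.

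It remains to verify that $\mathcal N_{x_0}$ lies in a finite union of affine hyperplanes. Each $\mathcal Z_{x_0}(\hat A_B)$ is the image of the nonnegative orthant under the invertible affine map $z \mapsto \hat A_B^{-1}(Tx_0+z)$, so it has nonempty interior and its boundary is contained in the $s$ hyperplanes $\{z : (\hat A_B^{-1})_j(Tx_0+z)=0\}$; likewise each $\mathcal V_{x_0}(\hat A_B,\hat A_{B'})$ appearing in the union is a single hyperplane, its defining row vector being nonzero by the indexing condition. For $\mathcal F_{x_0}$ I would write $F = \{(x,z) : M_1 x + M_2 z \le b\}$, so that $\mathcal F_{x_0}$ is contained in the slices $\{z : (M_2)_i z = b_i - (M_1)_i x_0\}$ of the facet hyperplanes: indices with $(M_2)_i \neq 0$ give genuine hyperplanes in $\mathbb R^s$, whereas for $(M_2)_i = 0$ the assumption $x_0 \in \mathrm{int}\,F_Z$ forces the strict inequality $(M_1)_i x_0 < b_i$ (otherwise some $x$ violating this $z$-independent constraint would lie arbitrarily close to $x_0$ yet outside $F_Z$), so those slices are empty. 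Taking the finite union over base matrices, pairs and facets proves the claim. The main obstacle is the third step: correctly exploiting the exclusion of $\mathcal V_{x_0}$ to force every \emph{active} piece to carry an identical slope, so that the concave piecewise-affine function $f(\cdot,z_0)$ is genuinely smooth rather than merely one-sidedly differentiable at $x_0$.
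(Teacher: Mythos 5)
Your proof is correct and rests on the same ingredients as the paper's: the basis representation \eqref{fBasisRef} together with the three parts of $\mathcal{N}_{x_0}$ used exactly as intended, namely excluding degenerate feasibility of any basis (so the feasible basis set is locally constant in $x$) and excluding value ties between bases with different reduced objective rows (so all value-active affine pieces carry the same gradient and in fact coincide). The organization differs only slightly --- you argue directly with the pointwise minimum of affine functions over a locally constant index set, whereas the paper reaches the same conclusion by contradiction via interiors of the regions of stability $\mathcal{R}(\hat{A}_B)$ --- and your treatment of $\mathcal{F}_{x_0}$ through an irredundant inequality description of $F$ (which is legitimate since $F$ is a full-dimensional polyhedron whenever $\mathrm{dom}\,f \neq \emptyset$, as it contains $\lbrace (x,z) \; | \; Ay_0 \leq Tx+z \rbrace$ for any fixed $y_0$) is, if anything, cleaner than the paper's.
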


\begin{proof}
$x_0 \in \mathrm{int} \; F_Z$ and $z_0 \in \mathrm{supp} \; \mu \setminus \mathcal{N}_{x_0} \subseteq \mathrm{supp} \; \mu \setminus \mathcal{F}_{x_0}$ imply $(x_0,z_0) \in \mathrm{int} \; F$ by definition.
In view of \eqref{fBasisRef}, we have
$$
F \subseteq \bigcup_{\hat{A}_B \in \mathcal{A}^\ast} \mathcal{R}(\hat{A}_B).
$$
If $(x_0,z_0) \in \mathrm{int} \; \mathcal{R}(\hat{A}_B)$ holds for some $\hat{A}_B \in \mathcal{A}^\ast$, there is a neighborhood $U$ of $x_0$ such that $f(x,z_0) = c^\top x + \hat{q}^\top_B \hat{A}_B^{-1}(Tx+z_0)$ holds for all $x \in U$. In particular, $f(\cdot,z_0)$ is continuously differentiable at $x_0$ and $\nabla_x f(x_0,z_0) = c^\top + \hat{q}^\top_B \hat{A}_B^{-1} T$.

\medskip

Suppose that $(x_0,z_0) \notin \mathrm{int} \; \mathcal{R}(\hat{A}_B)$ for all $\hat{A}_B \in \mathcal{A}^\ast$. The continuity of $f$ implies that there are $k \geq 2$ pairwise different base matrices $\hat{A}_{B^1}, \ldots, \hat{A}_{B^k} \in \mathcal{A}^\ast$ such that
$$
(x_0,z_0) \in \bigcap_{i=1,\ldots,k} \mathcal{R}(\hat{A}_{B^i}) \; \cap \; \mathrm{int} \left( \bigcup_{i=1,\ldots,k} \mathcal{R}(\hat{A}_{B^i}) \right).
$$
In particular, we have
$$
\hat{q}^\top_{B^1} \hat{A}_{B^1}^{-1}(Tx_0+z_0) = \ldots = \hat{q}^\top_{B^k} \hat{A}_{B^k}^{-1}(Tx_0+z_0),
$$
i.e. $z_0 \in \mathcal{V}_{x_0}(\hat{A}_{B^i}, \hat{A}_{B^j})$ for all $i,j \in \lbrace 1, \ldots, k \rbrace$. Thus, $z_0 \in \mathrm{supp} \; \mu \setminus \mathcal{N}_{x_0}$ implies
\begin{equation}
\label{ProofLemfDiff}
\hat{q}^\top_{B^1} \hat{A}_{B^1}^{-1} = \ldots = \hat{q}^\top_{B^k} \hat{A}_{B^k}^{-1}.
\end{equation}
For any $i \in \lbrace 1, \ldots, k \rbrace$ we shall consider the sets
\begin{align*}
\mathcal{Z}(\hat{A}_{B^i}) &:= \lbrace (x,z) \in \mathbb{R}^n \times \mathbb{R}^s \; | \; \hat{A}_{B^i}^{-1}(Tx + z) \geq 0\rbrace \; \; \text{and} \\
\mathcal{O}(\hat{A}_{B^i}) &:= \lbrace (x,z) \in \mathbb{R}^n \times \mathbb{R}^s \; | \; c^\top x + \hat{q}^\top_{B^i} \hat{A}_{B^i}^{-1}(Tx+z) = f(x,z) \rbrace.
\end{align*}
By \eqref{ProofLemfDiff} we have
$$
\mathcal{R}(\hat{A}_{B^i}) = \mathcal{O}(\hat{A}_{B^i}) \cap \mathcal{Z}(\hat{A}_{B^i}) = \mathcal{O}(\hat{A}_{B^1}) \cap \mathcal{Z}(\hat{A}_{B^i})
$$
for all $i \in \lbrace 1, \ldots, k \rbrace$. Thus,
\begin{align*}
(x_0,z_0) &\in \mathrm{int} \bigcup_{i=1,\ldots,k} \mathcal{R}(\hat{A}_{B^i}) \; = \; \mathrm{int} \left( \mathcal{O}(\hat{A}_{B^1}) \; \cap \bigcup_{i=1,\ldots,k} \mathcal{Z}(\hat{A}_{B^i}) \right) \\
&= \mathrm{int} \; \mathcal{O}(\hat{A}_{B^1}) \; \cap \; \mathrm{int} \bigcup_{i=1,\ldots,k} \mathcal{Z}(\hat{A}_{B^i}).
\end{align*}
We have $(x_0,z_0) \in \mathcal{Z}(\hat{A}_{B^1})$, i.e. $z_0 \in \mathcal{Z}_{x_0}(\hat{A}_{B^1})$. Thus, $z_0 \in \mathrm{supp} \; \mu \setminus \mathcal{N}_{x_0}$ implies \\ $z_0 \in \mathrm{int} \; \mathcal{Z}_{x_0}(\hat{A}_{B^1})$. Consequently, there is a neighborhood $W$ of $z_0$ such that $\hat{A}_{B^1}^{-1}(Tx_0 + z) \geq 0$ for all $z \in W$. This implies $(x_0,z_0) \in \mathrm{int} \; \mathcal{Z}(\hat{A}_{B^1})$ for continuity reasons. Hence, $(x_0,z_0) \in \mathrm{int} \; \mathcal{O}(\hat{A}_{B^1}) \; \cap \; \mathrm{int} \; \mathcal{Z}(\hat{A}_{B^1}) = \mathrm{int} \; \mathcal{R}(\hat{A}_{B^1})$, which contradicts $(x_0,z_0) \notin \mathrm{int} \; \mathcal{R}(\hat{A}_B)$ for all $\hat{A}_B \in \mathcal{A}^\ast$.

\medskip

It remains to show that $\mathcal{N}_{x_0}$ is contained in a finite union of affine hyperplanes. Suppose that $z$ is such that $Ay < Tx_0 + z$ holds for some $y \in \mathbb{R}^m$, then $(x_0,z) \in \mathrm{int} \; F$. Consequently,
$$
\mathcal{F}_{x_0} \subseteq \bigcup_{i=1,\ldots,s} \lbrace z \in \mathbb{R}^s \; | \; (x,z_0) \in F, \; e_i^\top Ay = e_i^\top(Tx_0 + z) \; \forall y: \; Ay \leq Tx_0 + z \rbrace
$$
is contained in a finite union of affine hyperplanes. Similarly we have
$$
\mathcal{Z}_{x_0}(\hat{A}_B) \setminus \mathrm{int} \; \mathcal{Z}_{x_0}(\hat{A}_B) \subseteq \bigcup_{i=1,\ldots,s} \lbrace z \in \mathbb{R}^s \; | \; \hat{A}_B^{-1}(Tx_0 + z) \leq 0, \; e_i^\top(\hat{A}_B^{-1}(Tx_0 + z)) = 0 \rbrace
$$
$\hat{A}_B \in \mathcal{A}^\ast$, where $e_i^\top \hat{A}_B^{-1} \neq 0$ due to the regularity of $\hat{A}_B$. Finally, $\mathcal{V}_{x_0}(\hat{A}_B, \hat{A}_{B'})$ is an affine hyperplane for any $\hat{A}_B, \hat{A}_{B'} \in \mathcal{A}^\ast$ satisfying $\hat{q}^\top_B \hat{A}_B^{-1} \neq \hat{q}^\top_{B'} \hat{A}_{B'}^{-1}$.
\end{proof}

\medskip

\begin{theorem} \label{PropExpDiff}
Assume $\mathrm{dom} \; f \neq \emptyset$, $\mu_Z \in \mathcal{M}^1_s$, and let $x_0 \in \mathrm{int} \; F_Z$ be such that $\mu_Z[\mathcal{N}_{x_0}] = 0$. Then $Q_{\mathbb{E}}$ is continuously differentiable at $x_0$ and
$$
Q'_{\mathbb{E}}(x_0) = \int_{\mathrm{supp} \; \mu_Z \setminus \mathcal{N}_{x_0}} \nabla_x f(x_0,z)~\mu_Z(dz).
$$
\end{theorem}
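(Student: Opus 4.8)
The plan is to establish continuous differentiability of $Q_{\mathbb{E}}$ by differentiating under the integral sign, justified by a dominated-convergence argument. The groundwork is already in place: Lemma \ref{LemmafDiff} tells us that for every $z_0 \in \mathrm{supp}\,\mu_Z \setminus \mathcal{N}_{x_0}$ the integrand $f(\cdot,z)$ is continuously differentiable at $x_0$, with gradient $\nabla_x f(x_0,z) \in \{c^\top + \hat{q}_B^\top \hat{A}_B^{-1} T \mid \hat{A}_B \in \mathcal{A}^\ast\}$, and the hypothesis $\mu_Z[\mathcal{N}_{x_0}] = 0$ guarantees that this holds $\mu_Z$-almost everywhere. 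The key point is that the set of candidate gradients is \emph{finite}, since $\mathcal{A}^\ast \subseteq \mathcal{A}$ is a finite collection of base matrices, so the gradient map $z \mapsto \nabla_x f(x_0,z)$ is bounded by a constant independent of $z$.

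The first step is to verify the differentiation-under-the-integral hypotheses on a neighborhood of $x_0$. Since $x_0 \in \mathrm{int}\,F_Z$, I would fix a closed ball $\overline{B}(x_0,\delta) \subseteq F_Z$ and argue that for $\mu_Z$-almost every $z$ the difference quotients $(f(x_0 + he_j, z) - f(x_0,z))/h$ are uniformly bounded in $h$: this follows from the Lipschitz estimate in Lemma \ref{LemmaF}, whose constant $L = \|c\| + \Lambda\|q\|$ does not depend on $z$. Hence the constant function $L$ serves as an integrable dominating function (integrability is immediate, as $\mu_Z$ is a probability measure). Combined with the pointwise a.e.\ differentiability from Lemma \ref{LemmafDiff}, the standard theorem on differentiation under the integral (Leibniz rule) yields that $Q_{\mathbb{E}}$ is differentiable at $x_0$ with
$$
Q'_{\mathbb{E}}(x_0) = \int_{\mathrm{supp}\,\mu_Z \setminus \mathcal{N}_{x_0}} \nabla_x f(x_0,z)~\mu_Z(dz),
$$
where the domain of integration reflects that the integrand's gradient is defined off the $\mu_Z$-null set $\mathcal{N}_{x_0}$.

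The remaining task is \emph{continuity} of the gradient near $x_0$, which I expect to be the main obstacle since the decomposition of $F$ into regions of stability, and thus the active base matrix, varies with the point. The strategy is to show continuity of $x \mapsto Q'_{\mathbb{E}}(x)$ by a second dominated-convergence argument along an arbitrary sequence $x_n \to x_0$. For each fixed $z \notin \mathcal{N}_{x_0}$, the point $(x_0,z)$ lies in the interior of a single region of stability $\mathcal{R}(\hat{A}_B)$ (this is exactly what the proof of Lemma \ref{LemmafDiff} establishes when $z \notin \mathcal{N}_{x_0}$), so by openness $(x_n,z) \in \mathrm{int}\,\mathcal{R}(\hat{A}_B)$ for $n$ large, whence $\nabla_x f(x_n,z) = c^\top + \hat{q}_B^\top \hat{A}_B^{-1} T = \nabla_x f(x_0,z)$ eventually. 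Thus $\nabla_x f(x_n,z) \to \nabla_x f(x_0,z)$ pointwise $\mu_Z$-a.e., and again the uniform bound $\|\nabla_x f(x_n,z)\| \leq L$ furnishes the dominating function. Dominated convergence then gives $Q'_{\mathbb{E}}(x_n) \to Q'_{\mathbb{E}}(x_0)$, establishing continuity of the gradient and completing the proof. A subtle point to handle carefully is that the null set $\mathcal{N}_{x_0}$ depends on $x_0$; one must ensure that for $x_n$ close enough to $x_0$, the set $\mathcal{N}_{x_n}$ does not acquire positive measure, but this is not needed for the pointwise convergence argument above, which only uses the interior membership at $(x_0,z)$ together with openness.
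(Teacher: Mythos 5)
Your first half --- differentiability at $x_0$ via differentiation under the integral sign, with the Lipschitz constant from Lemma \ref{LemmaF} and the finiteness of the candidate gradient set $\lbrace c^\top + \hat{q}_B^\top\hat{A}_B^{-1}T \; | \; \hat{A}_B \in \mathcal{A}^\ast\rbrace$ supplying a constant dominating function --- is exactly the paper's argument (an application of Lemma \ref{LemmaDiffInt}), and it is fine.

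The continuity half has a genuine gap, and it is precisely the point you flag and then dismiss. The conclusion ``continuously differentiable at $x_0$'' requires $Q_\mathbb{E}$ to be differentiable at every $x$ in some neighborhood of $x_0$, and your dominated-convergence step requires, for each fixed large $n$, the identity $Q'_\mathbb{E}(x_n) = \int \nabla_x f(x_n,z)~\mu_Z(dz)$. Both of these force you to know that $f(\cdot,z)$ is differentiable at $x_n$ for $\mu_Z$-almost every $z$, i.e.\ essentially that $\mu_Z[\mathcal{N}_{x_n}]=0$. Your pointwise argument only yields: for each fixed $z\notin\mathcal{N}_{x_0}$ there is an $N(z)$ with $(x_n,z)\in\mathrm{int} \; \mathcal{R}(\hat{A}_B)$ for all $n\ge N(z)$. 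Since $N(z)$ depends on $z$, this does not produce, for a fixed $n$, a full-measure set of $z$ on which $\nabla_x f(x_n,z)$ exists; the exceptional set at $x_n$ is contained in $\mathcal{N}_{x_n}$, whose $\mu_Z$-measure is not controlled by the hypothesis $\mu_Z[\mathcal{N}_{x_0}]=0$, because the hyperplanes making up $\mathcal{N}_{x}$ translate with $x$. The paper confronts this head-on by asserting a neighborhood $U$ of $x_0$ with $\mathcal{N}_x\subseteq\mathcal{N}_{x_0}$ for all $x\in U$, and it then proves continuity of the derivative by an entirely different route: it writes $Q'_\mathbb{E}(x) = c^\top + \sum_{\Delta\in D}\mu_Z[\mathcal{W}(x,\Delta)]\Delta$, shows each $M_\Delta(x)=\mu_Z[\mathcal{W}(x,\Delta)]$ is upper semicontinuous via outer semicontinuity of $\overline{\mathcal{W}}$ and Fatou's Lemma, and upgrades to continuity using the identity $\sum_{\Delta\in D}M_\Delta\equiv 1$. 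If you supply the missing neighborhood argument, your direct dominated-convergence attack on the gradients would be a legitimate (and arguably simpler) alternative to that Fatou/partition argument; as written, however, the proposal does not establish continuous differentiability.
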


\medskip

\begin{proof}
We shall prove that Lemma \ref{LemmaDiffInt} in the Appendix is applicable. First, note that condition (a) is satisfied by $\mu_Z[\mathcal{N}_{x_0}] = 0$ and Lemma \ref{LemmafDiff}. Furthermore, by $x_0 \in \mathrm{int} \; F_Z$ there is neighborhood $U$ of $x_0$ that is contained in $F_Z$. In particular, $Q_\mathbb{E}$ is well-defined and finite by Proposition \ref{ThFiniteLipschitz}, i.e. the first part of condition (b) of Lemma \ref{LemmaDiffInt} is satisfied. To see that the second part holds as well, let $L$ denote the Lipschitz constant from Lemma \ref{LemmaF}. Fix any $x \in U \setminus \lbrace x_0 \rbrace$ and $z_0 \in \mathrm{supp} \; \mu_Z \setminus \mathcal{N}_{x_0}$, then
$$
\|x-x_0\|^{-1} \big|f(x,z_0) - f(x_0,z_0) - \nabla_x f(x_0,z_0)(x-x_0)\big|  \;
\leq \; L + \max_{\hat{A}_{B} \in \mathcal{A}^\ast} \|\hat{q}_{B}^\top \hat{A}_B^{-1} B\|
$$
follows immediately from the characterization of the derivative in Lemma \ref{LemmafDiff}. Thus, Lemma \ref{LemmaDiffInt} yields the differentiability of $Q_\mathbb{E}$.

\medskip

We shall now prove that the derivative is indeed continuous. By construction, there exists a neighborhood $U \subseteq \mathrm{int} \; F_Z$ of $x_0$ such that $\mathcal{N}_x \subseteq \mathcal{N}_{x_0}$ holds for any $x \in U$. Consequently, by $\mu_{Z}[\mathcal{N}_{x}] = 0$ and the previous arguments, $Q_\mathbb{E}$ is differentiable at any $x \in U$ and we have
\begin{equation*}
Q'_{\mathbb{E}}(x) = \int_{\mathrm{supp} \; \mu_Z \setminus \mathcal{N}_{x}} \nabla_x f(x,z)~\mu_Z(dz) = c^\top + \sum_{\Delta \in D} \mu_Z[\mathcal{W}(x,\Delta)] \Delta,
\end{equation*}
where $D := \lbrace \hat{q}_B^\top \hat{A}_B^{-1} T \; | \; \hat{A}_B \in \mathcal{A}^\ast \rbrace$ and
$$
\mathcal{W}(x,\Delta) := \lbrace z \in \mathrm{supp} \; \mu_Z \setminus \mathcal{N}_{x} \; | \; \exists \hat{A}_B \in \mathcal{A}^\ast: (x,z) \in \mathrm{int} \;\mathcal{R}(\hat{A}_B), \; \Delta =  \hat{q}_B^\top \hat{A}_B^{-1} T \rbrace.
$$
By Lemma \ref{LemmaOuterSemicont} in the Appendix, the set-valued mapping $\overline{\mathcal{W}}: \mathbb{R}^n \times D \rightrightarrows \mathbb{R}^s$,
\begin{align*}
\overline{\mathcal{W}}(x,\Delta) &= \lbrace z \in \mathrm{supp} \; \mu_Z \; | \; \exists \hat{A}_B \in \mathcal{A}^\ast: (x,z) \in \mathrm{cl} \; \mathrm{int} \;\mathcal{R}(A_B), \; \Delta =  \hat{q}_B^\top \hat{A}_B^{-1} T \rbrace \\
&= \lbrace z \in \mathrm{supp} \; \mu_Z \; | \; (x,z) \in \bigcup_{\hat{A}_B \in \mathcal{A}^\ast: \; \hat{q}_B^\top \hat{A}_B^{-1} T = \Delta}  \mathrm{cl} \; \mathrm{int} \; \mathcal{R}(A_B) \rbrace
\end{align*}
is outer semicontinuous. Furthermore, by the arguments used in the proof of Lemma \ref{LemmafDiff} we obtain
$$
\overline{\mathcal{W}}(x,\Delta) \setminus \mathcal{W}(x,\Delta) \subseteq \mathcal{N}_x
$$
and thus $\mu_Z[\mathcal{N}_{x}] = 0$ implies $\mu_Z[\mathcal{W}(x,\Delta)] = \mu_Z[\overline{\mathcal{W}}(x,\Delta)]$.

\medskip

We shall use the above representation to prove that for any $\Delta \in D$, the mapping $M_\Delta: \mathbb{R}^n \to \mathbb{R}$, $M_\Delta(x) := \mu_Z[\mathcal{W}(x,\Delta)]$ is continuous at $x_0$. Consider any sequence $\lbrace x_l \rbrace_{l \in \mathbb{N}} \subset \mathbb{R}^n$ that converges to $x_0$. Without loss of generality we may assume that $x_l \in U$ holds for all $l \in \mathbb{N}$. We have
\begin{align*}
\limsup_{l \to \infty} M_{\Delta}(x_l) &= \limsup_{l \to \infty} \mu_Z[\overline{\mathcal{W}}(x_l,\Delta)] = \limsup_{l \to \infty} \int_{\overline{\mathcal{W}}(x_l,\Delta)} 1~\mu_Z(dz) \\
&= \limsup_{l \to \infty} \int_{\mathrm{supp} \; \mu_Z} \mathrm{1}_{\overline{\mathcal{W}}(x_l,\Delta)}(z)~\mu_Z(dz) \\
&\leq \int_{\mathrm{supp} \; \mu_Z} \limsup_{l \to \infty} \; \mathrm{1}_{\overline{\mathcal{W}}(x_l,\Delta)}(z)~\mu_Z(dz),
\end{align*}
where
$$
\mathrm{1}_{\overline{\mathcal{W}}(x_l,\Delta)} := \Big\{ \begin{matrix} 1, &\text{if $z \in \overline{\mathcal{W}}(x_l,\Delta)$} \\ 0, &\text{else}\end{matrix}
$$
denotes the indicator function associated with the set $\overline{\mathcal{W}}(x_l,\Delta)$ and the final inequality is obtained by using Fatou's Lemma. We shall show that
\begin{equation}
\label{LimsupIndicator}
\limsup_{l \to \infty} \mathrm{1}_{\overline{\mathcal{W}}(x_l,\Delta)}(z) \leq \mathrm{1}_{\limsup_{l \to \infty} \overline{\mathcal{W}}(x_l,\Delta)}(z)
\end{equation}
holds for any $z \in \mathrm{supp} \; \mu_Z$. If the left-hand side in \eqref{LimsupIndicator} equals zero, the above inequality holds because the right-hand side is nonnegative. On the other hand, $\limsup_{l \to \infty} \mathrm{1}_{\overline{\mathcal{W}}(x_l,\Delta)}(z) = 1$ implies that there is a subsequence $\lbrace x'_l \rbrace_{l \in \mathbb{N}}$ of $\lbrace x_l \rbrace_{l \in \mathbb{N}}$ such that $z \in \overline{\mathcal{W}}(x'_l,\Delta)$ holds for all $l \in \mathbb{N}$. Thus, $z \in \limsup_{l \to \infty} \overline{\mathcal{W}}(x_l,\Delta)$ by definition and \eqref{LimsupIndicator} is satisfied.

\medskip

Invoking \eqref{LimsupIndicator} and the previous estimates we obtain
\begin{align*}
\limsup_{l \to \infty} M_{\Delta}(x_l) &\leq \int_{\mathrm{supp} \; \mu_Z} \mathrm{1}_{\limsup_{l \to \infty} \overline{\mathcal{W}}(x_l,\Delta)}(z)~\mu_Z(dz) \\
&\leq \int_{\mathrm{supp} \; \mu_Z} \mathrm{1}_{\overline{\mathcal{W}}(x_0,\Delta)}(z)~\mu_Z(dz) \\
&= \mu_{Z}[\overline{\mathcal{W}}(x_0,\Delta)] = M_{\Delta}(x_0),
\end{align*}
where the second inequality holds due the outer semicontinuity of $\overline{W}$ and the monotonicity of the indicator function. Consequently, $M_{\Delta}$ is upper semicontinuous at $x_0$ for any $\Delta \in D$.

\medskip

By $U \subseteq \mathrm{int} \; F_Z$ and the arguments used in the proof of Lemma \eqref{LemmafDiff},
\begin{equation}
\label{LSCSum}
\mathrm{supp} \; \mu_Z \subseteq \bigcup_{\Delta \in D} \mathcal{W}(x,\Delta)
\end{equation}
holds for any $x \in U$. By $\mathcal{W}(x,\Delta_1) \cap \mathcal{W}(x,\Delta_2) = \emptyset$ for any $\Delta_1, \Delta_2 \in D$ satisfying $\Delta_1 \neq \Delta_2$, \eqref{LSCSum} implies
$$
\sum_{\Delta \in D} M_{\Delta}(x) = \sum_{\Delta \in D} \mu_Z[\mathcal{W}(x,\Delta)] = 1
$$
for any $x \in U$. Consequently, as $M_{\Delta}$ is upper semicontinuous at $x_0$ for any $\Delta \in D$, we obtain that
$$
M_{\Delta}(x) = 1 - \sum_{\Delta' \in D \setminus \lbrace \Delta \rbrace} M_{\Delta'}(x)
$$
is representable as a sum functions that are lower semicontinuous at $x_0$. Thus, $M_{\Delta}$ is continuous at $x_0$ for any $\Delta \in D$, which implies the continuity of
$$
Q_\mathbb{E}'(x) = c^\top + \sum_{\Delta \in D} M_{\Delta}(x) \Delta
$$
at $x_0$.
\end{proof}

\medskip

When working with the expected excess, the inner maximum may cause additional points of nondifferentiability.

\medskip

\begin{theorem} \label{PropEEDiff}
Assume $\mathrm{dom} \; f \neq \emptyset$, $\mu_Z \in \mathcal{M}^1_s$, and let $x_0 \in \mathrm{int} \; F_Z$ and $\eta \in \mathbb{R}$ be such that $\mu_Z[\mathcal{N}_{x_0} \cup \mathcal{L}(x_0,\eta)] = 0$, where
$$
\mathcal{L}(x_0, \eta) := \bigcup_{\hat{A}_{B} \in \mathcal{A}^\ast: \; \hat{q}_{B}^\top \hat{A}_B^{-1} \neq 0} \left\{ z \in \mathbb{R}^s \; | \; c^\top x_0 +  \hat{q}_{B}^\top \hat{A}_B^{-1}(Tx_0 + z) = \eta \right\}.
$$
Then $Q_{\mathrm{EE}_\eta}$ is continuously differentiable at $x_0$.
\end{theorem}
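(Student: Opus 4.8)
The plan is to follow the blueprint of the proof of Theorem \ref{PropExpDiff}, working with the integrand $h(x,z) := \max\{f(x,z) - \eta, 0\}$, so that $Q_{\mathrm{EE}_\eta}(x) = \int_{\mathrm{supp}\,\mu_Z} h(x,z)\,\mu_Z(dz)$. I would proceed in three stages: establish pointwise continuous differentiability of $h(\cdot,z_0)$ at $x_0$ for $\mu_Z$-almost every $z_0$, invoke the differentiation-under-the-integral result Lemma \ref{LemmaDiffInt} to differentiate $Q_{\mathrm{EE}_\eta}$, and finally verify continuity of the gradient. The role of the additional set $\mathcal{L}(x_0,\eta)$ is to absorb the points of nondifferentiability introduced by the kink of $t \mapsto \max\{t-\eta,0\}$ at $t = \eta$.

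\emph{First stage.} Fix $z_0 \in \mathrm{supp}\,\mu_Z \setminus (\mathcal{N}_{x_0} \cup \mathcal{L}(x_0,\eta))$. By Lemma \ref{LemmafDiff}, $f(\cdot,z_0)$ is continuously differentiable at $x_0$ with $\nabla_x f(x_0,z_0) = c^\top + \hat{q}_B^\top \hat{A}_B^{-1} T$ for the base matrix $\hat{A}_B \in \mathcal{A}^\ast$ satisfying $(x_0,z_0) \in \mathrm{int}\,\mathcal{R}(\hat{A}_B)$. Since $t \mapsto \max\{t-\eta,0\}$ is continuously differentiable away from $t = \eta$, a short case distinction shows that, whenever $f(x_0,z_0) \neq \eta$, the composition $h(\cdot,z_0)$ is continuously differentiable at $x_0$ with
$$
\nabla_x h(x_0,z_0) = \mathrm{1}_{\{f(x_0,z_0) > \eta\}}\,\nabla_x f(x_0,z_0).
$$
The kink case $f(x_0,z_0) = \eta$ requires care: if the active base satisfies $\hat{q}_B^\top \hat{A}_B^{-1} \neq 0$, then $c^\top x_0 + \hat{q}_B^\top \hat{A}_B^{-1}(Tx_0 + z_0) = \eta$ places $z_0$ in $\mathcal{L}(x_0,\eta)$ and is excluded by hypothesis; the bases with $\hat{q}_B^\top \hat{A}_B^{-1} = 0$ render $f(\cdot,z_0)$ locally independent of $z$, and I would argue that the associated level set carries no $\mu_Z$-mass, so that $h(\cdot,z_0)$ is continuously differentiable at $x_0$ for $\mu_Z$-almost every $z_0$.

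\emph{Second stage.} I would verify the hypotheses of Lemma \ref{LemmaDiffInt}. Condition (a) is the almost-everywhere differentiability just established. For condition (b), finiteness of $Q_{\mathrm{EE}_\eta}$ on a neighborhood of $x_0$ follows from Theorem \ref{ThFiniteLipschitz}, while the uniform integrable majorant for the difference quotients is obtained as in Theorem \ref{PropExpDiff}: the estimate $|\max\{a,0\} - \max\{b,0\}| \leq |a-b|$ transfers the Lipschitz constant $L$ of Lemma \ref{LemmaF} to $h(\cdot,z)$, and $\nabla_x h(x_0,z)$ takes only finitely many, uniformly bounded values, whence $\|x-x_0\|^{-1}\,|h(x,z)-h(x_0,z)-\nabla_x h(x_0,z)(x-x_0)|$ is bounded by $L + \max_{\hat{A}_B \in \mathcal{A}^\ast}\|\hat{q}_B^\top \hat{A}_B^{-1} T\|$. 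Lemma \ref{LemmaDiffInt} then yields differentiability of $Q_{\mathrm{EE}_\eta}$ at $x_0$ with
$$
Q'_{\mathrm{EE}_\eta}(x_0) = \int_{\mathrm{supp}\,\mu_Z \setminus (\mathcal{N}_{x_0} \cup \mathcal{L}(x_0,\eta))} \mathrm{1}_{\{f(x_0,z) > \eta\}}\,\nabla_x f(x_0,z)\,\mu_Z(dz).
$$

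\emph{Third stage, the main obstacle.} It remains to show continuity of $x \mapsto Q'_{\mathrm{EE}_\eta}(x)$ at $x_0$. As in Theorem \ref{PropExpDiff}, I would first choose a neighborhood $U \subseteq \mathrm{int}\,F_Z$ of $x_0$ on which $\mathcal{N}_x \subseteq \mathcal{N}_{x_0}$, so that the gradient formula persists throughout $U$. The difficulty relative to the expectation is that the excess region $\{z : f(x,z) > \eta\}$ now moves with $x$, so the integrand varies both through the (locally constant) gradient and through the indicator. I would argue by dominated convergence along an arbitrary sequence $x_l \to x_0$ in $U$: for $\mu_Z$-almost every $z$ the pair $(x_0,z)$ lies in the interior of some region of stability, so $\nabla_x f(\cdot,z)$ is locally constant, and since $f(x_0,z) \neq \eta$ for almost every such $z$ by the first stage, continuity of $f$ makes $\mathrm{1}_{\{f(\cdot,z) > \eta\}}$ eventually constant; hence the integrands converge pointwise almost everywhere and are dominated by the $\mu_Z$-integrable constant $\max_{\hat{A}_B \in \mathcal{A}^\ast}\|c^\top + \hat{q}_B^\top \hat{A}_B^{-1} T\|$. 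The crux is exactly that the boundary level set $\{f(x_0,\cdot) = \eta\}$ is $\mu_Z$-null off $\mathcal{N}_{x_0} \cup \mathcal{L}(x_0,\eta)$, which is what tames the moving indicator; should one prefer to avoid pointwise convergence, the outer-semicontinuity and Fatou argument of Theorem \ref{PropExpDiff}, applied to the sets $\mathcal{W}(x,\Delta) \cap \{f(x,\cdot) > \eta\}$ via Lemma \ref{LemmaOuterSemicont}, delivers the same conclusion.
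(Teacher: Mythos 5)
Your proposal mirrors the paper's own proof almost step for step: the same integrand $g_\eta(x,z)=\max\{f(x,z)-\eta,0\}$, the same case split on whether $f(x_0,z_0)=\eta$, the same appeal to Lemma \ref{LemmaDiffInt} with the Lipschitz bound transferred through $|\max\{a,0\}-\max\{b,0\}|\leq|a-b|$, and the same plan of carrying the $\overline{\mathcal{W}}$/Fatou continuity argument of Theorem \ref{PropExpDiff} over to the indicator-weighted gradient formula. Your second and third stages are at the same level of detail as the paper's ``straightforward extension of the arguments''.

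The one genuine gap is in your first stage, in the kink case $f(x_0,z_0)=\eta$ when the active base satisfies $\hat{q}_B^\top\hat{A}_B^{-1}=0$. You propose to dismiss it by asserting that ``the associated level set carries no $\mu_Z$-mass'', but the hypotheses give you no such control: bases with $\hat{q}_B^\top\hat{A}_B^{-1}=0$ are precisely the ones excluded from the union defining $\mathcal{L}(x_0,\eta)$, and on the interior of the region of stability of such a base one has $f(x_0,z)\equiv c^\top x_0$, constant in $z$. The set $\{z \mid f(x_0,z)=\eta\}$ restricted to that region is therefore either empty or the entire region, and in the latter case it may carry positive $\mu_Z$-mass; a null-set argument cannot close this case. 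The paper instead argues pointwise, claiming that for such $z_0$ the function $g_\eta(\cdot,z_0)$ vanishes identically near $x_0$ and is hence trivially smooth there --- but note that this computation silently replaces $f(x,z_0)-\eta$ by $\hat{q}_B^\top\hat{A}_B^{-1}(Tx+z_0)$ and drops the term $c^\top(x-x_0)$, so the case is genuinely delicate: if $c^\top x_0=\eta$, $c\neq 0$, and such a region of stability has positive mass, $Q_{\mathrm{EE}_\eta}$ picks up a kink proportional to $\max\{c^\top(x-x_0),0\}$. To make either argument airtight one should assume $c^\top x_0\neq\eta$ (compare the condition $Q_\mathbb{E}(x_0)\neq 0$ imposed in Theorem \ref{PropSDDiff}) or enlarge $\mathcal{L}(x_0,\eta)$ accordingly; as written, your ``I would argue'' marks exactly the point that still needs an argument.
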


\medskip

\begin{proof}
Consider the mapping $g_\eta: \mathbb{R}^n \times \mathbb{R}^s \to \overline{\mathbb{R}}$ given by
$$
g_\eta(x,z) := \max \lbrace f(x,z) - \eta, 0 \rbrace,
$$
which is finite and Lipschitz continuous on $F$ by Lemma \ref{LemmaF}. Consider any fixed $z_0 \in \mathrm{supp} \; \mu_Z \setminus \big( \mathcal{N}_{x_0} \cup \mathcal{L}(x_0,\eta) \big)$. If $f(x_0,z_0) \neq \eta$, there is a neighborhood $U$ of $x_0$ such that either $g_{\eta}(x,z_0) = f(x,z_0) - \eta$ for all $x \in U$ or $g_{\eta}(x,z_0) = 0$ for all $x \in U$. In both cases $g_{\eta}(\cdot,z_0)$ is continuously differentiable at $x_0$ by Theorem \ref{PropExpDiff}.
\\
\\
Now consider the case where $f(x_0,z_0) = \eta$. The proof of Lemma \ref{LemmafDiff} shows that there is some $\hat{A}_B \in \mathcal{A}^\ast$ such that $(x_0,z_0) \in \mathrm{int} \; \mathcal{R}(\hat{A}_B)$. In particular, we have $c^\top x_0 + \hat{q}_{B}^\top \hat{A}_B^{-1}(Tx_0 + z_0) = \eta$ and $z_0 \notin \mathcal{L}(x_0,\eta)$ implies $\hat{q}_{B}^\top \hat{A}_B^{-1} = 0$. Thus, $\eta = 0$ and there is a neighborhood $V$ of $x_0$ such that $g_\eta(x,z_0) = \max \lbrace \hat{q}_{B}^\top \hat{A}_B^{-1}(Tx + z_0), 0\rbrace = 0$ for all $x \in V$. Hence, $g_\eta(\cdot,z_0)$ is continuously differentiable at $x_0$.
\\
\\
Invoking Lemma \ref{LemmaDiffInt} and the above considerations, the differentiability of $Q_{\mathrm{EE}_\eta}$ and the continuity of
$$
Q_{\mathrm{EE}_\eta}'(x) = \sum_{\Delta \in D} \mu_Z \big[ \overline{\mathcal{W}}(x,\Delta) \cap \lbrace z \in \mathrm{supp} \; \mu_Z \; | \; f(x,z) \geq \eta \rbrace \big] \Delta
$$
at $x_0$ can be shown by a straightforward extension of the arguments used in the proof of Theorem \ref{PropExpDiff}.
\end{proof}

\medskip

\begin{theorem} \label{PropSDDiff}
Assume $\mathrm{dom} \; f \neq \emptyset$, $\mu_Z \in \mathcal{M}^1_s$, and let $x_0 \in \mathrm{int} \; F_Z$ be such that $Q_\mathbb{E}(x_0) \neq 0$ and $\mu_Z[\mathcal{N}_{x_0} \cup \mathcal{L}(x_0,Q_\mathbb{E}(x_0))] = 0$. Then $Q_{\mathrm{SD}_\rho}$ is continuously differentiable at $x_0$ for any $\rho \in [0,1)$.
\end{theorem}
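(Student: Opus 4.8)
The plan is to exploit the decomposition $Q_{\mathrm{SD}_\rho}(x) = Q_{\mathbb{E}}(x) + \rho\,\Phi(x)$, where $\Phi(x) := \mathbb{E}[\max\{f(x,Z(\cdot)) - Q_{\mathbb{E}}(x), 0\}]$, together with the observation that $\mu_Z[\mathcal{N}_{x_0} \cup \mathcal{L}(x_0,Q_{\mathbb{E}}(x_0))] = 0$ forces $\mu_Z[\mathcal{N}_{x_0}] = 0$, so that $Q_{\mathbb{E}}$ is continuously differentiable at $x_0$ by Theorem \ref{PropExpDiff}. It therefore suffices to show that $\Phi$ is continuously differentiable at $x_0$; the claim then follows by summation for any $\rho \in [0,1)$. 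Writing $\eta_0 := Q_{\mathbb{E}}(x_0)$, the essential difference from Theorem \ref{PropEEDiff} is that the target level is no longer a fixed constant but equals the now continuously differentiable map $x \mapsto Q_{\mathbb{E}}(x)$.

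I would treat $\Phi$ directly via Lemma \ref{LemmaDiffInt}, exactly as in Theorems \ref{PropExpDiff} and \ref{PropEEDiff}, applied to the integrand $\phi(x,z) := \max\{f(x,z) - Q_{\mathbb{E}}(x), 0\}$. For fixed $z$ the inner function $x \mapsto f(x,z) - Q_{\mathbb{E}}(x)$ is continuously differentiable at $x_0$ (by Lemma \ref{LemmafDiff} for $z \notin \mathcal{N}_{x_0}$ and by Theorem \ref{PropExpDiff} for the subtracted term), while $t \mapsto \max\{t,0\}$ is differentiable away from $t=0$. Hence $\phi(\cdot,z)$ is continuously differentiable at $x_0$ whenever $f(x_0,z) \neq \eta_0$, with $\nabla_x \phi(x_0,z) = \mathbf{1}[f(x_0,z) > \eta_0]\,(\nabla_x f(x_0,z) - Q_{\mathbb{E}}'(x_0))$. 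The domination hypothesis (b) of Lemma \ref{LemmaDiffInt} is obtained from the Lipschitz bound of Lemma \ref{LemmaF}, the bound on $\nabla_x f$ from Lemma \ref{LemmafDiff}, and the local boundedness of $Q_{\mathbb{E}}'$, precisely as in the proof of Theorem \ref{PropExpDiff}.

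The pivotal step is to verify the almost-everywhere differentiability condition (a), i.e. that the kink set $K := \{z \in \mathrm{supp}\,\mu_Z : f(x_0,z) = \eta_0\}$ is $\mu_Z$-null. I would decompose $K \setminus \mathcal{N}_{x_0}$ according to the active region: for $z \notin \mathcal{N}_{x_0}$ there is $\hat{A}_B \in \mathcal{A}^\ast$ with $(x_0,z) \in \mathrm{int}\,\mathcal{R}(\hat{A}_B)$, whence $f(x_0,z) = c^\top x_0 + \hat{q}_B^\top \hat{A}_B^{-1}(Tx_0 + z)$. If $\hat{q}_B^\top \hat{A}_B^{-1} \neq 0$, then $z \in \mathcal{L}(x_0,\eta_0)$, which is $\mu_Z$-null by assumption; the remaining case $\hat{q}_B^\top \hat{A}_B^{-1} = 0$ is exactly the degenerate situation isolated in the proof of Theorem \ref{PropEEDiff}, where $f(x_0,z) = \eta_0$ forces the target level to vanish. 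This is precisely where the hypothesis $Q_{\mathbb{E}}(x_0) = \eta_0 \neq 0$ enters: it excludes this case, so that $K \subseteq \mathcal{N}_{x_0} \cup \mathcal{L}(x_0,\eta_0)$ and hence $\mu_Z[K] = 0$. With (a) and (b) in place, Lemma \ref{LemmaDiffInt} yields differentiability of $\Phi$ at $x_0$ and the formula $\Phi'(x_0) = \int_{\{f(x_0,\cdot) > \eta_0\}} (\nabla_x f(x_0,z) - Q_{\mathbb{E}}'(x_0))\,\mu_Z(dz)$.

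Finally, I would establish continuity of $\Phi'$ at $x_0$ by the same outer-semicontinuity and Fatou argument used for $Q_{\mathbb{E}}'$ in Theorem \ref{PropExpDiff}, now tracking the measures $\mu_Z[\overline{\mathcal{W}}(x,\Delta) \cap \{z : f(x,z) \geq Q_{\mathbb{E}}(x)\}]$. I expect this last step to be the main obstacle: because the target level moves with $x$, one must ensure that both the exclusion of the degeneracy and the $\mu_Z$-nullity of the kink set persist on an entire neighborhood of $x_0$, rather than merely at $x_0$. This is secured by the continuity of $Q_{\mathbb{E}}$ (so that $Q_{\mathbb{E}}(x) \neq 0$ for all $x$ near $x_0$) and by the inclusion $\mathcal{N}_x \subseteq \mathcal{N}_{x_0}$ available on a neighborhood of $x_0$, which together upgrade the pointwise upper semicontinuity of the relevant measures to continuity, exactly as in the concluding part of the proof of Theorem \ref{PropExpDiff}.
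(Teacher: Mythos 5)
Your proposal follows essentially the same route as the paper: the same decomposition $Q_{\mathrm{SD}_\rho}=Q_\mathbb{E}+\rho\,Q_{\mathrm{EE}_{Q_\mathbb{E}(\cdot)}}$, the same reduction via Lemma \ref{LemmaDiffInt} to almost-everywhere differentiability of the integrand $\max\{f(x,z)-Q_\mathbb{E}(x),0\}$, the same use of $\mathcal{L}(x_0,Q_\mathbb{E}(x_0))$ together with $Q_\mathbb{E}(x_0)\neq 0$ to exclude the kink set up to a $\mu_Z$-null set, and the same appeal to the outer-semicontinuity argument of Theorem \ref{PropExpDiff} for continuity of the derivative. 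The only difference is cosmetic: you make the chain-rule term $-Q'_\mathbb{E}(x_0)$ in the gradient of the integrand explicit, which the paper leaves implicit in its formula for $Q'_{\mathrm{SD}_\rho}$.
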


\medskip

\begin{proof}
Fix any $p \in [0,1)$. By Theorem \ref{PropExpDiff} and the definition of $Q_{\mathrm{SD}_\rho}$ it is sufficient to show differentiability of the mapping $x \mapsto Q_{\mathrm{EE}_{Q_\mathbb{E}(x)}}(x)$. Consider the function $g: \mathbb{R}^n \times \mathbb{R}^s \to \overline{\mathbb{R}}$ defined by
$$
g(x,z) := \max \lbrace f(x,z) - Q_\mathbb{E}(x), 0 \rbrace,
$$
which is finite and Lipschitz continuous on $F$ by Lemma \ref{LemmaF} and Theorem \ref{ThFiniteLipschitz}. Fix any $z_0 \in \mathrm{supp} \; \mu_Z \setminus \big( \mathcal{N}_{x_0} \cup \mathcal{L}(x_0,Q_\mathbb{E}(x_0)) \big)$ and suppose that $f(x_0,z_0) = Q_\mathbb{E}(x_0)$. By the proof of Lemma \ref{LemmafDiff} there is some $\hat{A}_B \in \mathcal{A}^\ast$ such that $(x_0,z_0) \in \mathrm{int} \; \mathcal{R}(\hat{A}_B)$. In particular, we have $\hat{q}_{B}^\top \hat{A}_B^{-1}(Tx_0 + z_0) = Q_\mathbb{E}(x_0)$ and $z_0 \notin \mathcal{L}(x_0,Q_\mathbb{E}(x_0))$ implies $\hat{q}_{B}^\top \hat{A}_B^{-1} = 0$. Hence, $Q_\mathbb{E}(x_0) = f(x_0,z_0) = 0$, which contradicts the assumptions.
\\
\\
Thus, $f(x_0,z_0) \neq Q_\mathbb{E}(x_0)$ and there is a neighborhood $U$ of $x_0$ such that either $g(x,z_0) = f(x,z_0) - Q_\mathbb{E}(x_0)$ for all $x \in U$ or $g(x,z_0) = 0$ for all $x \in U$. In both cases $g(\cdot,z_0)$ is continuously differentiable at $x_0$ by Theorem \ref{PropExpDiff}.
\\
\\
Consequently, the differentiability of $Q_{\mathrm{SD}_\rho}$ and the continuity of
$$
Q_{\mathrm{SD}_\rho}'(x) = Q'_\mathbb{E}(x) + \rho \sum_{\Delta \in D} \mu_Z \big[ \overline{\mathcal{W}}(x,\Delta) \cap \lbrace z \in \mathrm{supp} \; \mu_Z \; | \; f(x,z) \geq Q_\mathbb{E}(x) \rbrace \big] \Delta
$$
at $x_0$ can be shown by a straightforward extension of the arguments used in the proof of Theorem \ref{PropExpDiff}.
\end{proof}

\medskip

\begin{cor}
\label{CorDensity}
Assume $\mathrm{dom} \; f \neq \emptyset$ and that $\mu_Z \in \mathcal{M}^1_s$ is absolutely continuous with respect to the Lebesgue measure. Fix any $\eta \in \mathbb{R}$, then $Q_\mathbb{E}$ and $Q_{\mathrm{EE}_\eta}$ are continuously differentiable at any $x_0 \in \mathrm{int} \; F_Z$. Furthermore, for any $\rho \in [0,1)$, $Q_{\mathrm{SD}_\rho}$ is continuously differentiable at any $x_0 \in \mathrm{int} \; F_Z$ satisfying $Q_\mathbb{E}(x_0) \neq 0$.
\end{cor}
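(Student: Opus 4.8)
The plan is to reduce everything to the three differentiability theorems already established, so that the only remaining task is to verify that the relevant exceptional sets are $\mu_Z$-null under the absolute continuity hypothesis. Concretely, Theorem \ref{PropExpDiff} requires $\mu_Z[\mathcal{N}_{x_0}] = 0$, Theorem \ref{PropEEDiff} requires $\mu_Z[\mathcal{N}_{x_0} \cup \mathcal{L}(x_0,\eta)] = 0$, and Theorem \ref{PropSDDiff} requires the same with $\eta = Q_\mathbb{E}(x_0)$ together with $Q_\mathbb{E}(x_0) \neq 0$; thus it suffices to show that these unions are $\mu_Z$-null for every $x_0 \in \mathrm{int} \; F_Z$ and every $\eta \in \mathbb{R}$.

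First I would recall that, by Lemma \ref{LemmafDiff}, the set $\mathcal{N}_{x_0}$ is contained in a finite union of affine hyperplanes in $\mathbb{R}^s$. Next I would observe that the same holds for $\mathcal{L}(x_0,\eta)$: by its very definition it is a finite union, indexed over those $\hat{A}_B \in \mathcal{A}^\ast$ with $\hat{q}_B^\top \hat{A}_B^{-1} \neq 0$, of the level sets $\lbrace z \in \mathbb{R}^s \mid c^\top x_0 + \hat{q}_B^\top \hat{A}_B^{-1}(Tx_0 + z) = \eta \rbrace$. Since the indexing restriction $\hat{q}_B^\top \hat{A}_B^{-1} \neq 0$ forces $z \mapsto c^\top x_0 + \hat{q}_B^\top \hat{A}_B^{-1}(Tx_0 + z)$ to be a nonconstant affine function, each such level set is an affine hyperplane. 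Hence both $\mathcal{N}_{x_0}$ and $\mathcal{L}(x_0,\eta)$ lie in finite unions of affine hyperplanes.

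The key step is then to note that any affine hyperplane in $\mathbb{R}^s$ has Lebesgue measure zero, so finite unions thereof are Lebesgue-null as well. Because $\mu_Z$ is absolutely continuous with respect to the Lebesgue measure, every Lebesgue-null set is also $\mu_Z$-null; consequently $\mu_Z[\mathcal{N}_{x_0}] = 0$ and $\mu_Z[\mathcal{L}(x_0,\eta)] = 0$ for every $x_0 \in \mathrm{int} \; F_Z$ and every $\eta \in \mathbb{R}$, whence $\mu_Z[\mathcal{N}_{x_0} \cup \mathcal{L}(x_0,\eta)] = 0$.

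With these measure-zero statements in hand, I would simply invoke the earlier theorems: Theorem \ref{PropExpDiff} yields continuous differentiability of $Q_\mathbb{E}$ at any $x_0 \in \mathrm{int} \; F_Z$; Theorem \ref{PropEEDiff} yields that of $Q_{\mathrm{EE}_\eta}$; and Theorem \ref{PropSDDiff}, applied at those $x_0$ where $Q_\mathbb{E}(x_0) \neq 0$ (with $\eta = Q_\mathbb{E}(x_0)$), yields that of $Q_{\mathrm{SD}_\rho}$. I do not expect any genuine obstacle here: the entire substance is contained in the preceding results, and the corollary amounts to the observation that absolute continuity converts the abstract null-set hypotheses into automatic consequences of the hyperplane structure of the exceptional sets. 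The only point warranting an explicit word is the verification that $\mathcal{L}(x_0,\eta)$ is a finite union of hyperplanes, which follows directly from its definition as indicated above.
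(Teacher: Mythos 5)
Your proposal is correct and follows essentially the same route as the paper: reduce to Theorems \ref{PropExpDiff}, \ref{PropEEDiff} and \ref{PropSDDiff}, observe that $\mathcal{N}_{x_0}$ and $\mathcal{L}(x_0,\eta)$ are contained in finite unions of affine hyperplanes, and use absolute continuity to conclude these sets are $\mu_Z$-null. Your explicit check that the condition $\hat{q}_B^\top \hat{A}_B^{-1} \neq 0$ makes each level set in $\mathcal{L}(x_0,\eta)$ a genuine hyperplane is a detail the paper leaves implicit, but the argument is otherwise identical.
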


\medskip

\begin{proof}
$Q_\mathbb{E}$: Since $\mathcal{N}_{x_0}$ is a finite union of affine hyperplanes, i.e. a set with Lebesgue measure zero,  $\mu_Z[\mathcal{N}_{x_0}] = 0$ holds for all $x_0 \in \mathrm{int} \; F_Z$ and the statement is a direct consequence of Theorem \ref{PropExpDiff}.

\medskip

$Q_{\mathrm{EE}_\eta}$: By definition, $\mathcal{L}(x_0, \eta)$ is a finite union of affine hyperplanes, which implies $\mu_Z[\mathcal{N}_{x_0} \cup \mathcal{L}(x_0,\eta)] = 0$ for any $x_0 \in \mathrm{int} \; F_Z$ and Theorem \ref{PropEEDiff} is applicable.

\medskip

$Q_{\mathrm{SD}_\rho}$: For any fixed $x_0$, $\mathcal{L}(x_0, Q_\mathbb{E}(x_0))$ is a set of Lebesgue measure zero and the statement follows from Theorem \ref{PropSDDiff}.
\end{proof}

\medskip

The previous results give sufficient conditions for differentiability of the  objective function of problem \eqref{ELB}. In the presence of differentiability, necessary  optimality can be formulated in terms of directional derivatives.

\medskip

\begin{proposition}
\label{PropNecessaryCond}
Assume $\mathrm{dom} \; f \neq \emptyset$, $\mu_Z \in \mathcal{M}^p_s$, and $X \subseteq F_Z$. Furthermore, let $x_0 \in X$ be a local minimizer of problem \eqref{ELB} and assume that $Q_\mathcal{R}$ is differentiable at $x_0$. Then
\begin{equation}
\label{NecessaryCond}
Q'_\mathcal{R}(x_0)d \geq 0
\end{equation}
holds for any feasible direction
$$
d \in \mathcal{D}(x_0,X) := \lbrace d \in \mathbb{R}^n \; | \; \exists \varepsilon_0 > 0: \; x_0 + \varepsilon d \in X \; \forall \varepsilon \in [0,\varepsilon_0] \rbrace.
$$
\end{proposition}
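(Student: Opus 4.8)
The plan is to reduce the statement to the elementary one-dimensional fact that a function which attains a local minimum at the left endpoint of a feasible line segment and is differentiable there must have nonnegative one-sided derivative along that segment. No machinery beyond the definition of the derivative is needed.

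First I would fix an arbitrary feasible direction $d \in \mathcal{D}(x_0,X)$ and invoke the definition of $\mathcal{D}(x_0,X)$ to produce $\varepsilon_0 > 0$ with $x_0 + \varepsilon d \in X$ for all $\varepsilon \in [0,\varepsilon_0]$. Because $X \subseteq F_Z$, the entire segment lies in $F_Z$, so I would record that $Q_\mathcal{R}$ is real-valued along it: for $x \in F_Z$ the Lipschitz bound of Lemma \ref{LemmaF} gives $|f(x,z)| \leq |f(x,z_0)| + L\|z - z_0\|$, whence $f(x,Z(\cdot)) \in L^p(\Omega,\mathcal{F},\mathbb{P})$ by $\mu_Z \in \mathcal{M}^p_s$, and the real-valuedness of $\mathcal{R}|_{L^p}$ then yields finiteness of $Q_\mathcal{R}(x)$. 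In particular the difference quotients below are well-defined. (In fact, differentiability of $Q_\mathcal{R}$ at $x_0$ already presupposes that $Q_\mathcal{R}$ is finite in a neighborhood of $x_0$, so this step is only a safeguard.)

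Next I would use that $x_0$ is a local minimizer: there is a neighborhood $U$ of $x_0$ with $Q_\mathcal{R}(x_0) \leq Q_\mathcal{R}(x)$ for all $x \in X \cap U$. Shrinking $\varepsilon_0$ if necessary so that $x_0 + \varepsilon d \in U$ for all $\varepsilon \in [0,\varepsilon_0]$, I obtain $Q_\mathcal{R}(x_0 + \varepsilon d) \geq Q_\mathcal{R}(x_0)$ for every such $\varepsilon$, and therefore
$$
\frac{Q_\mathcal{R}(x_0 + \varepsilon d) - Q_\mathcal{R}(x_0)}{\varepsilon} \geq 0 \quad \text{for all } \varepsilon \in (0,\varepsilon_0].
$$
Finally I would exploit the assumed differentiability of $Q_\mathcal{R}$ at $x_0$: the expansion $Q_\mathcal{R}(x_0 + \varepsilon d) = Q_\mathcal{R}(x_0) + \varepsilon\, Q'_\mathcal{R}(x_0) d + o(\varepsilon)$ shows that the left-hand side of the displayed inequality converges to $Q'_\mathcal{R}(x_0) d$ as $\varepsilon \downarrow 0$. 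Passing to the limit then gives $Q'_\mathcal{R}(x_0) d \geq 0$, which is exactly \eqref{NecessaryCond}, and since $d$ was an arbitrary feasible direction the claim follows.

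As for the main obstacle: there is none of real substance here, as this is the classical first-order necessary condition for differentiable functions over an arbitrary feasible set. The only point deserving a moment's attention is ensuring that $Q_\mathcal{R}$ is finite and the difference quotient meaningful along the whole segment, which the inclusion $X \subseteq F_Z$ together with the moment hypothesis $\mu_Z \in \mathcal{M}^p_s$ and the real-valuedness of $\mathcal{R}|_{L^p}$ guarantee; differentiability at $x_0$ then does the rest.
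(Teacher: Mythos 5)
Your proposal is correct and follows essentially the same route as the paper: the paper likewise first secures real-valuedness of $Q_\mathcal{R}$ on $X$ (via Corollary \ref{CorCont}, which rests on the same Lipschitz bound and moment condition you invoke) and then appeals to the classical first-order necessary condition, citing \cite[Proposition 2.1.2]{Bertsekas1999} for the difference-quotient argument you write out explicitly.
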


\vspace{-15pt}

\begin{proof}
$\mathrm{dom} \; f \neq \emptyset$, $\mu_Z \in \mathcal{M}^p_s$ and $X \subseteq F_Z$ imply that $Q_\mathcal{R}$ is real-valued on $X$ by Corollary \ref{CorCont} below. For a proof of the necessity of \eqref{NecessaryCond} we refer to \cite[Proposition 2.1.2]{Bertsekas1999}.
\end{proof}

\medskip

\begin{cor}
Assume $\mathrm{dom} \; f \neq \emptyset$, $X \subseteq F_Z$, and let $\mu_Z \in \mathcal{M}^1_s$ be absolutely continuous with respect to the Lebesgue measure. Furthermore, assume
$$
-c^\top \notin \mathrm{conv} \; D = \mathrm{conv} \; \lbrace \hat{q}_B^\top \hat{A}_B^{-1} T \; | \; \hat{A}_B \in \mathcal{A}^\ast \rbrace,
$$
then any local minimizer of
\begin{equation}
\label{ExpModel}
\min_x \lbrace Q_\mathbb{E}(x) \; | \; x \in X \rbrace
\end{equation}
is an element of $X \setminus \mathrm{int} \; X$.
\end{cor}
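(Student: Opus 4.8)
The plan is to argue by contradiction and rule out interior local minimizers using the first-order necessary condition. Suppose $x_0 \in \mathrm{int}\, X$ is a local minimizer of \eqref{ExpModel}. Since $X \subseteq F_Z$ and interior is monotone, $x_0 \in \mathrm{int}\, X \subseteq \mathrm{int}\, F_Z$. As $\mu_Z$ is absolutely continuous with respect to the Lebesgue measure, Corollary \ref{CorDensity} guarantees that $Q_\mathbb{E}$ is continuously differentiable at $x_0$. Hence Proposition \ref{PropNecessaryCond} is applicable (with $p=1$, $\mathcal{R} = \mathbb{E}$, and $X \subseteq F_Z$) and yields $Q'_\mathbb{E}(x_0)\, d \geq 0$ for every feasible direction $d \in \mathcal{D}(x_0,X)$.

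The first key observation is that interiority trivializes the feasible-direction cone: for $x_0 \in \mathrm{int}\, X$ a small ball around $x_0$ is contained in $X$, so $\mathcal{D}(x_0,X) = \mathbb{R}^n$. Applying the necessary condition to both $d$ and $-d$ for arbitrary $d$ therefore forces $Q'_\mathbb{E}(x_0) = 0$. Next I would invoke the explicit form of the gradient established in the proof of Theorem \ref{PropExpDiff}, namely
$$
Q'_\mathbb{E}(x_0) = c^\top + \sum_{\Delta \in D} M_\Delta(x_0)\,\Delta,
$$
together with the facts $M_\Delta(x_0) = \mu_Z[\mathcal{W}(x_0,\Delta)] \geq 0$ and $\sum_{\Delta \in D} M_\Delta(x_0) = 1$ shown there. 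Thus the coefficients $M_\Delta(x_0)$ form a convex weighting, and $Q'_\mathbb{E}(x_0) = 0$ rewrites as
$$
-c^\top = \sum_{\Delta \in D} M_\Delta(x_0)\,\Delta \in \mathrm{conv}\, D,
$$
which contradicts the standing hypothesis $-c^\top \notin \mathrm{conv}\, D$. The contradiction shows that no point of $\mathrm{int}\, X$ can be a local minimizer, i.e. every local minimizer of \eqref{ExpModel} lies in $X \setminus \mathrm{int}\, X$.

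The argument is essentially immediate once the two cited results are in place, so I do not expect a genuine obstacle. The only point requiring care is that the gradient is exactly $c^\top$ plus a \emph{convex} combination of the elements of $D$; this relies on the normalization $\sum_{\Delta \in D} M_\Delta(x_0) = 1$ and the nonnegativity of the $M_\Delta(x_0)$, both of which are already furnished by the proof of Theorem \ref{PropExpDiff} (via $\mu_Z[\mathcal{N}_{x_0}] = 0$ and the partition of $\mathrm{supp}\,\mu_Z$ into the sets $\mathcal{W}(x_0,\Delta)$). Without this convex-combination structure one could only conclude membership of $-c^\top$ in the affine or conic hull of $D$, so it is crucial to use the probabilistic interpretation of the weights.
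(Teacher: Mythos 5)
Your proposal is correct and follows essentially the same route as the paper: apply Corollary \ref{CorDensity} and Proposition \ref{PropNecessaryCond} to get $Q'_{\mathbb{E}}(x_0)=0$ from $\mathcal{D}(x_0,X)=\mathbb{R}^n$, then use the representation $Q'_{\mathbb{E}}(x_0)=c^\top+\sum_{\Delta\in D}\mu_Z[\mathcal{W}(x_0,\Delta)]\Delta$ from the proof of Theorem \ref{PropExpDiff} to place $-c^\top$ in $\mathrm{conv}\,D$ and obtain a contradiction. Your explicit justification that the weights $M_\Delta(x_0)$ are nonnegative and sum to one is a welcome elaboration of a step the paper leaves implicit, but it is not a different argument.
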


\medskip

\begin{proof}
Suppose that $x_0 \in \mathrm{int} \; X$ is a local minimizer of \eqref{ELB}, then Corollary \ref{CorDensity} and Proposition \ref{PropNecessaryCond} yield $0 = Q_\mathbb{E}'(x_0)$ as $\mathcal{D}(x_0,X) = \mathbb{R}^n$. Invoking the proof of Theorem \ref{PropExpDiff} we have
$$
0 = Q_\mathbb{E}'(x_0) = c^\top + \sum_{\Delta \in D} \mu_Z[\mathcal{W}(x_0,\Delta)]\Delta
$$
and thus $-c^\top \in \mathrm{conv} \; D$, which contradicts the assumptions.
\end{proof}

\section{A stability result for bilevel stochastic linear problems} \label{SecStability}

The aim of this section is to establish a qualitative stability result for the bilevel stochastic linear problem \eqref{ELB} with respect to perturbations of the underlying probability measure. Taking into account that the support of the perturbed measure may differ from the original support, we shall assume that $X \times \mathbb{R}^s \subseteq F$ to ensure that the objective function of \eqref{ELB} remains well defined.

\medskip

Throughout this section, we shall consider the general case where $\mathcal{R}$ is law-invariant and there exists some $p \geq 1$ such that the restriction $\mathcal{R}|_{L^p(\Omega, \mathcal{F}, \mathbb{P})}$ is a real-valued convex risk measure. Furthermore, for the sake of notational simplicity, we assume that the probability space $(\Omega, \mathcal{F}, \mathbb{P})$ is atomless (cf. Remark \ref{RemAtomless} below). Then for any $x \in X$ and $\mu \in \mathcal{M}^p_s$, Lemma \ref{LemmaF} implies $(\delta_x \otimes \mu) \circ f^{-1} \in \mathcal{M}^p_1$ and the atomlessness ensures that there exists some $Y_{(x,\mu)} \in L^p(\Omega, \mathcal{F}, \mathbb{P})$ such that $\mathbb{P} \circ Y_{(x,\mu)}^{-1} = (\delta_x \otimes \mu) \circ f^{-1}$. Thus, we may consider the mapping $\mathcal{Q}_\mathcal{R}: X \times \mathcal{M}^p_s \to \mathbb{R}$, $$
\mathcal{Q}_\mathcal{R}(x,\mu) := \mathcal{R}[Y_{(x,\mu)}].
$$
Note that the specific choice of $Y_{(x,\mu)}$ does not matter due to the law-invariance of $\mathcal{R}$.

\medskip

\begin{rem} \label{RemAtomless}
The assumption that $(\Omega, \mathcal{F}, \mathbb{P})$ is atomless does not entail a loss of generality: We may just fix an arbitrary atomless probability space $(\overline{\Omega}, \overline{\mathcal{F}}, \overline{\mathcal{P}})$, consider a law-invariant convex risk-measure $\overline{\mathcal{R}}: L^p(\overline{\Omega}, \overline{\mathcal{F}}, \overline{\mathcal{P}}) \to \mathbb{R}$ and define an the restriction $\mathcal{R}|_{L^p(\Omega, \mathcal{F}, \mathbb{P})}$ via $\mathcal{R}[Y] = \overline{\mathcal{R}}[\overline{Y}]$, where $\overline{Y}$ is an arbitrary random variable in $L^p(\overline{\Omega}, \overline{\mathcal{F}}, \overline{\mathcal{P}})$ satisfying $\overline{\mathbb{P}} \circ \overline{Y}^{-1} = \mathbb{P} \circ Y^{-1}$.
\end{rem}

\medskip

Consider the parametric optimization problem
\leqnomode
\begin{equation}
\label{ParametricProblem}
\tag{$\mathrm{P}_\mu$}
\min_x \lbrace \mathcal{Q}(x,\mu) \; | \; x \in X \rbrace.
\end{equation}
As \eqref{ParametricProblem} may be nonconvex, we shall pay special attention to sets of local optimal solutions. For any open set $V \subseteq \mathbb{R}^n$ we introduce the optimal value function $\varphi_V: \mathcal{M}^p_s \to \overline{\mathbb{R}}$,
$$
\varphi_V(\mu) := \min_x \lbrace \mathcal{Q}(x,\mu) \; | \; x \in X \cap \mathrm{cl} \; V \rbrace,
$$
as well as the localized optimal solution set mapping $\phi_V: \mathcal{M}^p_s \rightrightarrows \mathbb{R}^n$,
$$
\phi_V(\mu) := \underset{x}{\mathrm{Argmin}} \lbrace \mathcal{Q}(x,\mu) \; | \; x \in X \cap \mathrm{cl} \; V \rbrace.
$$
It is well known that additional assumptions are needed when studying stability of local solutions.

\medskip

\begin{defn}
Given $\mu \in \mathcal{M}^p_s$ and an open set $V \subseteq \mathbb{R}^n$, $\phi_V(\mu)$ is called a \emph{complete local minimizing (CLM) set} of \eqref{ParametricProblem} w.r.t. $V$ if $\emptyset \neq \phi_V(\mu) \subseteq V$.
\end{defn}

\medskip

\begin{rem}
The set of global optimal solutions $\phi_{\mathbb{R}^n}(\mu)$ and any set of isolated minimizers are CLM sets. However, in general, sets of strict local minimizers may fail to be CLM sets (cf. \cite{Robinson1987}).
\end{rem}

\medskip

In the following, we shall equip $\mathcal{P}(\mathbb{R}^s)$ with the topology of weak convergence, i.e. the topology where a sequence $\{\mu_l\}_{l \in \mathbb{N}} \subset \mathcal{P}(\mathbb{R}^s)$ converges weakly to $\mu \in \mathcal{P}(\mathbb{R}^s)$, written $\mu_l \stackrel{w}{\rightarrow} \mu$, iff
\begin{equation*}
	\lim_{l \to \infty} \int_{\mathbb{R}^s} h(t)~\mu_l(dt) = \int_{\mathbb{R}^s} h(t)~\mu(dt)
\end{equation*}
holds for any bounded continuous function $h:\mathbb{R}^s \to \mathbb{R}$ (cf. \cite{Billingsley1968}). The example below shows that even $\varphi_{\mathbb{R}^n}$ may fail to be weakly continuous on the entire space $\mathcal{P}(\mathbb{R}^s)$.

\medskip	
	
\begin{ex}
The problem
\begin{equation*}
\min_x \left\{x + \int_{\mathbb{R}^s} z~\mu(dz) \; | \; 0 \leq x \leq 1 \right\}
\end{equation*}
arises from a bilevel stochastic linear problem, where $\Psi(x,z) = \{z\}$ holds for any $(x,z)$. Assume that $\mu = \mathbb{P} \circ Z^{-1} = \delta_0$ is the Dirac measure at $0$. Then the above problem can be rewritten as $\min_x \{x \; | \; 0 \leq x \leq 1\}$ and its optimal value is $0$.

\medskip

However, while the sequence $\mu_l := (1 - \frac{1}{l}) \delta_{0} + \frac{1}{l} \delta_l$ converges weakly to $\delta_0$, replacing $\mu$ with $\mu_l$ yields the problem
\begin{equation*}
\min_x \left\{x + 1 \; | \; 0 \leq x \leq 1 \right\},
\end{equation*}
whose optimal value is equal to $1$ for any $l \in \mathbb{N}$.
\end{ex}

\medskip

In the present work, we shall follow the approach of \cite{ClausKraetschmerSchultz2017} and confine the stability analysis to locally uniformly $\|\cdot\|^p$-integrating sets.

\medskip

\begin{defn}
A set $\mathcal{M} \subseteq \mathcal{M}^p_s$ is said to be \emph{locally uniformly $\|\cdot\|^p$-integrating} iff for any $\epsilon > 0$ there exists some open neighborhood $\mathcal{N}$ of $\mu$ w.r.t. the topology of weak convergence such that
\begin{equation*}
	\lim_{a \to \infty} \sup_{\nu \in \mathcal{M} \cap \mathcal{N}} \int_{\mathbb{R}^s \setminus a \mathbb{B}} \|z\|^p~\nu(dz) \leq \epsilon.
\end{equation*}
\end{defn}

A detailed discussion of locally uniformly $\|\cdot\|^p$-integrating sets is provided in \cite{FoellmerSchied2011}, \cite{KraetschmerSchiedZaehle2014}, \cite{KraetschmerSchiedZaehle2017}, and \cite{KraetschmerSchiedZaehle2012}. The following examples demonstrate the relevance of the concept.

\medskip

\begin{ex}
\label{ExLocUnifIntSets}
(a) Fix $\kappa, \epsilon > 0$. Then by \cite[Corollary A.47, (c)]{FoellmerSchied2011}, the set
\begin{equation*}
	\mathcal{M}(\kappa, \epsilon) := \left\{ \mu \in \mathcal{P}(\mathbb{R}^s) \; | \; \int_{\mathbb{R}^s} \|z\|^{p + \epsilon} \leq \kappa \right\}
\end{equation*}
of Borel probability measures with uniformly bounded moments of order $p + \epsilon$ is locally uniformly $\|\cdot\|^p$-integrating.

\medskip

(b) Fix any compact set $\Xi \subset \mathbb{R}^s$. By \cite[Corollary A.47, (b)]{FoellmerSchied2011}, the set
\begin{equation*}
	\{\mu \in \mathcal{P}(\mathbb{R}^s) \; | \; \mu[\Xi] = 1\}
\end{equation*}
of Borel probability measures whose support is contained in $\Xi$ is locally uniformly $\|\cdot\|^p$-integrating.

\medskip

(c) Any singleton $\lbrace \mu \rbrace \subset \mathcal{M}^p_s$ is locally uniformly $\|\cdot\|^p$-integrating by \cite[Lemma 5.2]{{KraetschmerSchiedZaehle2017}}.

\end{ex}

\medskip

\begin{theorem} \label{ThStability} Assume $\mathrm{dom} \; f \neq \emptyset$ and $X \times \mathbb{R}^s \subseteq F$. Let $\mathcal{M} \subseteq \mathcal{M}^{p}_{s}$ be locally uniformly $\|\cdot\|^{p}$-integrating, then

\smallskip

\begin{enumerate}
	\item[(a)] $\mathcal{Q}_\mathcal{R}|_{X \times \mathcal{M}}$ is real-valued and weakly continuous.
	
	\smallskip	
	
	\item[(b)] $\varphi_{\mathbb{R}^n}|_\mathcal{M}$ is weakly upper semicontinuous.
\end{enumerate}

\smallskip

In addition, assume that $\mu_0 \in \mathcal{M}$ is such that $\phi_V(\mu_0)$ is a CLM set of $P_{\mu_0}$ w.r.t. some open bounded set $V \subset \mathbb{R}^n$. Then the following statements hold true:

\smallskip

\begin{enumerate}
	\item[(c)] $\varphi_V|_\mathcal{M}$ is weakly continuous at $\mu_0$.
	
	\smallskip
	
	\item[(d)] $\phi_V|_\mathcal{M}$ is weakly Berge upper semicontinuous at $\mu_0$, i.e. for any open set $\mathcal{O} \subseteq \mathbb{R}^{n}$ with $\phi|_V(\mu_0) \subseteq \mathcal{O}$ there exists a weakly open neighborhood $\mathcal{N}$ of $\mu_0$ such that $\phi_V(\mu) \subseteq \mathcal{O}$ for all $\mu \in \mathcal{N} \cap \mathcal{M}$.	

	\smallskip
	
	\item[(e)] There exists some weakly open neighborhood $\mathcal{U}$ of $\mu_0$ such that $\phi_V(\mu)$ is a CLM set for \eqref{ParametricProblem} w.r.t. $V$ for any $\mu \in \mathcal{U} \cap \mathcal{M}$.
\end{enumerate}
\end{theorem}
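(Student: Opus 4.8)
The plan is to treat statement (a) as the analytic core and to deduce (b)--(e) from it together with classical parametric-stability arguments. Throughout I would use that, since $\mathbb{R}^s$ is a separable metric space, the topology of weak convergence on $\mathcal{P}(\mathbb{R}^s)$ is metrizable, so that continuity claims may be verified along sequences.

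For (a) I would first settle real-valuedness. Fix $(x,\mu) \in X \times \mathcal{M}$ and a reference point $z_0$. Since $X \times \mathbb{R}^s \subseteq F$, Lemma \ref{LemmaF} provides a global Lipschitz constant $L$ for $f$ on $F$, whence $|f(x,z)| \leq |f(x,z_0)| + L\|z-z_0\|$; as $\mu \in \mathcal{M}^p_s$ this shows $z \mapsto f(x,z) \in L^p(\mu)$, so $\nu_{(x,\mu)} := (\delta_x \otimes \mu)\circ f^{-1} \in \mathcal{M}^p_1$ and $Y_{(x,\mu)} \in L^p(\Omega,\mathcal{F},\mathbb{P})$, and real-valuedness of $\mathcal{Q}_\mathcal{R}$ follows because $\mathcal{R}|_{L^p}$ is real-valued. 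For weak continuity, let $x_l \to x_0$ in $X$ and $\mu_l \stackrel{w}{\rightarrow} \mu_0$ in $\mathcal{M}$. The Lipschitz estimate of Lemma \ref{LemmaF} gives $\sup_{z}|f(x_l,z)-f(x_0,z)| \leq L\|x_l-x_0\| \to 0$, i.e. $f(x_l,\cdot) \to f(x_0,\cdot)$ uniformly on $\mathbb{R}^s$; combined with $\mu_l \stackrel{w}{\rightarrow}\mu_0$ this yields $\nu_{(x_l,\mu_l)} \stackrel{w}{\rightarrow} \nu_{(x_0,\mu_0)}$. The decisive point is to upgrade this weak convergence to the mode under which a real-valued law-invariant convex risk measure on $L^p$ is continuous, and this is where the hypothesis that $\mathcal{M}$ be locally uniformly $\|\cdot\|^p$-integrating enters: writing $|f(x_l,z)| \leq C + L\|z\|$ with $C := \sup_l |f(x_l,z_0)| < \infty$, the tails obey $\int_{|t|>a}|t|^p\,\nu_{(x_l,\mu_l)}(dt) \leq \int_{L\|z\|>a-C}(C+L\|z\|)^p\,\mu_l(dz)$, so the pushforward family $\{\nu_{(x_l,\mu_l)}\}$ inherits local uniform $\|\cdot\|^p$-integrability on $\mathbb{R}$ from $\mathcal{M}$. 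Weak convergence together with this uniform integrability yields convergence in the sense required by the continuity theory for real-valued law-invariant convex risk measures, and hence $\mathcal{Q}_\mathcal{R}(x_l,\mu_l) \to \mathcal{Q}_\mathcal{R}(x_0,\mu_0)$ (cf. \cite{ClausKraetschmerSchultz2017}). I expect this last implication to be the main obstacle, since it is precisely the place where the interplay of weak convergence, the integrating hypothesis, and the continuity properties of law-invariant risk measures must be made rigorous.

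Statement (b) is then immediate: $\varphi_{\mathbb{R}^n}(\mu) = \inf_{x \in X} \mathcal{Q}_\mathcal{R}(x,\mu)$ is a pointwise infimum of the weakly continuous functions $\mathcal{Q}_\mathcal{R}(x,\cdot)$ and is therefore weakly upper semicontinuous.

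For (c)--(e) I would invoke the classical stability theory of complete local minimizing sets. Since $V$ is bounded and $X$ is closed, $K := X \cap \mathrm{cl}\, V$ is compact, and $\mathcal{Q}_\mathcal{R}$ is jointly weakly continuous on $K \times \mathcal{M}$ by (a). Applying Berge's maximum theorem to the constant (hence continuous) compact-valued feasible-set correspondence $\mu \mapsto K$ and the continuous objective $\mathcal{Q}_\mathcal{R}$ gives at once that $\varphi_V|_\mathcal{M}$ is weakly continuous at $\mu_0$, which is (c), and that $\phi_V|_\mathcal{M}$ is weakly Berge upper semicontinuous with nonempty compact values, which is (d). Finally, for (e) I would use the CLM hypothesis $\emptyset \neq \phi_V(\mu_0) \subseteq V$: applying (d) with the open set $\mathcal{O} := V$ produces a weakly open neighborhood $\mathcal{U}$ of $\mu_0$ with $\phi_V(\mu) \subseteq V$ for all $\mu \in \mathcal{U} \cap \mathcal{M}$, and since $\phi_V(\mu)$ is nonempty by compactness of $K$ and continuity of $\mathcal{Q}_\mathcal{R}(\cdot,\mu)$, we obtain $\emptyset \neq \phi_V(\mu) \subseteq V$, i.e. $\phi_V(\mu)$ is a CLM set of \eqref{ParametricProblem} w.r.t. $V$ for every $\mu \in \mathcal{U} \cap \mathcal{M}$.
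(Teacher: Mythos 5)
Your proposal is correct, and its backbone coincides with the paper's: the decisive input in both cases is the linear growth estimate $|f(x,z)|\leq L\|z\|+L\|x-x_0\|+|f(x_0,0)|$ obtained from the Lipschitz continuity of $f$ on $X\times\mathbb{R}^s$ (Lemma \ref{LemmaF}, using $X\times\mathbb{R}^s\subseteq F$). The difference is one of granularity: the paper stops there and invokes \cite[Corollary 2.4]{ClausKraetschmerSchultz2017}, which delivers all five assertions (a)--(e) in one stroke, whereas you reconstruct the content of that corollary yourself. Your decomposition is sound: the transfer of local uniform $\|\cdot\|^p$-integrability to the pushforward laws $\nu_{(x_l,\mu_l)}$ via the growth bound, followed by the continuity of real-valued law-invariant convex risk measures on $L^p$ under weak convergence plus this uniform integrability (i.e.\ $\psi_p$-weak convergence in the sense of Kr\"atschmer--Schied--Z\"ahle), is exactly the mechanism hidden inside the cited corollary; you correctly identify this as the one step that cannot be made elementary and defer it to the same literature. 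Parts (b)--(e) then follow as you say: (b) as an infimum of weakly continuous functions, (c)--(d) from Berge's maximum theorem applied to the constant compact feasible set $X\cap\mathrm{cl}\,V$, and (e) by feeding $\mathcal{O}=V$ into (d) together with the CLM hypothesis $\emptyset\neq\phi_V(\mu_0)\subseteq V$. Two small remarks: your compactness argument needs $X$ closed, an assumption the paper does not state explicitly but which is equally required for its citation of \cite{ClausKraetschmerSchultz2017} to apply; and in your verification that $\nu_{(x_l,\mu_l)}\stackrel{w}{\rightarrow}\nu_{(x_0,\mu_0)}$ you should test against bounded Lipschitz (rather than merely bounded continuous) functions so that the uniform convergence $f(x_l,\cdot)\to f(x_0,\cdot)$ can actually be exploited; by the portmanteau theorem this suffices. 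Neither point is a gap in substance. What your route buys is transparency about which hypothesis is used where -- in particular, that the CLM assumption is needed only for (e), not for (c) and (d) once $\varphi_V$ is defined as a minimum over $X\cap\mathrm{cl}\,V$; what the paper's route buys is brevity and a uniform treatment that does not require re-deriving the risk-measure continuity theory.
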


\medskip

\begin{proof}
Fix any $x_0 \in X$. By Lemma \ref{LemmaF}, $f$ is Lipschitz continuous on $X \times \mathbb{R}^s$. Thus, there exists a constant $L > 0$ such that
\begin{equation*}
	|f(x,z)| \leq L \|z\| + L\|x-x_0\| + |f(x_0, 0)|
\end{equation*}
and the result follows from \cite[Corollary 2.4.]{ClausKraetschmerSchultz2017}.
\end{proof}

\medskip

\begin{rem}
The assumption $X \times \mathbb{R}^s \subseteq F$ is equivalent to $F = \mathbb{R}^n \times \mathbb{R}^s$ and holds if and only if there is some $y \in \mathbb{R}^m$ such that $Ay < 0$. By Gordan's Theorem (\cite{Gordan1873}), the latter holds iff $u = 0$ is the only nonnegative solution to $A^\top u = 0$. Under this condition, the feasible set of the lower level is full dimensional for any leader's decision $x$ and any parameter $z$.
\end{rem}

\medskip

If the underlying distribution is fixed, the assumptions of Theorem \ref{ThStability} (a) can be weakened significantly.

\medskip

\begin{cor}
\label{CorCont}
Assume $\mathrm{dom} \; f \neq \emptyset$ and $\mu_Z \in \mathcal{M}^p_s$. Then $Q_\mathcal{R}$ is real-valued and continuous on $F_Z$. In addition, assume that $X \subseteq F_Z$ is nonempty and compact, then problem \eqref{ELB} is solvable.
\end{cor}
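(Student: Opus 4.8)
The plan is to factor the objective as a composition $Q_\mathcal{R} = \mathcal{R} \circ g$, where $g \colon F_Z \to L^p(\Omega, \mathcal{F}, \mathbb{P})$ is the map $g(x) := f(x, Z(\cdot))$. The statement then reduces to two facts: (i) $g$ is well defined and Lipschitz continuous with respect to the $L^p$-norm on $F_Z$, and (ii) the restriction $\mathcal{R}|_{L^p(\Omega, \mathcal{F}, \mathbb{P})}$ is continuous in the $L^p$-norm. This is exactly the argument already used for $Q_{\mathrm{CVaR}_\alpha}$ in the proof of Theorem \ref{ThFiniteLipschitz}, now carried out for a general $\mathcal{R}$. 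The reason the hypothesis can be relaxed from $X \times \mathbb{R}^s \subseteq F$ (as in Theorem \ref{ThStability}) to merely $X \subseteq F_Z$ is that the distribution $\mu_Z$ is now fixed: only the behaviour of $f(x, \cdot)$ on the fixed set $\mathrm{supp}\,\mu_Z$ matters, and $x \in F_Z$ guarantees precisely that $(x,z) \in F$ for all $z \in \mathrm{supp}\,\mu_Z$, so that $f(x, Z(\cdot))$ is finite $\mathbb{P}$-almost surely.

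For step (i), I would fix $x \in F_Z$ and some $z_0 \in \mathrm{supp}\,\mu_Z$. Since $(x,z) \in F$ for every $z \in \mathrm{supp}\,\mu_Z$, Lemma \ref{LemmaF} supplies a Lipschitz constant $L$ and the bound $|f(x,z)| \le |f(x,z_0)| + L\|z - z_0\|$. Together with $\mu_Z \in \mathcal{M}^p_s$ this yields $\int_{\mathrm{supp}\,\mu_Z} |f(x,z)|^p \, \mu_Z(dz) < \infty$, so $g(x) \in L^p(\Omega, \mathcal{F}, \mathbb{P})$ (measurability being immediate from the continuity of $f$ and the measurability of $Z$). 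For $x, x' \in F_Z$ the joint Lipschitz estimate of Lemma \ref{LemmaF} gives $|f(x,z) - f(x',z)| \le L\|x - x'\|$ for every $z \in \mathrm{supp}\,\mu_Z$, whence $\|g(x) - g(x')\|_{L^p} \le L\|x - x'\|$; thus $g$ is Lipschitz with respect to the $L^p$-norm.

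Step (ii) is where I expect the real content to lie. Here I would invoke the continuity of finite-valued convex risk measures: a real-valued convex functional on $L^p(\Omega, \mathcal{F}, \mathbb{P})$ that is nondecreasing with respect to the $\mathbb{P}$-almost sure order is automatically norm-continuous (an extended Namioka--Klee type result; cf. the references collected in Section \ref{SecModel} and \cite{ClausKraetschmerSchultz2017}). Composing the $L^p$-Lipschitz map $g$ with the $L^p$-continuous $\mathcal{R}|_{L^p(\Omega, \mathcal{F}, \mathbb{P})}$ shows that $Q_\mathcal{R} = \mathcal{R} \circ g$ is real-valued and continuous on $F_Z$.

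Finally, for solvability, suppose $X \subseteq F_Z$ is nonempty and compact. By the first part, $Q_\mathcal{R}$ is real-valued and continuous on $F_Z$, hence on $X$, so the Weierstrass extreme value theorem guarantees that $Q_\mathcal{R}$ attains its infimum over the nonempty compact set $X$. This is precisely the assertion that \eqref{ELB} is solvable.
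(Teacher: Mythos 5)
Your proof is correct, but it follows a genuinely different route from the paper's. The paper disposes of the corollary in two lines: it observes that the singleton $\lbrace \mu_Z \rbrace$ is locally uniformly $\|\cdot\|^p$-integrating (Example \ref{ExLocUnifIntSets}(c)) and then reruns the proof of Theorem \ref{ThStability}(a), i.e.\ it derives the linear growth bound $|f(x,z)| \leq L\|z\| + L\|x-x_0\| + |f(x_0,0)|$ and delegates everything to the weak-continuity machinery of \cite[Corollary 2.4]{ClausKraetschmerSchultz2017}, so the continuity statement is obtained as the degenerate (fixed-measure) case of a stability result in the weak topology on $\mathcal{M}^p_s$. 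You instead stay entirely in the norm topology of $L^p$: you establish that $g(x) = f(x,Z(\cdot))$ is an $L^p$-valued Lipschitz map on $F_Z$ (which is exactly the computation the paper carries out for $Q_{\mathrm{CVaR}_\alpha}$ in Theorem \ref{ThFiniteLipschitz}, there combined with \cite{Pichler2017}) and then invoke the extended Namioka--Klee theorem, by which a real-valued convex functional on $L^p$ that is nondecreasing for the $\mathbb{P}$-a.s.\ order is automatically norm-continuous. Both steps are sound under the standing assumptions of Section \ref{SecModel}, and your remark explaining why $X \subseteq F_Z$ suffices here while Theorem \ref{ThStability} needs $X \times \mathbb{R}^s \subseteq F$ is exactly the right point. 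What your approach buys is a self-contained argument that does not pass through the theory of locally uniformly integrating sets; what the paper's approach buys is uniformity with Theorem \ref{ThStability}, of which the corollary then really is a corollary. The Weierstrass step for solvability is identical in both.
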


\medskip

\begin{proof}
The set $\lbrace \mu_Z \rbrace$ is locally uniformly $\|\cdot\|^p$-integrating by Example \ref{ExLocUnifIntSets}. Thus, continuity of $Q_\mathcal{R}(\cdot) = \mathcal{Q}_\mathcal{R}(\cdot, \mu_Z)$ can be established as in the proof of Theorem \ref{ThStability} (a) and the solvability of \eqref{ELB} is a direct consequence of the compactness of $X$.
\end{proof}

\medskip

\begin{ex}
(a) The assumptions of Section \ref{SecModel} are fulfilled for the expected excess of order $p \geq 1$ given by
$$
\mathrm{EE}_\eta^p[\cdot] =  \mathbb{E}[\max \lbrace \cdot- \eta, 0\}^p]^{\frac{1}{p}},
$$
where $\eta \in \mathbb{R}$ is a fixed target level (cf. \cite[Example 6.22]{DentchevaRuszczynskiShapiro2014}). Thus, the mapping $Q_{\mathrm{EE}_\eta^p}$ is continuous on $F_Z$ under the assumptions of Corollary \ref{CorCont}.

\medskip

(b) The mean upper semideviation of order $p \geq 1$ given by
$$
\mathrm{SD}_\rho^p[\cdot] = \mathbb{E}[\cdot] + \rho \mathrm{EE}_{E[\cdot]}^p[\cdot]
$$
is a law-invariant coherent risk measure for any $\rho \in [0,1)$ by \cite[Example 6.20]{DentchevaRuszczynskiShapiro2014}. Thus, Corollary \ref{CorCont} gives sufficient conditions for continuity of $Q_{\mathrm{SD}_\rho^p}$.
\end{ex}

\medskip

\begin{rem}
All results of Sections \ref{SecStructure} and \ref{SecStability} can be easily extended to the pessimistic approach to bilevel stochastic linear optimization, where $f$ takes the form
$$
f(x,z) = c^\top x - \min_y \{- q^\top y \; | \; y \in \Psi(x,z)\}.
$$
\end{rem}

As any Borel probability measure is the weak limit of a sequence of measures having finite support, Theorem \ref{ThStability} justifies an approach where the true underlying measure is approximated by a sequence of finite discrete ones.

\section{Finite discrete distributions} \label{SecFiniteDiscreteDistributions}

Throughout this section, we shall assume that the underlying random vector $Z$ is discrete with a finite number of realizations $Z_1, \ldots, Z_K \in \mathbb{R}^s$ and respective probabilities $\pi_1, \ldots, \pi_K \in (0,1]$. Let $I$ denote the index set $\lbrace 1, \ldots, K \rbrace$, then $F_Z$ takes the form
$$
F_Z = \lbrace x \in \mathbb{R}^n \; | \; \forall k \in I \; \exists y \in \mathbb{R}^m: \; Ay \leq Tx + Z_k \rbrace.
$$
Suppose that $x_0 \in X$ is such that $\lbrace y \in \mathbb{R}^m \; | \; Ay \leq Tx_0 + Z_k \rbrace = \emptyset$ holds for some $k \in I$. Then the probability of $f(x_0,Z(\omega)) = \infty$ is a least $\pi_k > 0$, i.e. $x_0$ should be considered as infeasible for problem \eqref{ELB}. Consequently, $X \subseteq F_Z$ can be understood as an induced constraint. Note that $X \cap F_Z$ is a polyhedron if $X$ is a polyhedron.

\medskip

We shall show that for models involving the expectation, the expected excess or the mean upper semideviation, problem \eqref{ELB} can be reduced to a standard bilevel linear program.

\medskip

\begin{theorem}[Expectation] \label{ThFinDiscExp}
Assume $\mathrm{dom} \; f \neq \emptyset$ and let $X \subseteq F_Z$ be a polyhedron, then the risk neutral bilevel stochastic linear problem
\begin{equation*}
\min_x \left\{Q_{\mathbb{E}}(x) \; | \; x \in X \right\}
\end{equation*}
is equivalent to the optimistic bilevel linear program
\begin{equation}
\label{FinDiscrExp}
\min_x \left\{c^\top x + \min_{y_1, \ldots, y_K} \left\{\sum_{k \in I} \pi_k q^\top y_k \; | \; (y_1, \ldots, y_K) \in \Psi_{\mathbb{E}}(x) \right\} \bigg | \; x \in X \right\},
\end{equation}
where $\Psi_{\mathbb{E}}: \mathbb{R}^n \rightrightarrows \mathbb{R}^{Km}$ is given by
\begin{equation*}
\Psi_{\mathbb{E}}(x) :=  \underset{y_1, \ldots, y_K}{\mathrm{Argmin}} \; \left\{\sum_{k \in I} d^\top y_k \; | \; Ay_k \leq Tx + Z_k \; \forall k \in I \right\}.
\end{equation*}
\end{theorem}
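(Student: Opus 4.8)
The plan is to use that, for a finite discrete $Z$, the expectation collapses to the finite convex combination $Q_{\mathbb{E}}(x) = \sum_{k \in I} \pi_k f(x, Z_k)$ and that the lower level problem appearing in \eqref{FinDiscrExp} is fully separable across the scenarios $k \in I$.

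First I would observe that for every $x \in F_Z$ one has $(x, Z_k) \in F$ for all $k \in I$, so that $f(x, Z_k)$ is finite and the inner minimum $\min_y \{q^\top y \mid y \in \Psi(x, Z_k)\}$ is attained, by $\mathrm{dom}\, f \neq \emptyset$ together with Lemma \ref{LemmaF} and the subsequent lemma. Using $\sum_{k \in I} \pi_k = 1$ this yields
$$ Q_{\mathbb{E}}(x) = \sum_{k \in I} \pi_k f(x, Z_k) = c^\top x + \sum_{k \in I} \pi_k \min_y \{q^\top y \mid y \in \Psi(x, Z_k)\}. $$

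The crucial step is the product representation
$$ \Psi_{\mathbb{E}}(x) = \Psi(x, Z_1) \times \cdots \times \Psi(x, Z_K). $$
Because the block variable $y_k$ enters only the $k$-th constraint group $A y_k \leq Tx + Z_k$ and the lower level objective $\sum_{k \in I} d^\top y_k$ is additively separable, minimizing the total objective over the Cartesian product of the per-scenario feasible polyhedra decouples into $K$ independent linear programs; each of these is solvable for $x \in F_Z$, so the joint optimal set is exactly the product of the individual optimal sets $\Psi(x, Z_k)$. Applying the same separability to the leader's inner objective then gives
$$ c^\top x + \min_{(y_1, \ldots, y_K) \in \Psi_{\mathbb{E}}(x)} \sum_{k \in I} \pi_k q^\top y_k = c^\top x + \sum_{k \in I} \pi_k \min_{y_k \in \Psi(x, Z_k)} q^\top y_k = Q_{\mathbb{E}}(x), $$
so the objective of \eqref{FinDiscrExp} coincides with $Q_{\mathbb{E}}$ on $X \subseteq F_Z$. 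Since both problems are minimized over the same set $X$, they share optimal value and optimal solution set, which is the asserted equivalence.

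I expect the main obstacle to lie in rigorously justifying the decoupling identity for $\Psi_{\mathbb{E}}$ — in particular ensuring that the scenario-wise minima of both $d^\top y_k$ over the feasible polyhedron and $q^\top y_k$ over $\Psi(x, Z_k)$ are attained, so that the separable minimizations split cleanly. This is precisely where the hypotheses $\mathrm{dom}\, f \neq \emptyset$ and $X \subseteq F_Z$ are needed, invoked through Lemma \ref{LemmaF} and the lemma following it.
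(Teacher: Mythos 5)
Your proposal is correct and follows essentially the same route as the paper's proof: both reduce $Q_{\mathbb{E}}$ to the scenario-wise sum $c^\top x + \sum_{k \in I} \pi_k \min_{y_k}\{q^\top y_k \mid y_k \in \Psi(x,Z_k)\}$, interchange the weighted sum with the minimization by separability, and conclude via the product representation $\Psi_{\mathbb{E}}(x) = \Psi(x,Z_1) \times \cdots \times \Psi(x,Z_K)$. Your additional remarks on attainment of the scenario-wise minima for $x \in F_Z$ are a harmless (and slightly more careful) elaboration of what the paper leaves implicit.
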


\begin{proof}
We have
\begin{align*}
Q_{\mathbb{E}}(x) &= c^\top x + \sum_{k \in I} \pi_k f(x,Z_k) \\
&= c^\top x + \sum_{k \in I} \pi_k \min_{y_k} \lbrace q^\top y_k \; | \; y_k \in \Psi(x,Z_k) \rbrace \\
&= c^\top x + \min_{y_1, \ldots, y_k} \left\{ \sum_{k \in I} \pi_k q^\top y_k \; | \; y_k \in \Psi(x, Z_k) \; \forall k \in I \right\}
\end{align*}
and the result follows from $\Psi_\mathbb{E}(x) = \Psi(x,Z_1) \times \ldots \times\Psi(x,Z_K)$.
\end{proof}

\medskip

\begin{rem}
The proof of Theorem \ref{ThFinDiscExp} shows that the inner minimization problem in \eqref{FinDiscrExp} can be decomposed into $K$ problems of similar structure.
\end{rem}

\medskip

\begin{theorem}[Expected excess] \label{ThFinDiscExpExcess}
Assume $\mathrm{dom} \; f \neq \emptyset$ and let $X \subseteq F_Z$ be a polyhedron, then for any $\eta \in \mathbb{R}$, the risk-averse bilevel stochastic linear problem
\begin{equation*}
\min_x \left\{Q_{\text{EE}_{\eta}}(x) \; | \; x \in X \right\}
\end{equation*}
is equivalent to the optimistic bilevel linear program
\begin{equation}
\label{FinDiscrExpExcess}
\min_x \left\{ \min_{\substack{y_1, \ldots, y_K, \\ v_1, \ldots, v_K}} \left\{ \sum_{k \in I} \pi_k v_k \,| \, (y_1, \ldots, y_K, v_1, \ldots, v_K) \in \Psi_{\mathrm{EE}_{\eta}}(x) \right\} \, \bigg | \, x \in X \right\},
\end{equation}
where $\Psi_{\mathrm{EE}_{\eta}}: \mathbb{R}^n \rightrightarrows \mathbb{R}^{Km + K}$ is given by
{\small
\begin{equation*}
\Psi_{\mathrm{EE}_{\eta}}(x) :=  \underset{\genfrac{}{}{0pt}{}{y_1, \ldots, y_K,}{v_1, \ldots, v_K}} {\mathrm{Argmin}} \left\{ \sum_{k \in I} d^\top y_k \, | \, Ay_k \leq Tx + Z_k, \, v_k \geq 0, \, v_k \geq c^\top x + q^\top y_k - \eta \, \forall k \in I \right\}.
\end{equation*}
}
\end{theorem}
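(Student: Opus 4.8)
The plan is to mirror the proof of Theorem \ref{ThFinDiscExp} and show that, for every fixed $x \in X$, the value $Q_{\mathrm{EE}_\eta}(x)$ equals the optimal value of the inner optimistic bilevel problem appearing in \eqref{FinDiscrExpExcess}; since both problems are minimized over the same feasible set $X$, pointwise equality of these objectives yields the asserted equivalence. First I would rewrite $Q_{\mathrm{EE}_\eta}(x) = \sum_{k \in I} \pi_k \max\{f(x, Z_k) - \eta, 0\}$ using law-invariance of $\mathrm{EE}_\eta$ together with the finite discrete structure of $Z$, noting that $f(x, Z_k)$ is finite because $\min_{y_k}\{q^\top y_k \mid y_k \in \Psi(x, Z_k)\}$ is attained for every $k$ by the lemma following Lemma \ref{LemmaF}, as $(x, Z_k) \in F$ for all $x \in X \subseteq F_Z$.

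The key structural observation is that the auxiliary variables $v_1, \ldots, v_K$ do not enter the lower-level objective $\sum_{k \in I} d^\top y_k$ defining $\Psi_{\mathrm{EE}_\eta}(x)$. Since this objective and the constraints $Ay_k \leq Tx + Z_k$ decouple across $k$, the $y$-components of any point of $\Psi_{\mathrm{EE}_\eta}(x)$ satisfy $y_k \in \Psi(x, Z_k)$, while each $v_k$ is constrained only by $v_k \geq 0$ and $v_k \geq c^\top x + q^\top y_k - \eta$, that is, $v_k \geq \max\{c^\top x + q^\top y_k - \eta, 0\}$. Because these bounds are independent across $k$, the presence of the $v_k$-constraints never restricts the admissible $y$, so $\Psi_{\mathrm{EE}_\eta}(x)$ consists exactly of the tuples with $y_k \in \Psi(x, Z_k)$ and $v_k \geq \max\{c^\top x + q^\top y_k - \eta, 0\}$ for all $k \in I$.

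Next I would evaluate the optimistic inner value. As every $\pi_k > 0$, minimizing $\sum_{k \in I} \pi_k v_k$ over this set forces each $v_k$ down to its lower bound $\max\{c^\top x + q^\top y_k - \eta, 0\}$, leaving $\min_{y_k \in \Psi(x, Z_k)} \sum_{k \in I} \pi_k \max\{c^\top x + q^\top y_k - \eta, 0\}$, which separates into $\sum_{k \in I} \pi_k \min_{y_k \in \Psi(x, Z_k)} \max\{c^\top x + q^\top y_k - \eta, 0\}$. The only step that requires genuine care is the interchange of this inner minimization with the monotone transformation $t \mapsto \max\{t - \eta, 0\}$: since that map is nondecreasing and the minimum of $y_k \mapsto c^\top x + q^\top y_k$ over $\Psi(x, Z_k)$ is attained, one obtains $\min_{y_k \in \Psi(x, Z_k)} \max\{c^\top x + q^\top y_k - \eta, 0\} = \max\{f(x, Z_k) - \eta, 0\}$. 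Summing over $k \in I$ recovers $Q_{\mathrm{EE}_\eta}(x)$, which establishes the equivalence. I expect this last monotonicity-and-attainment argument to be the main (though mild) obstacle, as it is exactly the point where the order of the follower's choice and the excess operation matters.
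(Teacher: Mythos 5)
Your proposal is correct and follows essentially the same route as the paper's proof: both rest on the epigraph representation $\max\{t-\eta,0\}=\min\{v \mid v\geq 0,\ v\geq t-\eta\}$, the scenariowise decoupling, and the observation that the follower's minimization over $\Psi(x,Z_k)$ commutes with the nondecreasing excess map because the lower-level optimal value is attained. Your explicit characterization of $\Psi_{\mathrm{EE}_\eta}(x)$ as the set of tuples with $y_k\in\Psi(x,Z_k)$ and $v_k\geq\max\{c^\top x+q^\top y_k-\eta,0\}$ merely spells out a step the paper leaves implicit in its final equality.
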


\begin{proof}
We have $Q_{\text{EE}_{\eta}}(x) = \sum_{k \in I} \pi_k g_k(x)$, where
\begin{align*}
g_k(x) &= \max\{0, f(x, Z_k) - \eta\} \\
&= \min_{v_k} \left\{ v_k \; | \; v_k \geq 0, \; v_k \geq f(x, Z_k) - \eta \right\} \\
&= \min_{v_k} \Big\{ v_k \; \Big| \; v_k \geq 0, \; v_k \geq c^\top x + \min_{y_k} \{ q^\top y_k \; | \; y_k \in \Psi(x, Z_k)\} - \eta \Big\} \\
&= \min_{y_k, v_k} \left\{ v_k \; | \; v_k \geq 0, \; v_k \geq c^\top x + q^\top y_k - \eta,\; y_k \in \Psi(x, Z_k) \right\}
\end{align*}
holds for any $(x,k) \in X \times I$. Thus,
\begin{align*}
&Q_{\text{EE}_{\eta}}(x) = \sum_{k=1}^K \pi_k \inf_{y_k, v_k} \left\{ v_k \; | \; v_k \geq 0, \; v_k \geq c^\top x + q^\top y_k - \eta, \; y_k \in \Psi(x,Z_k) \right\} \\
&= \inf_{\genfrac{}{}{0pt}{}{y_1, \ldots, y_K,}{v_1, \ldots, v_K}} \bigg\{ \sum_{k=1}^K \pi_k v_k \; | \; v_k \geq 0, \; v_k \geq c^\top x + q^\top y_k - \eta,\; y_k \in \Psi(x,Z_k) \; \forall k \in I \bigg\} \\
&= \inf_{\genfrac{}{}{0pt}{}{y_1, \ldots, y_K,}{v_1, \ldots, v_K}} \bigg\{\sum_{k=1}^K \pi_k v_k \; | \; (y_1, \ldots, y_K, v_1, \ldots, v_K) \in \Psi_{\text{EE}_{\eta}}(x) \bigg\},
\end{align*}
which completes the proof.
\end{proof}

\medskip

\begin{rem}
Let $\Psi_{\mathrm{EE}_{\eta, k}}: X \rightrightarrows \mathbb{R}^{m +1}$ be given by
$$
\Psi_{\mathrm{EE}_{\eta, k}}(x) := \underset{y_k, v_k}{\mathrm{Argmin}} \left\{ d^\top y_k \, | \, Ay_k \leq Tx + Z_k, \, v_k \geq 0, \, v_k \geq c^\top x + q^\top y_k - \eta \right\},
$$
then $\Psi_{\mathrm{EE}_{\eta}}(x)$ admits the representation
$$
\Psi_{\mathrm{EE}_{\eta}}(x) = \lbrace (y_1, \ldots, y_K, v_1, \ldots, v_K) \; | \; (y_k,v_k) \in \Psi_{\mathrm{EE}_{\eta, k}}(x) \; \forall k \in I \rbrace.
$$
Thus, the inner minimization problem in \eqref{FinDiscrExpExcess} decomposes into $K$ problems of similar structure.
\end{rem}

\medskip

\begin{theorem}[Mean upper semideviation] \label{ThFinDiscSD}
Assume $\mathrm{dom} \; f \neq \emptyset$ and let $X \subseteq F_Z$ be a polyhedron, then for any $\rho \in [0,1)$, the risk-averse bilevel stochastic linear problem
\begin{equation*}
\min_x \left\{Q_{\mathrm{SD}_{\rho}}(x) \; | \; x \in X \right\}
\end{equation*}
is equivalent to the optimistic bilevel linear program
\begin{equation}
\label{FinDiscrSD}
\min_x \left\{ c^\top x + \min_{\genfrac{}{}{0pt}{}{y_1, \ldots, y_K,}{v_1, \ldots, v_K}} \left\{ \begin{aligned} &(1 - \rho) \sum_{k \in I} \pi_k q^\top y_k + \rho \sum_{k \in I} \pi_k v_k \\ s.t. \; &(y_1, \ldots, y_K, v_1, \ldots, v_K) \in \Psi_{\mathrm{SD}_\rho}(x) \; \end{aligned} \right\} \; | \; x \in X \right\},
\end{equation}
where $\Psi_{\mathrm{SD}_\rho}: \mathbb{R}^n \rightrightarrows \mathbb{R}^{Km + K}$ is given by
{\small
$$
\Psi_{\mathrm{SD}_\rho}(x) := \underset{\genfrac{}{}{0pt}{}{y_1, \ldots, y_K,}{v_1, \ldots, v_K}}{\mathrm{Argmin}} \left\{ \sum_{k \in I} d^\top y_k \; | \; Ay_k \leq Tx + Z_k, \; v_k \geq q^\top y_k, \; v_k \geq \sum_{j \in I} \pi_j q^\top y_j \; \forall k \in I \right\}.
$$
}
\end{theorem}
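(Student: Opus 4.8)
The plan is to show that for every fixed $x \in X$ the inner (follower plus optimistic leader) optimal value of \eqref{FinDiscrSD} coincides with $Q_{\mathrm{SD}_\rho}(x)$; since both problems share the feasible set $X$, this yields the asserted equivalence. First I would make the objective explicit. Writing $w_k := \min_{y_k}\{q^\top y_k \mid y_k \in \Psi(x,Z_k)\}$, so that $f(x,Z_k) = c^\top x + w_k$, the value $w_k$ is attained by some $y_k \in \Psi(x,Z_k)$ by Lemma \ref{LemmaF} and the solvability of the lower level on $F_Z$. Since the $c^\top x$ cancels in the deviation term, one obtains
$$Q_{\mathrm{SD}_\rho}(x) = c^\top x + \sum_{k \in I} \pi_k w_k + \rho \sum_{k \in I} \pi_k \Big( w_k - \sum_{j \in I} \pi_j w_j \Big)_+.$$

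Next I would analyze $\Psi_{\mathrm{SD}_\rho}(x)$. As the lower level objective $\sum_k d^\top y_k$ does not involve $v$, and the constraints $v_k \geq q^\top y_k$, $v_k \geq \sum_j \pi_j q^\top y_j$ can be met by choosing $v$ large for any feasible $y$, the $y$-projection of $\Psi_{\mathrm{SD}_\rho}(x)$ equals $\Psi(x,Z_1) \times \cdots \times \Psi(x,Z_K)$, exactly as in the proof of Theorem \ref{ThFinDiscExp}. For fixed $y$ the optimistic leader minimizes $\rho \sum_k \pi_k v_k$ over $v_k \geq \max\{q^\top y_k, \sum_j \pi_j q^\top y_j\}$, so the optimal $v_k$ equals this maximum. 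Substituting and using $\sum_k \pi_k = 1$, the upper level objective on $\Psi_{\mathrm{SD}_\rho}(x)$ collapses to
$$c^\top x + S_y + \rho \sum_{k \in I} \pi_k (q^\top y_k - S_y)_+, \qquad S_y := \sum_{j \in I} \pi_j q^\top y_j,$$
which is precisely $c^\top x + \mathrm{SD}_\rho[\hat Y(y)]$ for the random variable $\hat Y(y)$ attaining $q^\top y_k$ with probability $\pi_k$.

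It then remains to minimize $G(y) := \mathrm{SD}_\rho[\hat Y(y)]$ over $\prod_{k} \Psi(x,Z_k)$, and I expect this to be the main obstacle, since the coupling constraint $v_k \geq \sum_j \pi_j q^\top y_j$ destroys the scenario-wise decomposition available for $Q_{\mathbb{E}}$ and $Q_{\mathrm{EE}_\eta}$: reducing $q^\top y_k$ in one scenario lowers the mean $S_y$ and may therefore raise the deviation contributions of the other scenarios, so it is not a priori clear that the scenario-wise optimistic choices are jointly optimal. Here I would invoke the monotonicity of $\mathrm{SD}_\rho$ with respect to the $\mathbb{P}$-almost sure order assumed in Section \ref{SecModel}: for any $y \in \prod_k \Psi(x,Z_k)$ one has $q^\top y_k \geq w_k$ for all $k$, hence $\hat Y(y^\ast) \leq \hat Y(y)$ almost surely for any $y^\ast$ with $q^\top y^\ast_k = w_k$, and thus $G(y^\ast) \leq G(y)$. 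Consequently the minimum of $G$ is attained at the scenario-wise optimistic solutions and equals $\sum_k \pi_k w_k + \rho \sum_k \pi_k (w_k - \sum_j \pi_j w_j)_+$; adding $c^\top x$ reproduces the formula for $Q_{\mathrm{SD}_\rho}(x)$ above.

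If one prefers an argument that makes the role of $\rho < 1$ explicit rather than appealing to the abstract monotonicity assumption, the same conclusion follows by noting that $G$ is convex and piecewise linear and computing its one-sided partial derivatives, which satisfy
$$\frac{\partial G}{\partial (q^\top y_k)} = \pi_k\Big(1 + \rho\, \mathbf{1}_{\{q^\top y_k > S_y\}} - \rho \sum_{j \in I} \pi_j\, \mathbf{1}_{\{q^\top y_j > S_y\}}\Big) > 0,$$
because $\sum_j \pi_j \mathbf{1}_{\{\cdot\}} \leq 1$ forces $1 - \rho \sum_j \pi_j \mathbf{1}_{\{\cdot\}} \geq 1 - \rho > 0$. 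Hence $G$ is strictly increasing in each $q^\top y_k$, so its minimum over $\prod_k \Psi(x,Z_k)$ is again attained where each $q^\top y_k = w_k$, which completes the proof.
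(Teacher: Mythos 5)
Your proof is correct, and its skeleton --- replacing each $\max\{\cdot,\cdot\}$ by an auxiliary variable $v_k$ bounded below by both arguments, and observing that the $y$-projection of $\Psi_{\mathrm{SD}_\rho}(x)$ is exactly $\Psi(x,Z_1)\times\cdots\times\Psi(x,Z_K)$ while $v$ ranges freely above the induced bounds --- coincides with the paper's. Where you genuinely add something is at the step you rightly single out as the main obstacle: the paper's proof ends in a single unexplained equality that merges the scenario-wise minimizations $w_k:=\min_{y_k}\{q^\top y_k \mid y_k\in\Psi(x,Z_k)\}$ into one joint minimization over $(y,v)$ with the coupled constraint $v_k\ge\sum_j\pi_j q^\top y_j$, and it is precisely there that one must argue that the scenario-wise optimistic choices remain jointly optimal. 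Both of your justifications are valid: the appeal to the a.s.-monotonicity of $\mathrm{SD}_\rho$ assumed in Section \ref{SecModel} is the cleanest, and the derivative computation correctly isolates $1-\rho>0$ as the quantitative reason. For completeness, an even more elementary termwise bound gives the same conclusion and explains why the particular split of the objective matters: for any $y\in\prod_k\Psi(x,Z_k)$ and any feasible $v$ one has $q^\top y_k\ge w_k$ and hence $v_k\ge\max\{q^\top y_k,\,\sum_j\pi_j q^\top y_j\}\ge\max\{w_k,\bar w\}$ with $\bar w=\sum_j\pi_j w_j$, so $(1-\rho)\sum_k\pi_k q^\top y_k\ge(1-\rho)\bar w$ and $\rho\sum_k\pi_k v_k\ge\rho\sum_k\pi_k\max\{w_k,\bar w\}$ hold separately; summing and adding $c^\top x$ recovers $Q_{\mathrm{SD}_\rho}(x)$. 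This shows the coupled inner problem is bounded below term by term by its value at the scenario-wise optimum, using only $0\le\rho\le 1$, and is presumably the argument the paper leaves implicit.
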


\begin{proof}
By
\begin{align*}
Q_{\mathrm{SD}_\rho}(x) &= Q_\mathbb{E}(x) + \rho Q_{\mathrm{EE}_{Q_\mathbb{E}(x)}}(x) \\
&= Q_\mathbb{E}(x) + \rho c^\top x + \rho \sum_{k \in I} \pi_k \max \left\{0, f(x,Z_k) - Q_\mathbb{E}(x)\right\} \\
&= (1 - \rho)Q_\mathbb{E}(x) + \rho c^\top x + \rho \sum_{k \in I} \pi_k \max \left\{f(x,Z_k), Q_\mathbb{E}(x)\right\} \\
&= (1 - \rho)Q_\mathbb{E}(x) + \rho c^\top x + \rho \sum_{k \in I} \pi_k \min_{v_k} \lbrace v_k \; | \; v_k \geq f(x,Z_k), \; v_k \geq Q_\mathbb{E}(x) \rbrace
\end{align*}
and the representation of $Q_\mathbb{E}$ that was established in the proof of Theorem \ref{ThFinDiscExp}, we have
{\small
\begin{align*}
&Q_{\mathrm{SD}_\rho}(x) = c^\top x + (1 - p) \sum_{k \in I}\pi_k \min_{y_k} \left\{q^\top y_k \; | \; y_k \in \Psi(x, Z_k) \; \forall k \in I \right\} \\
&\textcolor{white}{Q_{\mathrm{SD}_\rho}(x) =} + \min_{v_1, \ldots, v_K} \left\{ \rho \sum_{k \in I} \pi_k v_k \; \Bigg{|} \; \begin{aligned} &v_k \geq \min_{y_k} \lbrace q^\top y_k \; | \; y_k \in \Psi(x,Z_k) \rbrace, \\ &v_k \geq \sum_{k \in I}\pi_k \min_{y_k} \left\{q^\top y_k \; | \; y_k \in \Psi(x, Z_k) \right\} \end{aligned} \right\} \\
&= c^\top x + \min_{\substack{y_1, \ldots, y_K, \\ v_1, \ldots, v_K}} \left\{ (1 - \rho) \sum_{k \in I} \pi_k q^\top y_k + \rho \sum_{k \in I} \pi_k v_k \, \Bigg{|} \, \begin{aligned} &y_k \in \Psi(x,Z_k), \, v_k \geq q^\top y_k &\forall k \in I \\ &v_k \geq \sum_{j \in I} \pi_j q^\top y_j &\forall k \in I \end{aligned} \right\},
\end{align*}
}
which completes the proof.
\end{proof}

\medskip

\begin{rem}
The inner minimization problem in \eqref{FinDiscrSD} does not decompose scenariowise due to the $K$ coupling constraints $v_k \geq \sum_{j \in I} \pi_j q^\top y_j$ for $k \in I$ in the description of $Q_{\mathrm{SD}_\rho}(x)$.
\end{rem}

\medskip

Finally, we shall consider models involving the Conditional Value at Risk.

\medskip

\begin{theorem}[Conditional Value at Risk]
Assume $\mathrm{dom} \; f \neq \emptyset$ and let $X \subseteq F_Z$ be a polyhedron, then for any $\alpha \in (0,1)$, the risk-averse bilevel stochastic linear problem
\begin{equation*}
\min_x \left\{Q_{\mathrm{CVaR}_\alpha}(x) \; | \; x \in X \right\}
\end{equation*}
is equivalent to
{\small
\begin{equation}
\label{FinDiscrCVaR}
\min_x \left\{ \min_{\eta \in \mathbb{R}} \left\{ \eta + \min_{\substack{y_1, \ldots, y_K, \\ v_1, \ldots, v_K}} \left\{ \begin{aligned} &\frac{1}{1- \alpha} \sum_{k \in I} \pi_k v_k \\ s.t. \; &(y_1, \ldots, y_K, v_1, \ldots, v_K) \in \Psi_{\mathrm{EE}_{\eta}}(x) \end{aligned} \right\} \right\} \, \Bigg{|} \, x \in X \right\}.
\end{equation}
}
\end{theorem}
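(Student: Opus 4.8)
The plan is to combine the Rockafellar--Uryasev representation of the Conditional Value at Risk (item 4 of Example \ref{ExConvexRM}) with the expected-excess reformulation already obtained in Theorem \ref{ThFinDiscExpExcess}. Since $\mathrm{CVaR}_\alpha$ is law-invariant and real-valued on $L^1$, and since $f(x,Z(\cdot))$ is a finitely-valued, hence integrable, random variable for every $x \in X \subseteq F_Z$ (by Theorem \ref{ThFiniteLipschitz}), applying the identity $\mathrm{CVaR}_\alpha[\cdot] = \min_{\eta}\{\eta + \frac{1}{1-\alpha}\mathrm{EE}_\eta[\cdot]\}$ to the random variable $f(x,Z(\cdot))$ yields, for each fixed $x \in X$, the pointwise representation $Q_{\mathrm{CVaR}_\alpha}(x) = \min_{\eta \in \mathbb{R}}\{\eta + \frac{1}{1-\alpha}Q_{\mathrm{EE}_\eta}(x)\}$, in which the minimum over $\eta$ is finite and attained.

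The central step is then to fix $x \in X$ together with an arbitrary target level $\eta \in \mathbb{R}$ and invoke Theorem \ref{ThFinDiscExpExcess} to replace $Q_{\mathrm{EE}_\eta}(x)$ by the inner bilevel minimum $\min_{y_1,\ldots,v_K}\{\sum_{k \in I}\pi_k v_k \mid (y_1,\ldots,v_K) \in \Psi_{\mathrm{EE}_\eta}(x)\}$. The key observation is that this identity holds verbatim for \emph{every} value of $\eta$, which is precisely what permits $\eta$ to be carried along as a free outer variable. Substituting the reformulation into the representation from the first step gives
$$
Q_{\mathrm{CVaR}_\alpha}(x) = \min_{\eta \in \mathbb{R}}\left\{\eta + \frac{1}{1-\alpha}\min_{y_1,\ldots,v_K}\left\{\sum_{k\in I}\pi_k v_k \;\Big|\; (y_1,\ldots,v_K)\in\Psi_{\mathrm{EE}_\eta}(x)\right\}\right\},
$$
which is exactly the inner objective appearing in \eqref{FinDiscrCVaR}. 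Since the two objectives coincide at every $x \in X$, minimizing over $x \in X$ establishes the asserted equivalence; the corresponding solution $x^\ast$ of the $\mathrm{CVaR}_\alpha$ model is recovered from an optimal triple $(x^\ast, \eta^\ast, (y,v))$ of \eqref{FinDiscrCVaR}, with $\eta^\ast$ a Value at Risk of $f(x^\ast,Z(\cdot))$ and $(y,v)$ optimal for the lower level.

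I expect the essentially algebraic substitution of the second step to be routine, so the main obstacle lies in the careful justification of the first step. Two points require attention: that the Rockafellar--Uryasev identity, which is stated as an equality of functionals on $L^1$, may be evaluated at the specific induced random variable $f(x,Z(\cdot))$, and that all nested minima are finite and attained so that the three-fold minimization in \eqref{FinDiscrCVaR} is well posed. The former is guaranteed by law-invariance together with the integrability of $f(x,Z(\cdot))$; finiteness of the innermost minimum follows from Theorem \ref{ThFinDiscExpExcess}, finiteness and attainment of the $\eta$-minimum from the Rockafellar--Uryasev theorem, and real-valuedness of $Q_{\mathrm{CVaR}_\alpha}$ on $X$ from Corollary \ref{CorCont}. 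Once these are in place the equivalence is immediate, since the two problems share the same objective value at every feasible $x$.
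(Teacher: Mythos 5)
Your proposal is correct and follows essentially the same route as the paper: the paper's proof likewise writes $Q_{\mathrm{CVaR}_\alpha}(x) = \min_{\eta}\{\eta + \frac{1}{1-\alpha}Q_{\mathrm{EE}_\eta}(x)\}$ via the Rockafellar--Uryasev representation and then substitutes the representation of $Q_{\mathrm{EE}_\eta}$ from the proof of Theorem \ref{ThFinDiscExpExcess}. Your additional remarks on finiteness and attainment are sound but not needed beyond what the paper already records.
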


\medskip

\begin{proof}
As
$$
Q_{\mathrm{CVaR}_{\alpha}}(x) = \min_{\eta \in \mathbb{R}} \left\{ \eta + \frac{1}{1 - \alpha} Q_{\mathrm{EE}_\eta}(x) \right\},
$$
the result follows directly from the representation of $Q_{\mathrm{EE}_\eta}(x)$ that was established in the proof Theorem \ref{ThFinDiscExpExcess}.
\end{proof}

\medskip

\begin{rem}
Every evaluation the objective function in \eqref{FinDiscrCVaR} corresponds to solving a bilevel linear problem with scalar upper level variable $\eta$.
\end{rem}

\section{A regularization scheme for bilevel linear problems}

In the setting of Theorems \ref{ThFinDiscExp}, \ref{ThFinDiscExpExcess} and \ref{ThFinDiscSD}, the risk-averse bilevel stochastic linear problem may be reformulated as a standard optimistic bilevel linear problem of the form
\begin{equation}
\label{GenOptBilevel}
\min_u \lbrace g^\top u + \min_w \lbrace h^\top w \; | \;  w \in \Psi(u) \rbrace \; | \; u \in U \rbrace,
\end{equation}
where $\Psi: \mathbb{R}^k \rightrightarrows \mathbb{R}^l$ is given by
$$
\Psi(u) = \underset{w}{\mathrm{Argmin}} \lbrace t^\top w \; | \; Ww \leq Bu + b \rbrace
$$
for vectors $g \in \mathbb{R}^k$, $h, t \in \mathbb{R}^l$ and $b \in \mathbb{R}^r$, matrices $W \in \mathbb{R}^{r \times l}$ and $B \in \mathbb{R}^{r \times k}$, and a nonempty polyhedron $U \subseteq \mathbb{R}^k$.

\medskip

We shall discuss a solution approach for \eqref{GenOptBilevel} that relies on replacing it with a regularized single level problem involving the Karush-Kuhn-Tucker (KKT) conditions of the lower level problem.

\medskip

\begin{theorem}[{cf. \cite[Theorem 3.7]{Henkel2014}}, \cite{Luderer1983}]
Assume that $\mathrm{Argmin}_w \lbrace t^\top w \; | \; w \in \Psi(u) \rbrace$ is nonempty for any $u \in U$. Then the following statements hold true:

\smallskip

\begin{enumerate}
\item[(a)] The optimal values of \eqref{GenOptBilevel} and
\begin{equation}
\label{OneLevelRef}
\min_{u, w} \lbrace g^\top u + h^\top w \; | \; u \in U, \; w \in \Psi(u) \rbrace
\end{equation}
coincide.

\smallskip
	
\item[(b)] $\overline{u}$ is a global minimizer of \eqref{GenOptBilevel} if and only if there exists some $\overline{w}$ such that $(\overline{u},\overline{w})$ is a global minimizer of \eqref{OneLevelRef}.	

\smallskip

\item[(c)] $\overline{u}$ is a local minimizer of \eqref{GenOptBilevel} if and only if there exists some $\overline{w}$ such that $(\overline{u},\overline{w})$ is a local minimizer of \eqref{OneLevelRef}.
\end{enumerate}
\end{theorem}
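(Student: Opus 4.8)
The plan is to read \eqref{GenOptBilevel} and \eqref{OneLevelRef} as two descriptions of the same value landscape. Write $\Phi(u) := g^\top u + \min_w\{h^\top w \mid w \in \Psi(u)\}$ for the objective of the bilevel problem and $F(u,w) := g^\top u + h^\top w$ for that of the single-level problem, and put $\mathcal{G} := \{(u,w) \mid u \in U,\ w \in \Psi(u)\}$. By the standing solvability assumption the inner minimization is attained for every $u \in U$, so $\Phi$ is finite on $U$ and each $u$ admits an \emph{optimistic selection} $w \in \Psi(u)$ with $F(u,w) = \Phi(u)$. Everything rests on two elementary facts: for every $(u,w) \in \mathcal{G}$ the point $w$ competes in the inner problem, so $F(u,w) \ge \Phi(u)$, and an optimistic selection turns this into equality.

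For (a), let $v^{\mathrm{bl}} := \inf_{u\in U}\Phi(u)$ and $v^{\mathrm{sl}} := \inf\{F(u,w)\mid (u,w)\in\mathcal{G}\}$. The inequality $F(u,w)\ge\Phi(u)\ge v^{\mathrm{bl}}$ on $\mathcal{G}$ yields $v^{\mathrm{sl}}\ge v^{\mathrm{bl}}$, while feeding an optimistic selection for each $u$ into the single-level problem gives $v^{\mathrm{sl}}\le\Phi(u)$ for all $u$, hence $v^{\mathrm{sl}}\le v^{\mathrm{bl}}$; the values coincide. Part (b) is then immediate: if $\bar u$ solves \eqref{GenOptBilevel} globally, pairing it with an optimistic $\bar w$ gives $F(\bar u,\bar w)=\Phi(\bar u)=v^{\mathrm{bl}}=v^{\mathrm{sl}}$; conversely a global minimizer $(\bar u,\bar w)$ of \eqref{OneLevelRef} satisfies $\Phi(\bar u)\le F(\bar u,\bar w)=v^{\mathrm{sl}}=v^{\mathrm{bl}}\le\Phi(\bar u)$, forcing equality.

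The two directions of (c) are of very different character. For ``$\Rightarrow$'' I would take a local minimizer $\bar u$ and an optimistic $\bar w$ at it, and observe that the $w$-coordinate is essentially free: for $(u,w)\in\mathcal{G}$ close to $(\bar u,\bar w)$ (so $u$ close to $\bar u$) the chain $F(u,w)\ge\Phi(u)\ge\Phi(\bar u)=F(\bar u,\bar w)$ shows $(\bar u,\bar w)$ is a local minimizer of \eqref{OneLevelRef}. The converse is the substantial part. I would first invoke the \emph{convexity of $\Psi(\bar u)$}: were $\bar w$ not optimistic, some $\tilde w \in \Psi(\bar u)$ with $h^\top\tilde w < h^\top\bar w$ would give, along the segment $w_\lambda := \bar w + \lambda(\tilde w-\bar w)\in\Psi(\bar u)$, feasible points of \eqref{OneLevelRef} arbitrarily close to $(\bar u,\bar w)$ with strictly smaller $F$, contradicting local optimality. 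Hence $F(\bar u,\bar w)=\Phi(\bar u)$, and it remains to transfer local optimality from $(\bar u,\bar w)$ to $\bar u$.

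For that transfer I would use the piecewise-polyhedral structure: by parametric linear programming theory $\mathcal{G}$ is a finite union of polyhedra $P_1,\dots,P_N$, and since $F$ is affine, local optimality of $(\bar u,\bar w)$ on each $P_i$ containing it upgrades to \emph{global} optimality of $F$ on that $P_i$ (segment argument in the convex set $P_i$). If $\bar u$ failed to be a local minimizer of $\Phi$, there would be $u_k\to\bar u$ in $U$ with optimistic selections $w_k$ and $F(u_k,w_k)=\Phi(u_k)<\Phi(\bar u)$; passing to a subsequence lying in one piece $P_{i_0}$ and using closedness of $\mathrm{proj}_u P_{i_0}$ places $\bar u$ in that projection. \textbf{The main obstacle is exactly here:} one must rule out that the optimistic selections ``escape'' into a piece meeting the fibre over $\bar u$ far from $\bar w$, i.e. that the optimistic value stays $\ge \Phi(\bar u)$ near $\bar u$. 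This is delicate because a lower-level optimal vertex may be optimal only on a lower-dimensional critical region, so a plain upper-semicontinuity and boundedness argument for $\{w_k\}$ delivers only \emph{some} optimistic $\hat w\in\Psi(\bar u)$, not $\hat w=\bar w$, and hence no contradiction with optimality at $(\bar u,\bar w)$ by itself. Closing this gap requires the finer critical-region analysis of the lower-level optimal-set mapping underlying the cited results of Luderer and Henkel, which forces the relevant selections into a piece through $(\bar u,\bar w)$ and thereby completes the contradiction.
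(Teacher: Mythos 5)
Your handling of (a), (b) and of the direction ``$\overline{u}$ local minimizer of \eqref{GenOptBilevel} $\Rightarrow$ $(\overline{u},\overline{w})$ local minimizer of \eqref{OneLevelRef} for an optimistic selection $\overline{w}$'' coincides with the paper's argument, and your reduction of the converse to two sub-steps (first, $\overline{w}$ must itself be optimistic, which you get from convexity of the polyhedron $\Psi(\overline{u})$ exactly as the paper does by noting that a local minimizer of the linear program $\min_w\{h^\top w \mid w\in\Psi(\overline{u})\}$ is global; second, transfer of local optimality from $(\overline{u},\overline{w})$ to $\overline{u}$) is the right decomposition. However, the transfer step is precisely where your proof stops, and the gap you flag is genuine: outer semicontinuity plus boundedness of optimistic selections $w_k$ at points $u_k\to\overline{u}$ with $g^\top u_k+\varphi_o(u_k)<g^\top\overline{u}+\varphi_o(\overline{u})$ only yields a cluster point $\hat{w}$ in the optimistic solution set at $\overline{u}$, which need not equal $\overline{w}$, so the merely local optimality of $(\overline{u},\overline{w})$ is not contradicted.

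What closes this gap in the paper is not a critical-region analysis but the opposite one-sided continuity, in quantitative form: \emph{inner} (lower) semicontinuity of the optimistic solution mapping at $\overline{u}$ relative to $\overline{w}$. The paper sets $M(u):=\mathrm{Argmin}_w\{h^\top w\mid w\in\Psi(u)\}$, represents it as the solution set of a linear inequality system whose right-hand side depends Lipschitz-continuously on $u$ (using Lipschitz continuity of the relevant optimal value functions), and applies the Klatte--Thiere result (Theorem \ref{TheoremKlatteRightHandSide} in the Appendix) to get $d_\infty(M(u_k),M(\overline{u}))\le\Lambda'\|u_k-\overline{u}\|$. Since $\overline{w}\in M(\overline{u})$, this produces $w_k\in M(u_k)$ with $w_k\to\overline{w}$; the pairs $(u_k,w_k)$ are feasible for \eqref{OneLevelRef}, converge to $(\overline{u},\overline{w})$, and satisfy $g^\top u_k+h^\top w_k=g^\top u_k+\varphi_o(u_k)<g^\top\overline{u}+h^\top\overline{w}$, contradicting local optimality. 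This Hausdorff--Lipschitz (or at least inner-semicontinuity) property of the ``argmin of the argmin'' mapping is the single ingredient your proposal is missing; the piecewise-polyhedral decomposition of $\mathrm{gph}\,\Psi$ you suggest is not needed for this theorem and is used by the paper only later, in the proof of Theorem \ref{TheoremLinearScholtes}.
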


\medskip

\begin{proof}
By assumption, the mapping $\varphi_o: U \to \mathbb{R}$
$$
\varphi_o(u) := \min_w \lbrace h^\top w \; | \; w \in \Psi(u) \rbrace
$$
is well-defined and for any $\tilde{u} \in U$ there exists some $\tilde{w} \in \Psi(\tilde{u})$ such that $h^\top \tilde{w} = \varphi_o(\tilde{u})$. Furthermore, $\varphi_o(\tilde{u}) \leq h^\top w$ holds for any $w \in \Psi(\tilde{u})$, which implies (a), (b) and the "if" part of (c).

\medskip

To show the "only if" part of (c), suppose that $(\overline{u},\overline{w})$ is a local minimizer of \eqref{OneLevelRef}. Then there exist some $\epsilon > 0$ such that
\begin{equation}
\label{ProofOptEquiv1}
g^\top u + h^\top w \geq g^\top \overline{u} + h^\top \overline{w}
\end{equation}
holds for any $(u,w) \in B_\epsilon(\overline{u},\overline{w})$ satisfying $u \in U$ and $w \in \Psi(u)$. In particular, we have $g^\top \overline{u} + h^\top w \geq g^\top \overline{u} + h^\top \overline{w}$ for any $w \in \Psi(\overline{u}) \cap B_\epsilon(\overline{w})$, which implies that $\overline{w}$ is a local and thus global minimizer of the linear program
$$
\min_w \lbrace h^\top w \; | \; w \in \Psi(\overline{u}) \rbrace.
$$
Consider the mapping $M: U \rightrightarrows \mathbb{R}^l$ defined by
$$
M(u) := \underset{w}{\mathrm{Argmin}} \lbrace h^\top w \; | \; w \in \Psi(u) \rbrace = \lbrace w \; | \; Ww \leq Bu + b, \; h^\top w \leq \varphi_o(u) \rbrace.
$$
As $\varphi_o$ is Lipschitz continuous by Theorem \ref{TheoremKlatteRightHandSide} in the Appendix, Lipschitz continuity of $M$ follows from the same result. Suppose that $\overline{u}$ is not a local minimizer of \eqref{GenOptBilevel}, then there exist a sequence $\lbrace u_n \rbrace_{n \in \mathbb{N}}$ such that  and $u_n \in U$ and
\begin{equation}
\label{ProofOptEquiv2}
g^\top u_n + \varphi_o(u_n) < g^\top \overline{u} + \varphi_o(\overline{u})
\end{equation}
hold for any $n \in \mathbb{N}$ and we have $\lim_{n \to \infty} u_n = u$. The Lipschitz continuity of $M$ and $\overline{w} \in M(\overline{u})$ imply
$$
\lim_{n \to \infty} \; \inf_{w \in M(u_n)} \|w - \overline{w} \| = 0.
$$
Thus, there exists a sequence $\lbrace w_n \rbrace_{n \in \mathbb{N}}$ satisfying $\lim_{n \to \infty} w_n = \overline{w}$ and $w_n \in M(u_n)$ for all $n \in \mathbb{N}$. Consequently, by \eqref{ProofOptEquiv2}, there is some $N \in \mathbb{N}$ such that for any $n \geq N$, we have $(u_n,w_n) \in B_\epsilon(\overline{u},\overline{w})$ and
$$
g^\top u_n + h^\top w_n = g^\top u_n + \varphi_o(u_n) < g^\top \overline{u} + \varphi_o(\overline{u}) = g^\top \overline{u} + h^\top \overline{w},
$$
which contradicts \eqref{ProofOptEquiv1}. Thus, $\overline{u}$ is a local minimizer of \eqref{GenOptBilevel}.
\end{proof}

\medskip

Next, we use the KKT conditions of the lower level problem to replace \eqref{OneLevelRef} with the single-level problem
\begin{equation}
\label{KKTReform}
\min_{u, w, v} \left\{ g^\top u + h^\top w \; \Bigg| \; \begin{aligned}  &Ww \leq Bu + b, \; W^\top v = t, \; v \leq 0, \\ &v^\top (Ww - Bu - b) = 0, \; u \in U \end{aligned} \right\}.
\end{equation}
The relationship between bilevel problems and mathematical programs with complementarity constraints arising from the lower level KKT system has been investigated in \cite{DempeDutta2012}. In the special case of bilevel linear problems, the following holds:

\medskip

\begin{theorem}[{cf. \cite[Theorem 3.2]{DempeDutta2012}}]

\smallskip

\begin{enumerate}
\item[(a)] The optimal values of \eqref{OneLevelRef} and \eqref{KKTReform} coincide

\smallskip
	
\item[(b)] $(\overline{u}, \overline{w})$ is a global minimizer of \eqref{OneLevelRef} if and only if there exists some $\overline{v}$ such that $(\overline{u},\overline{w}, \overline{v})$ is a global minimizer of \eqref{KKTReform}.	

\smallskip

\item[(c)] $(\overline{u}, \overline{w})$ is a local minimizer of \eqref{OneLevelRef} if and only if $(\overline{u},\overline{w}, \overline{v})$ is a local minimizer of \eqref{KKTReform} for any $\overline{v} \leq 0$ satisfying $W^\top \overline{v} = t$ and $\overline{v}^\top (W\overline{w} - B\overline{u} - b) = 0$.
\end{enumerate}
\end{theorem}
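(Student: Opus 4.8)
The plan is to reduce everything to the standard fact that, for a linear program, a feasible point is optimal \emph{if and only if} it admits KKT multipliers, no constraint qualification being required. Concretely, for fixed $u$ the point $w$ lies in $\Psi(u)$ precisely when $w$ is primal feasible ($Ww \leq Bu + b$) and there exists $v$ with $W^\top v = t$, $v \leq 0$ and $v^\top(Ww - Bu - b) = 0$; this is LP strong duality together with complementary slackness. Consequently the feasible set of \eqref{KKTReform} projects under $(u,w,v) \mapsto (u,w)$ exactly onto the feasible set $\{(u,w) \mid u \in U, \ w \in \Psi(u)\}$ of \eqref{OneLevelRef}, and since the objective $g^\top u + h^\top w$ does not depend on $v$, assertions (a) and (b) follow immediately: any feasible $(u,w)$ of \eqref{OneLevelRef} lifts to a feasible triple of \eqref{KKTReform} with the same value, and every feasible triple of \eqref{KKTReform} projects to a feasible pair of \eqref{OneLevelRef}.

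For the ``only if'' part of (c) I would argue directly. Let $(\overline u,\overline w)$ be a local minimizer of \eqref{OneLevelRef}, optimal on $B_\epsilon(\overline u,\overline w)$, and fix any $\overline v \leq 0$ with $W^\top \overline v = t$ and $\overline v^\top(W\overline w - B\overline u - b)=0$. For any feasible triple $(u,w,v)$ of \eqref{KKTReform} lying in $B_\epsilon(\overline u,\overline w,\overline v)$, the pair $(u,w)$ is feasible for \eqref{OneLevelRef} by the equivalence above and lies in $B_\epsilon(\overline u,\overline w)$; hence $g^\top u + h^\top w \geq g^\top \overline u + h^\top \overline w$. As the objective ignores $v$, this shows $(\overline u,\overline w,\overline v)$ is a local minimizer of \eqref{KKTReform} for every admissible $\overline v$.

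The ``if'' part of (c) is the crux and I expect it to be the main obstacle, because local optimality for \eqref{KKTReform} only controls perturbations close to a \emph{particular} multiplier $\overline v$, whereas a descent direction for \eqref{OneLevelRef} might be certified by entirely different multipliers. To handle this I would proceed by contraposition: suppose $(\overline u,\overline w)$ is not a local minimizer of \eqref{OneLevelRef}, so there is a sequence $(u_n,w_n) \to (\overline u,\overline w)$ with $u_n \in U$, $w_n \in \Psi(u_n)$ and $g^\top u_n + h^\top w_n < g^\top \overline u + h^\top \overline w$. For each $n$ the set of KKT multipliers at $(u_n,w_n)$ is a nonempty face of the fixed polyhedron $D := \{v \mid W^\top v = t, \ v \leq 0\}$. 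The decisive structural observation is that $D$ is \emph{pointed}: its lineality space is $\{v \mid W^\top v = 0, \ v \leq 0, \ v \geq 0\} = \{0\}$, so $D$ has finitely many vertices and each of its nonempty faces contains one of them. I would therefore select $v_n$ to be a vertex of $D$ lying among the multipliers at $(u_n,w_n)$; these range over a finite set, so after passing to a subsequence $v_n \equiv \overline v$ is constant.

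Finally I would pass to the limit. Since $v_n = \overline v$ and $v_n^\top(Ww_n - Bu_n - b) = 0$ for all $n$, continuity gives $\overline v^\top(W\overline w - B\overline u - b) = 0$, so $\overline v$ is an admissible multiplier at $(\overline u,\overline w)$; moreover each $(u_n,w_n,\overline v)$ is feasible for \eqref{KKTReform} and converges to $(\overline u,\overline w,\overline v)$ while strictly undercutting its objective value. This contradicts the hypothesis that $(\overline u,\overline w,\overline v)$ is a local minimizer of \eqref{KKTReform}, completing the contraposition and hence the ``if'' direction. The only delicate point in this chain is guaranteeing that the multipliers can be kept bounded and made to converge to an admissible limit multiplier, which is exactly what the pointedness of $D$ secures.
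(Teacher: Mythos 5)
Your proof is correct and follows the same overall skeleton as the paper's: the easy assertions come from the fact that, for a linear lower level, the KKT system characterizes $\Psi(u)$ exactly without any constraint qualification, so the feasible set of \eqref{KKTReform} projects onto that of \eqref{OneLevelRef} and the objective ignores $v$; the hard implication of (c) is then obtained by contradiction from a descent sequence $(u_n,w_n)\to(\overline u,\overline w)$. Where you genuinely diverge is in how you produce an admissible multiplier $\overline v$ at the limit that is accompanied by multipliers along the sequence. The paper invokes outer semicontinuity of the multiplier map $\Lambda$ (Lemma \ref{LemmaOuterSemicont}) to assert $\Lambda(u_n,w_n)\subseteq\Lambda(\overline u,\overline w)$ for large $n$ and then takes ``any converging sequence'' $v_n\in\Lambda(u_n,w_n)$; neither step is immediate, since outer semicontinuity alone does not give that inclusion and the multiplier sets may be unbounded, so convergent selections need justification. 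You instead note that each $\Lambda(u_n,w_n)$ is a nonempty face of the pointed polyhedron $D=\{v\mid W^\top v=t,\ v\le 0\}$, pick a vertex $v_n$ of $D$ inside it, and use the finiteness of the vertex set to pass to a constant subsequence $v_n\equiv\overline v$, whose admissibility at $(\overline u,\overline w)$ follows by continuity of the complementarity term. This vertex-plus-pigeonhole device makes the extraction step fully self-contained and sidesteps the boundedness issue that the paper's argument glosses over; the paper's route is shorter on the page but relies implicitly on the same polyhedral structure. (A cosmetic point: you and the paper swap the labels ``if'' and ``only if'' in (c), but you both identify the same direction as the hard one, and your usage is the standard one.)
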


\medskip

\begin{proof}
As the lower level problem is linear, its KKT conditions are necessary and sufficient for optimality. Thus, we have $w \in \Psi(u)$ if and only if there exists some $v \leq 0$ such that $W^\top v = t$ and $v^\top (Ww - Bu - b) = 0$, which implies (a), (b) and the "if" part of (c).

\medskip

To show the "only if" part of (c), let $(\overline{u},\overline{w}, \overline{v})$ be a local minimizer of \eqref{KKTReform} for any $\overline{v} \leq 0$ satisfying $W^\top \overline{v} = t$ and $\overline{v}^\top (W\overline{w} - B\overline{u} - b) = 0$ and suppose that $(\overline{u},\overline{w})$ is not a local minimizer of \eqref{OneLevelRef}. Then there exist sequences $\lbrace u_n \rbrace_{n \in \mathbb{N}} \subseteq U$ and $\lbrace w_n \rbrace_{n \in \mathbb{N}}$ such that $\lim_{n \to \infty} u_n = \overline{u}$, $\lim_{n \to \infty} w_n = \overline{w}$ and for any $n \in \mathbb{N}$ we have $w_n \in \Psi(u_n)$ and
\begin{equation}
\label{ProofKKTEquiv1}
g^\top u_n + h^\top w_n < g^\top \overline{u} + h^\top \overline{w}.
\end{equation}
As the mapping $\Lambda: \mathrm{gph} \; \Psi \rightrightarrows \mathbb{R}^r$ given by
\begin{equation}
\label{DefLambda}
\Lambda(u,w) := \lbrace v \in \mathbb{R}^r \; | \; W^\top v = t, \; v \leq 0, \; v^\top (Ww - Bu -b) = 0 \rbrace
\end{equation}
is outer semicontinuous by Lemma \ref{LemmaOuterSemicont} in the Appendix, there exists some $N \in \mathbb{N}$ such that
\begin{equation}
\label{ProofKKTEquiv2}
\Lambda(u_n,w_n) \subseteq \Lambda(\overline{u},\overline{w})
\end{equation}
holds for all $n \geq N$. Fix any converging sequence $\lbrace v_n \rbrace_{n \in \mathbb{N}}$ such that $v_n \in \Lambda(u_n, w_n)$ holds for any $n \in \mathbb{N}$. By \eqref{ProofKKTEquiv2} we have $\overline{v} = \lim_{n \to \infty} v_n \in \Lambda(\overline{u}, \overline{w})$. Thus, $(\overline{u},\overline{w},\overline{v})$ is a local minimizer of \eqref{KKTReform}. In particular, there exists some $\overline{N} \in \mathbb{N}$ such that
$g^\top u_n + h^\top w_n \geq g^\top \overline{u} + h^\top \overline{w}$ for all $n \geq \overline{N}$, which contradicts \eqref{ProofKKTEquiv1}.
\end{proof}

\medskip

It is known that often used regularity conditions as Mangasarian-Fromovitz constraint qualification or Slater's constraint qualification are violated at every feasible point of \eqref{KKTReform} (cf. \cite{ScheelScholtes2000}). To overcome the difficulties related with this property, we propose to replace \eqref{KKTReform} by
\leqnomode
\begin{equation}
\label{KKTRelax}
\tag*{$\mathrm{P(\varepsilon)}$}
\min_{u, w, v} \left\{ g^\top u + h^\top w \; \Bigg| \; \begin{aligned}  &Ww \leq Bu + b, \; W^\top v = t, \; v \leq 0, \\ &v^\top (Ww - Bu - b) \leq \varepsilon, \; u \in U \end{aligned} \right\}
\end{equation}
and solve this problem for $\varepsilon \downarrow 0$. This approach and its use to solve general mathematical programs with equilibrium constraints has been investigated in \cite{Scholtes2001}. For the special case of the bilevel linear optimization problem \eqref{OneLevelRef} we can prove the following result:

\medskip

\begin{theorem} \label{TheoremLinearScholtes}
Let $(\overline{u}, \overline{w}, \overline{v})$ be an accumulation point of a sequence $\lbrace (u_n, w_n, v_n) \rbrace_{n \in \mathbb{N}}$ of local minimizers of problem $\mathrm{P(}\varepsilon_n\mathrm{)}$ for $\varepsilon_n\downarrow 0$. Then $(\overline{u}, \overline{w})$ is a local minimizer of \eqref{OneLevelRef}.
\end{theorem}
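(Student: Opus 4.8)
The plan is to verify first that the accumulation point is feasible for the limiting problem, and then to prove local minimality by contradiction; the decisive device is to lift nearby feasible points of \eqref{OneLevelRef} to feasible points of $\mathrm{P}(\varepsilon_n)$ by re-using the \emph{same} limiting multiplier $\overline{v}$ rather than a point's own (uncontrollable) KKT multiplier.

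\emph{Feasibility of the limit.} Passing to the convergent subsequence realizing $(\overline{u},\overline{w},\overline{v})$, the constraints $u\in U$, $Ww\le Bu+b$, $W^\top v=t$ and $v\le 0$ survive in the limit since they describe a closed set. For the complementarity I would note that $v_n^\top(Ww_n-Bu_n-b)\le\varepsilon_n\to 0$, while $v_n\le 0$ and $Ww_n-Bu_n-b\le 0$ force $v_n^\top(Ww_n-Bu_n-b)\ge 0$; hence the limit satisfies $\overline{v}^\top(W\overline{w}-B\overline{u}-b)=0$. Thus $(\overline{u},\overline{w},\overline{v})$ is feasible for \eqref{KKTReform}, so $\overline{w}\in\Psi(\overline{u})$ and $(\overline{u},\overline{w})$ is feasible for \eqref{OneLevelRef}.

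\emph{Contradiction and key construction.} Suppose $(\overline{u},\overline{w})$ is not a local minimizer of \eqref{OneLevelRef}. Then there are $(u_k,w_k)\to(\overline{u},\overline{w})$ with $u_k\in U$, $w_k\in\Psi(u_k)$ and $g^\top u_k+h^\top w_k<g^\top\overline{u}+h^\top\overline{w}$. I would attach the fixed multiplier $\overline{v}$ and consider $(u_k,w_k,\overline{v})$. The only constraint of $\mathrm{P}(\varepsilon_n)$ that is not immediate is the relaxed complementarity, and here the crucial point is that $\overline{v}^\top(Ww_k-Bu_k-b)\to 0$ as $k\to\infty$: from $\overline{v}^\top(W\overline{w}-B\overline{u}-b)=0$ together with the sign conditions, the support of $\overline{v}$ lies in the index set active at $(\overline{u},\overline{w})$, so each summand tends to zero by continuity. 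Hence $(u_k,w_k,\overline{v})$ becomes feasible for $\mathrm{P}(\varepsilon_n)$ once this slack drops below $\varepsilon_n$, and---unlike a generic multiplier selection---these points converge to $(\overline{u},\overline{w},\overline{v})$, the very point approached by the local minimizers $(u_n,w_n,v_n)$.

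\emph{Closing the argument and the main obstacle.} Writing $S\subseteq S_{\varepsilon_n}\subseteq P_{\mathrm{lin}}$ for the nested feasible sets of \eqref{KKTReform}, of $\mathrm{P}(\varepsilon_n)$, and of the underlying polyhedron $P_{\mathrm{lin}}=\{(u,w,v)\mid Ww\le Bu+b,\,W^\top v=t,\,v\le 0,\,u\in U\}$, I would split according to whether the bilinear constraint $v^\top(Ww-Bu-b)\le\varepsilon_n$ is active at $(u_n,w_n,v_n)$. If it is inactive for infinitely many $n$, then $S_{\varepsilon_n}$ locally coincides with $P_{\mathrm{lin}}$ there, so the linear objective attains a local---hence global---minimum over the convex set $P_{\mathrm{lin}}$ at $(u_n,w_n,v_n)$; as this minimal value is a single constant and $g^\top u_n+h^\top w_n\to g^\top\overline{u}+h^\top\overline{w}$, it follows that $(\overline{u},\overline{w},\overline{v})$ minimizes the objective over $P_{\mathrm{lin}}\supseteq S$, which already yields the claim. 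The genuine difficulty is the complementary case, in which the bilinear constraint stays active along the sequence: this is the familiar phenomenon that regularizing a complementarity constraint a priori returns only C-stationarity of the limit, and the radius of local optimality $r_n$ may shrink faster than the slack and the objective gap produced by the construction. I expect this to be the hard part, and would resolve it by combining the construction of the previous paragraph with the finiteness of the active-set/complementarity patterns of $S_{\varepsilon_n}$: only finitely many polyhedral pieces occur near $(\overline{u},\overline{w},\overline{v})$, which furnishes a uniform lower bound on $r_n$ along the subsequence and permits a diagonal choice $k=k(n)\to\infty$ placing $(u_{k(n)},w_{k(n)},\overline{v})$ inside the local-optimality ball of $(u_n,w_n,v_n)$ with strictly smaller objective, contradicting local minimality.
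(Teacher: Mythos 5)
Your feasibility argument for the limit point and your first case (the relaxed complementarity constraint inactive at $(u_n,w_n,v_n)$ along a subsequence, so that local minimality over the polyhedron $P_{\mathrm{lin}}$ upgrades to global minimality) are both sound. The gap is exactly where you locate it, but your proposed resolution does not work. Lifting the better feasible points $(u_k,w_k)$ of \eqref{OneLevelRef} with the fixed multiplier $\overline{v}$ produces points converging to $(\overline{u},\overline{w},\overline{v})$, whereas local minimality of $(u_n,w_n,v_n)$ only controls a ball of some radius $r_n$ around $(u_n,w_n,v_n)$; nothing prevents $r_n$ from shrinking faster than $\|(u_n,w_n,v_n)-(\overline{u},\overline{w},\overline{v})\|$. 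Your fix rests on the claim that near $(\overline{u},\overline{w},\overline{v})$ the feasible set of $\mathrm{P}(\varepsilon_n)$ consists of finitely many polyhedral pieces, but this is false: for $\varepsilon_n>0$ the constraint $v^\top(Ww-Bu-b)\le\varepsilon_n$ is a genuine bilinear inequality and the set it cuts out (think of $\{(a,b)\mid a\le 0,\ b\le 0,\ ab\le\varepsilon\}$) is not a union of polyhedra. Only the unrelaxed set ($\varepsilon=0$) decomposes by complementarity patterns, and the points $(u_n,w_n,v_n)$ do not live there. So no uniform lower bound on $r_n$ is obtained, and the hard case remains open in your argument.

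The paper's proof circumvents this by making the contradicting points converge to $(u_n,w_n,v_n)$ itself rather than to the limit. Using that $\mathrm{gph}\,\Psi$ is a finite union of polyhedra (a correct structural fact about the bilevel feasible set, not about $\mathrm{P}(\varepsilon_n)$), a failure of local optimality at $(\overline{u},\overline{w})$ yields a fixed descent direction $(d_u,d_w)$ with $\overline{w}+\alpha_m d_w\in\Psi(\overline{u}+\alpha_m d_u)$ along $\alpha_m\downarrow 0$. This direction, together with a suitably chosen vertex $\tilde{v}$ of $\Lambda(\overline{u},\overline{w})$, is then transported to $(u_n,w_n)$: one verifies index by index that $(u_n+\alpha_m d_u,\,w_n+\alpha_m d_w,\,\tilde{v})$ is feasible for $\mathrm{P}(\varepsilon_n)$, and finally takes convex combinations $\lambda(u_n+\alpha_m d_u,w_n+\alpha_m d_w,\tilde{v})+(1-\lambda)(u_n,w_n,v_n)$, which remain feasible (the bilinear term is controlled by splitting $\varepsilon_n$ into two halves), have strictly smaller objective, and tend to $(u_n,w_n,v_n)$ as $\lambda\to 0$ and $m\to\infty$. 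This contradicts local minimality of $(u_n,w_n,v_n)$ for any radius, which is precisely the step your construction cannot deliver. You would need to import this transport-and-convex-combination device (or an equivalent substitute) to close the argument.
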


\medskip

\begin{proof}
Without loss of generality, we may assume that $\lbrace (u_n, w_n, v_n) \rbrace_{n \in \mathbb{N}}$ converges. Suppose that $(\overline{u}, \overline{w})$ is not a local minimizer of \eqref{OneLevelRef}. Then, since $U$ is a polyhedron and $\mathrm{gph} \; \Psi$ is polyhedral (cf. \cite[Theorem 3.1]{Dempe2002}), i.e. equal to the union of a finite number of polyhedra, there exist a direction $(d_u, d_w) \in \mathbb{R}^k \times \mathbb{R}^l$ and a sequence $\alpha_m \downarrow 0$ such that $\overline{u} + \alpha_m d_u \in U$, $\overline{w} + \alpha_m d_w \in \Psi(\overline{u} + \alpha_m d_u)$ and
\begin{equation}
\label{ProofScholtes1}
g^\top (\overline{u} + \alpha_m d_u) + h^\top (\overline{w} + \alpha_m d_w) < g^\top \overline{u} + h^\top \overline{w}
\end{equation}
hold for any $m \in \mathbb{N}$. As the mapping $\Lambda$ defined by \eqref{DefLambda} is outer semicontinuous, there exists a constant $N \in \mathbb{N}$ such that $\Lambda(\overline{u} + \alpha_m d_u, \overline{w} + \alpha_m d_w) \subseteq \Lambda(\overline{u},\overline{w})$ for any $m \geq N$. In particular, there exists some vertex $\tilde{v}$ of $\Lambda(\overline{u},\overline{w})$ such that $\tilde{v}$ is a vertex of $\Lambda(\overline{u} + \alpha_m d_u, \overline{w} + \alpha_m d_w)$ for any $m \geq N$. We shall prove that there exists some $\overline{N} \in \mathbb{N}$ such that
\begin{equation}
\label{ProofScholtes2}
W(w_n + \alpha_m d_w) - B(u_n + \alpha_m d_u) - b \leq 0
\end{equation}
holds for any $m, n\geq \overline{N}$.

\medskip

For any $i \in \lbrace 1, \ldots, r \rbrace$ with $e_i^\top \tilde{v} < 0$ and any $m \geq N$, we have
$$
e_i^\top \big(W \overline{w} - B \overline{u} - b + \alpha_m (W d_w - Bd_u)\big) =  e_i^\top \big(W(\overline{w} + \alpha_m d_w) - B(\overline{u} + \alpha_m d_u) - b \big)= 0.
$$
As $e_i^\top (W \overline{w} - B \overline{u} - b) = 0$ and $\alpha_m > 0$, this implies $e_i^\top (W d_w - B d_u) = 0$. Furthermore, since $(u_n,w_n)$ is feasible for $\mathrm{P(\varepsilon_n)}$, we conclude that
$$
e_i^\top \big(W(w_n + \alpha_m d_w) - B(u_n + \alpha_m d_u) - b \big) \leq 0
$$
for any $m, n \in \mathbb{N}$.

\medskip

Similarly, for any $i \in \lbrace 1, \ldots, r \rbrace$ such that $e_i^\top \tilde{v} = 0$ and $e_i^\top (W \overline{w} - B \overline{u} - b) = 0$, we obtain $e_i^\top (W d_w - B d_u) \leq 0$ and thus
$$
e_i^\top \big(W(w_n + \alpha_m d_w) - B(u_n + \alpha_m d_u) - b \big) \leq 0
$$
for any $m,n \in \mathbb{N}$.

\medskip

Finally, for any $i \in \lbrace 1, \ldots, r \rbrace$ such that $e_i^\top \tilde{v} = 0$ and $e_i^\top (W \overline{w} - B \overline{u} - b) < 0$, the existence of some $\overline{N} \in \mathbb{N}$ such that
$$
e_i^\top \big(W(w_n + \alpha_m d_w) - B(u_n + \alpha_m d_u) - b \big) \leq 0
$$
for any $m,n \geq \overline{N}$ follows from the continuity of the mapping
$$
(u, w, \alpha) \mapsto W(w + \alpha d_w) - B(u + \alpha d_u) - b.
$$
By the above considerations, we have
\begin{equation}
\label{ProofScholtes3}
\tilde{v}^\top \big(W(w_n + \alpha_m d_w) - B(u_n + \alpha_m d_u) - b \big) = \tilde{v}^\top \big(W w_n - Bu_n - b \big)
\end{equation}
and $\lim_{n \to \infty} \tilde{v}^\top (W w_n - Bu_n - b) = 0$. Furthermore, as $\varepsilon_n \downarrow 0$, $(u_n,w_n,v_n)$ is feasible for $\mathrm{P(}\varepsilon_{n'}\mathrm{)}$ for any $n' \geq n$. Thus, we may assume that
\begin{equation}
\label{ProofScholtes4}
\tilde{v}^\top \big(W w_n - Bu_n - b \big) \leq \frac{\varepsilon_n}{2}
\end{equation}
holds for any $n \in \mathbb{N}$ without loss of generality. \eqref{ProofScholtes2}, \eqref{ProofScholtes3} and \eqref{ProofScholtes4} imply that $(u_n + \alpha_m d_u, w_n + \alpha_m d_w, \tilde{v})$ is feasible for $\mathrm{P(}\varepsilon_n\mathrm{)}$ for any $m,n \geq \overline{N}$.

\medskip

Fix $n \geq \overline{N}$. We shall prove that for any $\lambda \in (0,1]$, there is some $M_\lambda \geq \overline{N}$ such that
\begin{align*}
&\lambda (u_n + \alpha_m d_u, w_n + \alpha_m d_w, \tilde{v}) + (1 - \lambda) (u_n,w_n,v_n) \\
= \; &(u_n + \lambda \alpha_m d_u, w_n + \lambda \alpha_m d_w, \lambda \tilde{v} + (1-\lambda) v_n)
\end{align*}
is feasible for $\mathrm{P(}\varepsilon_n\mathrm{)}$ whenever $m \geq M_\lambda$. As $\lim_{m \to \infty} (1-\lambda) \lambda \alpha_m v_n^\top \big( Wd_w -Bd_u \big) = 0$, there exists some $M_\lambda \geq \overline{N}$ such that
$$
(1-\lambda) \lambda \alpha_m v_n^\top \big( Wd_w -Bd_u \big) \leq \lambda \frac{\varepsilon_n}{2}
$$
for all $m \geq M_\lambda$. By \eqref{ProofScholtes3}, \eqref{ProofScholtes4}, and the feasibility of $(u_n,w_n,v_n)$ for $\mathrm{P(}\varepsilon_{n}\mathrm{)}$, we have
\begin{align*}
&\big(\lambda \tilde{v} + (1-\lambda) v_n\big)^\top  \big(W(w_n + \lambda \alpha_m d_w) - B(u_n + \lambda \alpha_m d_u) - b \big) \\
= \; &\lambda \tilde{v}^\top \big( W(w_n + \lambda \alpha_m d_w) - B(u_n + \lambda \alpha_m d_u) - b \big) \\
+ \; &(1-\lambda) v_n^\top \big( W w_n - B u_n - b \big) + (1-\lambda) \lambda \alpha_m v_n^\top \big( Wd_w -Bd_u \big) \\
\leq \; &\lambda \frac{\varepsilon_n}{2} + (1- \lambda)\varepsilon_n + \lambda \frac{\varepsilon_n}{2} = \varepsilon_n
\end{align*}
for any $m \geq M_\lambda$ and feasibility follows from the linearity of the remaining restrictions.

\medskip

As \eqref{ProofScholtes2} implies $g^\top d_u + h^\top d_w < 0$,
$$
g^\top (u_n + \lambda \alpha_m d_u) + h^\top (w_n + \lambda \alpha_m d_w) < g^\top u_n + h^\top w_n
$$
holds for any $\lambda \in (0,1]$ and $m \geq M_\lambda$, which, by
$$
\lim_{m \to \infty} \lim_{\lambda \to 0} \lambda (u_n + \lambda \alpha_m d_u, w_n + \lambda \alpha_m d_w, \lambda \tilde{v} + (1-\lambda) v_n) = (u_n, w_n, v_n),
$$
yields a contradiction to the local optimality of $(u_n,w_n,v_n)$ for $\mathrm{P(}\varepsilon_{n}\mathrm{)}$.
\end{proof}

\medskip

\begin{rem} Let $(\overline{u}, \overline{w}, \overline{v})$ be an accumulation point of a sequence $\lbrace (u_n, w_n, v_n) \rbrace_{n \in \mathbb{N}}$ of global minimizers of problem $\mathrm{P(}\varepsilon_n\mathrm{)}$ for $\varepsilon_n \downarrow 0$. Then $(\overline{u}, \overline{w})$ is a global minimizer of \eqref{OneLevelRef} (see the ideas in the proof of Theorem 2.1 in \cite{DempeFranke2016} in combination with \cite{DempeDutta2012}).
\end{rem}

\section{Appendix} \label{SecAppendix}

We shall recall some technical results used throughout the paper.

\medskip

\begin{theorem}[{\cite[Theorem 4.2]{KlatteThiere1995}}]\label{TheoremKlatteRightHandSide}
If $D$ positive semidefinite, the set-valued mapping $C: \mathbb{R}^{k} \rightrightarrows \mathbb{R}^{m}$ given by
\begin{equation*}
	C(t) := \underset{y}{\mathrm{Argmin}} \{y^{\top}Dy + d_0^\top y \; | \; Ay \leq t\}
\end{equation*}
is Lipschitz continuous on $\mathrm{dom} \; C := \{t \in \mathbb{R}^k \; | \; C(t) \neq \emptyset\}$, i.e. there exists a constant $\Lambda > 0$ such that $d_{\infty}(C(t),C(t')) \leq \Lambda\|t-t'\|$ holds for any $t, t' \in \mathrm{dom} \; C$.
\end{theorem}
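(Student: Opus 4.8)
The plan is to exploit that $C$, viewed as a set-valued map of the right-hand side $t$, is a \emph{polyhedral multifunction} --- one whose graph is a finite union of convex polyhedra --- and then to invoke the Lipschitz theory available for such maps, upgrading its local conclusion to the global estimate asserted here.

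\medskip

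First I would reduce optimality to the Karush--Kuhn--Tucker system. Since $D$ is positive semidefinite, the objective $\phi(y)=y^\top D y+d_0^\top y$ is convex, so the KKT conditions are both necessary and sufficient: $y\in C(t)$ if and only if there is a multiplier $u\ge 0$ with
$$
2Dy+d_0+A^\top u=0,\qquad Ay\le t,\qquad u^\top(Ay-t)=0.
$$
The only nonlinear ingredient is the complementarity term, which I would remove by disjunction over the active index set: for each $J\subseteq\{1,\dots,k\}$ impose $u_i=0$ for $i\notin J$ and $(Ay)_i=t_i$ for $i\in J$, together with $Ay\le t$ and $u\ge 0$. Each resulting system is polyhedral in $(t,y,u)$, projecting out $u$ by Fourier--Motzkin elimination keeps it polyhedral, and the union over the finitely many $J$ shows that $\mathrm{gph}\;C$ is a finite union of polyhedra. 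Hence $C$ is a polyhedral multifunction.

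\medskip

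Second, I would apply Robinson's theorem that a polyhedral multifunction is upper Lipschitzian with a \emph{single} modulus $\Lambda$ determined by the finitely many polyhedra, hence by $A,D,d_0$ alone: locally, $C(t')\subseteq C(t)+\Lambda\|t-t'\|\mathbb{B}$. Two structural facts feed the globalization. The domain $\mathrm{dom}\;C$ is itself polyhedral and, being the projection of $\mathrm{gph}\;C$ onto the $t$-coordinates, convex. Moreover the recession cone of $C(t)$ equals the constant cone $\{d:Ad\le 0,\ Dd=0,\ d_0^\top d=0\}$, independent of $t\in\mathrm{dom}\;C$: this is where $D\succeq 0$ enters, since for two optimal points $y,y'$ the direction $d=y'-y$ satisfies $d^\top D d=0$, whence $Dd=0$, and then $\phi(y')=\phi(y)$ forces $d_0^\top d=0$.

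\medskip

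Finally, I would upgrade the local upper-Lipschitz estimate to the claimed two-sided bound $d_\infty(C(t),C(t'))\le\Lambda\|t-t'\|$ valid for all $t,t'\in\mathrm{dom}\;C$. Since $\mathrm{dom}\;C$ is convex, one can join $t$ and $t'$ by the segment between them, cover it by finitely many neighborhoods on which the local estimate holds with the uniform modulus, and chain the inclusions; the constancy of the recession cone guarantees that the excess of one (possibly unbounded) optimal polyhedron over another is realized on a bounded portion, so that it is genuinely controlled by $\Lambda\|t-t'\|$. Alternatively, working piecewise over active-set patterns, each defining system has a \emph{fixed} coefficient matrix, so Hoffman's error bound supplies a uniform constant there and the maximum over the finitely many patterns yields the global $\Lambda$. \emph{The main obstacle is exactly this globalization step}: passing from Robinson's purely local modulus to a uniform bound over all pairs in $\mathrm{dom}\;C$ while controlling the excess between unbounded solution sets; the constancy of the recession cone, together with the convexity of $\mathrm{dom}\;C$, is what makes it possible.
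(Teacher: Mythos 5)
The paper itself offers no proof of this statement: it is quoted verbatim from Klatte and Thiere (Theorem 4.2) as a black-box ingredient, so there is nothing internal to compare your argument against. Judged on its own terms, your proposal assembles the right objects --- the KKT disjunction showing $\mathrm{gph}\,C$ is a finite union of polyhedra, Robinson's uniform local upper-Lipschitz modulus, the convexity of $\mathrm{dom}\,C$ and the constancy of the recession cone of $C(t)$ (all of which are correct) --- and you honestly flag the globalization as the crux. The problem is that neither of your two routes actually closes it.

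Route (a), chaining Robinson's estimates along the segment, cannot work even in principle: Robinson gives only the one-sided inclusion $C(t)\subseteq C(t_0)+\Lambda\|t-t_0\|\mathbb{B}$ anchored at $t_0$, and a polyhedral multifunction with convex domain that is locally upper Lipschitzian everywhere need not be Hausdorff--Lipschitz anywhere near a "drop" point. For instance $F(t)=[0,1]$ for $t\le 0$ and $F(t)=\{0\}$ for $t>0$ has polyhedral graph, is upper Lipschitzian at every point with modulus $0$, yet $d_\infty(F(-\varepsilon),F(\varepsilon))=1$. So the implication "polyhedral $+$ Robinson $+$ convex domain $\Rightarrow$ global Hausdorff Lipschitz" is false in general; the truth of the theorem must come from finer structure of $C$, not from this soft machinery. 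Route (b), Hoffman per active-set pattern, also fails as stated: given $y'\in C(t')$ with multiplier pattern $J$, Hoffman's bound for the $J$-system at parameter $t$ controls the distance from $(y',u')$ to that system's solution set \emph{only if that set is nonempty}, and the pattern realized at $t'$ may well define an empty polyhedron at $t$ (active sets switch as $t$ moves), in which case the bound gives nothing.

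The missing idea --- and essentially what Klatte--Thiere do --- is an exact representation of $C(t)$ as a polyhedron with a \emph{fixed} coefficient matrix and a right-hand side that is itself Lipschitz in $t$. Your observation that $y,y'\in C(t)$ forces $D(y-y')=0$ and $d_0^\top(y-y')=0$ is precisely the key: it yields $C(t)=\{y\;|\;Ay\le t,\ Dy=v(t),\ d_0^\top y=w(t)\}$, a system with matrix built from $A$, $\pm D$, $\pm d_0^\top$ only. The Hausdorff--Lipschitz dependence of $\{y\;|\;By\le c\}$ on $c$ over the set where it is nonempty, with constant depending only on $B$ (Walkup--Wets/Hoffman), then applies verbatim, and it remains to show that $v$ and $w$ (equivalently, the optimal value) are Lipschitz in $t$ --- which \emph{does} follow from Robinson's local estimate plus the convexity of $\mathrm{dom}\,C$, because $v$ and $w$ are single-valued. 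Without this reduction your proof does not go through; with it, the rest of your outline becomes superfluous.
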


\medskip

The following result is a well-known direct consequence of Lebesgue's Dominated Convergence Theorem:

\medskip

\begin{lem} \label{LemmaDiffInt}
Let $\mu$ be a Borel-probability measure on $\mathbb{R}^s$, $V \subseteq \mathbb{R}^n \times \mathbb{R}^s$ open, and $g: V \to \mathbb{R}$ such that the following conditions are satisfied:

\smallskip

\begin{enumerate}
\item[(a)] $g(\cdot,z)$ is differentiable at $x_0 \in V_\mu := \lbrace x \; | \; (x,z) \in V \; \forall z \in \mathrm{supp} \; \mu \rbrace$ for $\mu$-almost all $z \in \mathbb{R}^s$ and the derivative $g'(x_0,z)$ is measurable with respect to $z$.

\smallskip

\item[(b)] There exists a neighborhood $U \subseteq V_\mu$ of $x_0$ such that
\begin{enumerate}
\item[(i)] the integral $\int_{\mathbb{R}^s} g(x,z)~\mu(dz)$ is well-defined and finite for all $x \in U$ and
\item[(ii)] there is an integrable function $m: U \to \mathbb{R}$ such that $|e(x,z)| \leq m(z)$ holds for all $x \in U \setminus \lbrace x_0 \rbrace$ and $\mu$-almost all $z \in \mathbb{R}^s$, where
$$
e(x,z) = \frac{1}{\|x-x_0\|} \Big( g(x,z) - g(x_0,z) - \nabla_x g(x_0,z)(x-x_0) \Big).
$$
\end{enumerate}
\end{enumerate}
Then $h: V_\mu \to \overline{\mathbb{R}}$,
$
h(x) = \int_{\mathbb{R}^s} g(x,z)~\mu(dz)
$
is differentiable at $x_0$ and
$$
h'(x_0) = \int_{\mathbb{R}^s} \nabla_x g(x_0,z)~\mu(dz).
$$
\end{lem}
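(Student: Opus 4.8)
The plan is to verify the claim directly from the definition of differentiability, reducing everything to the Dominated Convergence Theorem applied along sequences. First I would confirm that the candidate derivative $\int_{\mathbb{R}^s} \nabla_x g(x_0,z)~\mu(dz)$ is well-defined and finite. Rearranging the definition of $e$ gives $\nabla_x g(x_0,z)(x-x_0) = g(x,z) - g(x_0,z) - \|x-x_0\| e(x,z)$, so that for any fixed $x \in U \setminus \lbrace x_0 \rbrace$ the left-hand side is dominated in absolute value by $|g(x,z)| + |g(x_0,z)| + \|x-x_0\| m(z)$, which is $\mu$-integrable by conditions (b)(i) and (b)(ii). Specializing $x = x_0 + t e_i$ to the coordinate directions shows each partial derivative $\partial_{x_i} g(x_0,\cdot)$ is $\mu$-integrable, hence $\int_{\mathbb{R}^s} \nabla_x g(x_0,z)~\mu(dz)$ exists and is finite.

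Next, I would set up the key identity. Using linearity of the integral together with the finiteness just established,
$$
\frac{1}{\|x-x_0\|}\left( h(x) - h(x_0) - \Big(\int_{\mathbb{R}^s} \nabla_x g(x_0,z)~\mu(dz)\Big)(x-x_0) \right) = \int_{\mathbb{R}^s} e(x,z)~\mu(dz)
$$
holds for every $x \in U \setminus \lbrace x_0 \rbrace$. Differentiability of $h$ at $x_0$ with the asserted derivative is therefore \emph{equivalent} to showing $\int_{\mathbb{R}^s} e(x,z)~\mu(dz) \to 0$ as $x \to x_0$.

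To establish this limit I would argue sequentially. Fix an arbitrary sequence $\lbrace x_k \rbrace_{k \in \mathbb{N}} \subseteq U \setminus \lbrace x_0 \rbrace$ with $x_k \to x_0$; it suffices to show $\int_{\mathbb{R}^s} e(x_k,z)~\mu(dz) \to 0$ for every such sequence. By condition (a), $e(x_k,z) \to 0$ as $k \to \infty$ for all $z$ outside a $\mu$-null set, while condition (b)(ii) supplies the $k$-uniform integrable bound $|e(x_k,z)| \leq m(z)$, where the countably many exceptional null sets arising from (a) and (b)(ii) along this fixed sequence are absorbed into a single $\mu$-null set. The Dominated Convergence Theorem then yields $\int_{\mathbb{R}^s} e(x_k,z)~\mu(dz) \to 0$, and since the sequence was arbitrary, $\lim_{x \to x_0}\int_{\mathbb{R}^s} e(x,z)~\mu(dz) = 0$, which completes the proof.

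The main obstacle I anticipate is purely technical: the Dominated Convergence Theorem is a statement about sequences, whereas differentiability involves the continuous limit $x \to x_0$. The passage to sequences is exactly what makes the domination hypothesis usable, and it simultaneously allows one to collapse any $x$-dependence of the exceptional null sets in (a) and (b)(ii) into a single null set along each fixed sequence. Beyond this, the only care needed is the preliminary finiteness check for the candidate gradient, which the hypotheses (b)(i) and (b)(ii) are precisely tailored to deliver.
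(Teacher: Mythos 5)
Your proposal is correct and follows essentially the same route as the paper: define the remainder $\varepsilon(x)$, identify it with $\int_{\mathbb{R}^s} e(x,z)~\mu(dz)$, and pass to the limit via Lebesgue's Dominated Convergence Theorem using the pointwise convergence from (a) and the dominating function $m$ from (b)(ii). Your additional care (verifying finiteness of the candidate gradient and making the reduction to sequences explicit) only fills in steps the paper leaves implicit.
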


\vspace{-15pt}

\begin{proof}
Set
$$
\varepsilon(x) := \frac{1}{\|x-x_0\|} \Big( h(x) - h(x_0) - \int_{\mathbb{R}^s} \nabla_x g(x_0,z)(x-x_0)~\mu(dz) \Big).
$$
By assumption, we have $\lim_{x \to x_0} |e(x,z)| = 0$ for $\mu$-almost all $z \in \mathbb{R}^s$ and Lebesgue's Dominated Convergence Theorem implies
$$
\lim_{x \to x_0} |\varepsilon(x)| \leq \lim_{x \to x_0} \int_{\mathbb{R}^s} |e(x,z)|~\mu(dz) = \int_{\mathbb{R}^s} \lim_{x \to x_0} |e(x,z)|~\mu(dz) = 0,
$$
which completes the proof.
\end{proof}

\medskip

\begin{lem} \label{LemmaOuterSemicont}
Let $\mathcal{C} \subseteq \mathbb{R}^{k \times s}$ be closed, then the set-valued mapping $\mathcal{T}: \mathbb{R}^k \rightrightarrows \mathbb{R}^l$,
$$
\mathcal{T}(t) = \lbrace z \in \mathbb{R}^s \; | \; (t,z) \in \mathcal{C} \rbrace
$$
is outer semicontinuous (cf. \cite{RockafellarWets2009}), i.e. $\limsup_{t \to t_0} \; \mathcal{T}(t) \subseteq \mathcal{T}(t_0) \; \forall t_0 \in \mathbb{R}^k$.
\end{lem}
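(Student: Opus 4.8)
The plan is to reduce outer semicontinuity to the closedness of $\mathcal{C}$ by means of the sequential characterization of the outer limit. Recall (cf. \cite{RockafellarWets2009}) that
$$
\limsup_{t \to t_0} \mathcal{T}(t) = \lbrace z \; | \; \exists \, t_n \to t_0, \; \exists \, z_n \to z \text{ with } z_n \in \mathcal{T}(t_n) \; \forall n \rbrace.
$$
Hence it suffices to fix an arbitrary $z \in \limsup_{t \to t_0} \mathcal{T}(t)$ and verify that $z \in \mathcal{T}(t_0)$.

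First I would invoke the characterization above to extract sequences $t_n \to t_0$ and $z_n \to z$ with $z_n \in \mathcal{T}(t_n)$ for every $n$. By the very definition of $\mathcal{T}$, the membership $z_n \in \mathcal{T}(t_n)$ means precisely that $(t_n, z_n) \in \mathcal{C}$ for all $n$.

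Next, since $t_n \to t_0$ and $z_n \to z$, the pairs converge jointly, $(t_n, z_n) \to (t_0, z)$ in the product space $\mathbb{R}^k \times \mathbb{R}^s$. As $\mathcal{C}$ is closed, the limit of this sequence of points of $\mathcal{C}$ again lies in $\mathcal{C}$, whence $(t_0, z) \in \mathcal{C}$, which is exactly $z \in \mathcal{T}(t_0)$. Because $t_0$ and $z$ were arbitrary, the inclusion $\limsup_{t \to t_0} \mathcal{T}(t) \subseteq \mathcal{T}(t_0)$ holds for every $t_0 \in \mathbb{R}^k$, as claimed.

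There is essentially no obstacle here: the assertion is just a translation of the closedness of $\mathcal{C}$ into the language of set-valued mappings. The only point requiring any care is to start from the correct sequential form of the outer limit $\limsup_{t\to t_0}\mathcal{T}(t)$; once that definition is in hand, the argument is a one-line application of closedness.
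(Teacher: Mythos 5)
Your proposal is correct and coincides with the paper's own proof: both use the sequential characterization of the outer limit to produce sequences $t_n \to t_0$, $z_n \to z$ with $(t_n,z_n) \in \mathcal{C}$, and then conclude $(t_0,z) \in \mathcal{C}$ from closedness. No differences worth noting.
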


\medskip

\begin{proof}
By definition of the outer limit, $z \in \limsup_{t \to t_0} \; \mathcal{T}(t)$ holds if and only if there are sequences $\lbrace t_n \rbrace_{n \in \mathcal{N}} \subset \mathbb{R}^k$ and $\lbrace z_n \rbrace_{n \in \mathcal{N}} \subset \mathbb{R}^s$ such that
$$
\lim_{n \to \infty} t_n = t_0, \; \lim_{n \to \infty} z_n = z \; \; \text{and} \; \; z_n \in \mathcal{T}(t_n) \; \forall n \in \mathbb{N}.
$$
For any such sequences we have $(t_n,z_n) \in \mathcal{C}$ for all $n \in \mathcal{N}$ and thus $(t_0,z) \in \mathcal{C}$. Consequently, $z \in \mathcal{T}(t_0)$.
\end{proof}

\section*{Acknowledgement}

The second author thanks the Deutsche Forschungsgemeinschaft for its support via the Collaborative Research Center TRR 154.


\begin{thebibliography}{99}

\bibitem{BankGuddatKlatteKummerTammer1982}
B. Bank, J. Guddat, D. Klatte, B. Kummer and K. Tammer, \textit{Non-Linear Parametric Optimization}, Akademie-Verlag, Berlin (1982).

\bibitem{Beer1977}
K. Beer, \textit{L\"osung gro\ss er linearer Optimierungsaufgaben}, Deutscher Verlag der Wissenschaften, Berlin (1977).

\bibitem{Bertsekas1999}
D. P. Bertsekas, \textit{Nonlinear Programming}, 2nd edition, Athena Scientific, Belmont, Massachusetts (1999).

\bibitem{Billingsley1968}
P. Billingsley, \textit{Convergence of Probability Measures}, Wiley, New York (1968).

\bibitem{BirbilGuerkanListes2004}
S. I. Birbil, G. G\"urkan and O. Listes, \textit{Simulation-based solution of stochastic mathematical problems with complementarity constraints: Sample-path analysis}, Technical report, ERIM Report Series Research in Management, ERS-2004-016-LIS (2014).

\bibitem{CarrionArroyoConejo2009}
M. Carrion, J. M. Arroyo and A. J. Conejo, \textit{A bilevel stochastic programming approach for retailer futures market trading}, Power Systems, IEEE Transactions on, 24(3), pp. 1446-1456 (2009).

\bibitem{ClausKraetschmerSchultz2017}
M. Claus, V. Kr\"atschmer and R. Schultz, \textit{Weak continuity of risk functionals with applications to stochastic programming}, SIAM Journal on Optimization, 27(1), pp. 91-108 (2017).

\bibitem{DeMiguelXu2009}
V. DeMiguel and H. Xu, \textit{A stochastic multiple-leader Stackelberg model: analysis, computation, and application}, Operations Research, 57(5), pp. 1220-1235 (2009).

\bibitem{Dempe2002}
S. Dempe, \textit{Foundations of Bilevel Programming}, Kluwer Academic Publishers, Dordrecht et al. (2002).

\bibitem{DempeDutta2012}
S. Dempe and J. Dutta, \textit{Is bilevel programming a special case of a mathematical program with complementarity constraints?}, Mathematical Programming, 131, pp. 37-48 (2012).

\bibitem{DempeFranke2016}
S. Dempe and S. Franke, \textit{Solution of Bilevel Optimization Problems using the {KKT} Approach}, {Preprint, TU Bergakademie Freiberg, No. 11/2016} (2016).

\bibitem{DempeIvanovNaumov2017}
S. Dempe, S. V. Ivanov and A. Naumov, \textit{Reduction of the bilevel stochastic optimization problem with quantile objective function to a mixed-integer problem}, Applied Stochastic Models in Business and Industry, 33(5), pp. 544-554 (2017).

\bibitem{Eaves1971}
B. C. Eaves, \textit{On Quadratic Programming}, Management Science, Vol. 17(11), pp. 698-711 (1971).

\bibitem{FoellmerSchied2002}
H. F\"ollmer and A. Schied, \textit{Convex measures of risk and trading constraints}, Finance and Stochastics, 6(4), pp. 429-447 (2002).

\bibitem{FoellmerSchied2011}
H. F\"ollmer and A. Schied, \textit{Stochastic Finance}, de Gruyter, Berlin, 3rd Edition (2011).

\bibitem{FrittelliGianin2005}
M. Frittelli and E. R. Gianin, \textit{Law Invariant Convex Risk Measures}, Advances in Mathematical Economics, 7, pp. 42-53 (2005).

\bibitem{FrittelliGianin2002}
M. Frittelli and E. R. Gianin, \textit{Putting order in risk measures}, Journal of Banking and Finance, 26(7), pp. 1473-1486 (2002).

\bibitem{GaivoronskiWerner2012}
A. A. Gaivoronski and A. Werner, \textit{Stochastic Programming Perspective on the Agency Problems under Uncertainty}, \texttt{in} Managing Safety of Heterogeneous Systems, Springer, pp. 137-167 (2012).

\bibitem{Gordan1873}
P. Gordan, \textit{\"Uber die Aufl\"osungen linearer Gleichungen mit reellen Coefficienten}, \texttt{in} Math. Annalen, 6, p. 238 (1873).

\bibitem{Henkel2014}
C. Henkel, \textit{An Algorithm for the Global Resolution of Linear Stochastic Bilevel Programs}, PhD thesis, University of Duisburg-Essen (2014).

\bibitem{Ivanov2014}
S. V. Ivanov, \textit{Bilevel stochastic linear programming problems with quantile criterion}, Automation and Remote Control, 75(1), pp. 107-118 (2014).

\bibitem{KlatteKummer1984}
D. Klatte and B. Kummer, \textit{Stability properties of infima and optimal solutions of parametric optimization problems}, Nondifferentiable Optimization: Motivations and Applications, Proceedings of the IIASA Workshop, Sopron, Lect. Notes Econ. Math. Syst., Vol. 255, Springer, Berlin, pp. 215-229 (1984).

\bibitem{KlatteThiere1995}
D. Klatte and G. Thiere, \textit{Error bounds for solutions of linear equations and inequalities}, ZOR - Mathematical Methods of Operations Research, 41, pp. 191-214 (1995).

\bibitem{KosuchLeBodicLeungLisser2012}
S. Kosuch, P. Le Bodic, J. Leung and A. Lisser, \textit{On a stochastic bilevel programming problem}, Networks, 59(1), pp. 107-116 (2012).

\bibitem{KovacevicPflug2013}
R. M. Kovacevic and G. C. Pflug, \textit{Electricity swing option pricing by stochastic bilevel optimization: a survey and new approaches}, European Journal of Operational Research, 237(2), pp. 389-403 (2013).

\bibitem{KraetschmerSchiedZaehle2014}
V. Kr\"atschmer, A. Schied and H. Z\"ahle, \textit{Comparative and qualitative robustness for law-invariant risk measures}, Finance and Stochastics, 18, pp. 271-295 (2014).

\bibitem{KraetschmerSchiedZaehle2017}
V. Kr\"atschmer, A. Schied and H. Z\"ahle, \textit{Domains of weak continuity of statistical functionals with a view on robust statistics}, Journal of Multivariate Analysis, 158, pp. 1-19 (2017).

\bibitem{KraetschmerSchiedZaehle2012}
V. Kr\"atschmer, A. Schied and H. Z\"ahle, \textit{Qualitative and infinitesimal robustness of tail-dependent statistical functionals}, Journal of Multivariate Analysis, 103, pp. 35-47 (2012).

\bibitem{LinFukushima2010}
G.-H. Lin and M. Fukushima, \textit{Stochastic equilibrium problems and stochastic mathematical programs with equilibrium constraints: a survey}, Pacific Journal of Optimization, 6(3), pp. 455-482 (2010).

\bibitem{LiuXuLin2011}
Y. Liu, H. Xu and G.-H. Lin, \textit{Stability Analysis of Two-Stage Stochastic Mathematical Programs with Complementarity Constraints via NLP Regularization}, SIAM Journal on Optimization, 21(3), pp. 669-705 (2011).

\bibitem{Luderer1983}
B. Luderer, \textit{\"Uber die \"Aquivalenz nichtlinearer Optimierungsaufgaben}, Wissenschaftliche Zeitschrift der TH Karl-Marx-Stadt, 26, pp. 257-258 (1983).

\bibitem{Patriksson2008}
M. Patriksson, \textit{On the applicability and solution of bilevel optimization models in transportation science: A study on the existence, stability and computation of optimal solutions to stochastic mathematical programs with equilibrium constraints}, Transportation Research Part B: Methodological, 42(10), pp. 843–860 (2008).

\bibitem{PatrikssonWynter1999}
M. Patriksson and L. Wynter, \textit{Stochastic mathematical programs with equilibrium constraints}, Operations Research Letters, 25, pp. 159-167 (1999).

\bibitem{Pflug2000}
G. C. Pflug, \textit{Some Remarks on the Value-at-Risk and the Conditional Value-at-Risk}, \texttt{in} Probabilistic Constrained Opimization - Methodology and Applications, pp. 272-281, Kluwer Academic Publishers, Dordrecht (2000).

\bibitem{Pichler2017}
A. Pichler, \textit{A quantitative comparison of risk measures}, Ann Oper Res, 254, pp. 251-275 (2017).

\bibitem{Robinson1987}
S. M. Robinson, \textit{Local epi-continuity and local optimization}, Math. Programming, 37, pp. 208-222 (1987).

\bibitem{RockafellarUryasev2002}
R. T. Rockafellar and S. Uryasev, \textit{Conditional Value-at-Risk for General Loss Distributions}, Journal of Banking and Finance, 26, pp. 1443-1471 (2002).

\bibitem{RockafellarWets2009}
R. T. Rockafellar, R. J.-B. Wets, \textit{Variational Analysis}, Springer, Berlin (2009).

\bibitem{RoghanianSadjadiAryanezhad2007}
E. Roghanian, S. J. Sadjadi and M.-B. Aryanezhad, \textit{A probabilistic bi-level linear multi-objective programming problem to supply chain planning}, Applied Mathematics and Computation, 188(1), pp. 786-800 (2007).

\bibitem{ScheelScholtes2000}
H. Scheel and S. Scholtes, \textit{Mathematical programs with equilibrium constraints: stationarity, optimality, and sensitivity}, Mathematics of Operations Research, 25, pp. 1 - 22 (2000).
	
\bibitem{Scholtes2001}
S. Scholtes, \textit{Convergence properties of a regularization scheme for mathematical programs with complementarity constraints}, SIAM Journal on Optimization, 11, pp. 918-936 (2001).

\bibitem{DentchevaRuszczynskiShapiro2014}
A. Shapiro, D. Dentcheva and A. Ruszczy\'{n}ski, \textit{Lectures on Stochastic Programming: Modeling and Theory}, MPS SIAM Series on Optimization, 9, SIAM, Philadelphia, 2nd Edition (2014).

\bibitem{Werner2005}
A. Werner, \textit{Bilevel stochastic programming problems: Analysis and application to telecommunications}, PhD thesis, Norwegian University of Science and Technology (2005).
			
\end{thebibliography}
\end{document}